\definecolor{webgreen}{rgb}{0,.5,0}
\definecolor{webbrown}{rgb}{.8,0,0}
\definecolor{emphcolor}{rgb}{0.95,0.95,0.95}
\ifpdf \hypersetup{pdftex,
            pdfstartview=FitH, 
            bookmarksopen=true,
            bookmarksnumbered=true
} \else \hypersetup{dvips} \fi
\numberwithin{equation}{section}
\DeclareMathOperator*{\argmin}{\arg\min}
\DeclareMathOperator*{\argmax}{\arg\max}
\newtheorem{theorem}{Theorem}[section]
\newtheorem{proposition}{Proposition}[section]
\newtheorem{remark}{Remark}[section]
\newtheorem{lemma}{Lemma}[section]
\newtheorem{assumption}{Assumption}[section]
\newtheorem{definition}{Definition}[section]
\newtheorem{problem}[theorem]{Problem}
\newtheorem{condition}[theorem]{Condition}
\DeclareMathOperator*{\rqliminf}{\mathrm{r-quick}-\liminf}
\numberwithin{remark}{section} \numberwithin{proposition}{section}
\numberwithin{corollary}{section}
\newcommand{\E}{\mathbb{E}}  
\newcommand{\F}{\mathcal{F}} 
\newcommand{\Fb}{\mathbb{F}} 
\renewcommand{\P}{\mathbb{P}}  
\newcommand{\R}{\mathbb{R}}  
\newcommand{\Y}{\mathcal{Y}}
\newcommand{\Ec}{\mathcal{E}}  
\newcommand{\M}{\mathcal{M}}
\newcommand{\diff}{{\rm d}}
\title[Asymptotic theory of detection and identification]{Asymptotic theory of sequential detection and identification in the hidden Markov models} 
\author[S. Dayanik]{Savas Dayanik$^\dag$\,}\thanks{$\dag$ \tiny{Bilkent University, Departments of Industrial Engineering and
  Mathematics, Bilkent 06800, Ankara, TURKEY. Email: \mbox{sdayanik@bilkent.edu.tr}}}
\author[K. Yamazaki]{\,Kazutoshi Yamazaki$^\ddag$\,}\thanks{$\ddag$\,\tiny{Department of Mathematics,
Faculty of Engineering Science, Kansai University, 3-3-35 Yamate-cho, Suita-shi, Osaka 564-8680, Japan. Email: \mbox{kyamazak@kansai-u.ac.jp}} }
\begin{document}
\maketitle
\date{\today}

\begin{abstract}
  We consider a unified framework of sequential change-point detection
  and hypothesis testing modeled by means of hidden Markov chains.
  One observes a sequence of random variables whose distributions are
  functionals of a hidden Markov chain. The objective is to detect
  quickly the event that the hidden Markov chain leaves a certain set
  of states, and to identify accurately the class of states into which
  it is absorbed.  We propose computationally tractable sequential
  detection and identification strategies and obtain sufficient
  conditions for the asymptotic optimality in two Bayesian
  formulations.  Numerical examples are provided to confirm the
  asymptotic optimality and to examine the rate of convergence.

%
%
%
%
%
%
%
\end{abstract}

\section{Introduction}
The joint problem of sequential change-point detection and hypothesis
testing is generalized in terms of hidden Markov chains.  One observes
a sequence of random variables whose distributions are functionals of
a hidden Markov chain. The objective is to detect as quickly as
possible the disorder, described by the event that the hidden Markov
chain leaves a certain set of states, and to identify accurately its
cause, represented by the class of states into which the Markov chain
is absorbed.  The problem reduces to solving the trade-off between the
expected detection delay and the false alarm and misdiagnosis
probabilities.  A Bayesian formulation of this hidden Markov model has
been proposed by \cite{Dayanik2009}.  It greatly generalizes the
classical models, encompassing change-point detection, sequential
hypothesis testing as well as their joint problem as in
\cite{Dayanik2006}.

There are mainly two directions of research in the Bayesian
formulation.  One direction is to find the means to calculate an
optimal solution, while the other direction is to design
asymptotically optimal solutions that are easy to calculate and
implement. In the first direction, the problem can typically be
expressed in terms of optimal stopping of the posterior probability
process of each alternative hypothesis.  However, there are only a
very few examples that admit analytical solutions, and in practice one
needs to rely on numerical approximations, for example, via value
iteration in combination with discretization of the space of the
posterior probability process.  The computational burden and
nontrivial computer representation of the optimal solution hinder the
application of the findings of this first direction in practice.  The
second direction pursues a strategy that provides simple and scalable
implementation, but gives only near-optimal solutions.  The asymptotic
optimality as a certain parameter of the problem approaches to an
ideal value is commonly used as a proxy for the near-optimality.

Asymptotically optimal strategies are in most cases derived via the
renewal theory.  In the sequential (multiple) hypothesis testing with
i.i.d.\ observations, the log-likelihood ratio (LLR) processes become
conditionally random walks.  By utilizing the ordinary renewal theory,
one can approximate the asymptotic behaviors of the expected sample
size and the misidentification costs; see, for example,
\cite{Baum_Veeravalli1994}.

On the other hand, when the observed random variables are not i.i.d.\
or when the change-point is not geometrically distributed, the
asymptotic optimality is in general not guaranteed; instead, the
existing literature typically shows that the \emph{$r$-quick convergence} of
\cite{Lai1977} of a certain LLR process is a sufficient condition for
asymptotic optimality. \cite{Dragalin_Tartakovsky_Veeravalli1999}
show, under the assumption on the $r$-quick convergence, the
asymptotic optimality of the multihypothesis sequential probability
ratio test (MSPRT) in the non-i.i.d.\ case of sequential multiple
hypothesis testing. \cite{Dragalin_Tartakovsky_Veeravalli2000} further
obtain higher-order approximations by taking into account the
overshoots at up-crossing times of LLR processes.  As for the
change-point detection, \cite{MR2144868} consider the non-i.i.d.\ case
and show the asymptotic optimality of the Shiryaev procedure under the
$r$-quick convergence.  Its continuous-time version is studied by
\cite{Baron_Tartakovsky2006}.

\cite{Dayanik_2012} obtained asymptotically optimal strategies for the
joint problem of change-point detection and sequential hypothesis
testing, showing that the $r$-quick convergence is again a sufficient
condition for asymptotic optimality.  The hidden Markov model is its
generalization, and to the best of our knowledge, its asymptotic
analysis has not been conducted elsewhere. For a comprehensive account
on both analytical and asymptotic optimality of the change-point
detection and sequential hypothesis testing, we refer the reader to
\cite{MR2966314}.

%

This paper gives an asymptotic analysis of the hidden Markov model and
derives asymptotically optimal strategies, focusing on the following
two Bayesian formulations:
\begin{enumerate}
\item In the \emph{minimum Bayes risk formulation}, one minimizes a
  Bayes risk which is the sum of the expected detection delay time and
  the false alarm and misdiagnosis probabilities.
\item In the \emph{Bayesian fixed-error-probability formulation}, one
  minimizes the expected detection delay time subject to some small
  upper bounds on the false alarm and misdiagnosis probabilities.
\end{enumerate}
The optimal strategy of the former has been derived by
\cite{Dayanik2009}.  The latter is usually solved by means of its
Lagrange relaxation, which turns out to be a minimum Bayes risk
problem where the costs are the Lagrange multipliers (or shadow
prices) of the constraints on the false alarm and misdiagnosis
probabilities.  In theory, by employing a hidden Markov chain of an
arbitrary number of states, one can achieve a wide range of realistic
models.  Unfortunately, however, the implementation is computationally
feasible only for simple cases.  The problem dimension is proportional
to the number of states of the Markov chain, and the computation
complexity increases exponentially fast.  This hinders the
applications of the hidden Markov model; in practice, obtaining exact
optimal strategies are still limited to simple and classical examples.

We propose simple and asymptotically optimal strategies for both the
minimum Bayes risk formulation and the Bayesian
fixed-error-probability formulation.  The asymptotic analysis is
similar for both formulations and can be conducted almost
simultaneously.  Similarly to \cite{Dayanik_2012} and to the
non-i.i.d.\ cases of change-point detection and sequential hypothesis
testing as reviewed above, we show that the r-quick convergence for an
appropriate choice of the LLR processes is a sufficient condition for
asymptotic optimality.  We also show in certain cases that the limit
can be analytically derived in terms of the Kullback-Leibler
divergence, and under some conditions higher-order convergence can be
attained using nonlinear renewal theory, which was pioneered by
\cite{Woodroofe1982} and \cite{MR799155}.  Through a sequence of
numerical experiments, we further acknowledge the convergence results
of the LLR processes and the asymptotic optimality of the proposed
strategies.






The remainder of the paper is organized as follows.  In Section
\ref{section_model}, we define the two Bayesian formulations and
review \cite{Dayanik2009}.  In Section \ref{section_new_strategy}, we
propose our strategies and derive sufficient conditions for
asymptotic optimality in terms of the $r$-quick convergence of the LLR
processes.  In Section \ref{section_convergence_results}, we present
examples where the limits of the LLR processes can be analytically
obtained via the Kullback-Leibler divergence.  Section
\ref{section_numerics} concludes the paper with numerical results.

\section{Problem Formulations} \label{section_model} Consider a
probability space $(\Omega, \F, \P)$ hosting a time-homogeneous Markov
chain $Y = (Y_n)_{n \geq 0}$ with some finite state space
$\mathcal{Y}$, initial state distribution $\eta= \{ \eta(y) \in [0,1],
y \in \mathcal{Y}\}$, and one-step transition matrix $P = \{ P(y, y')
\in [0,1], y, y' \in \mathcal{Y} \}$. Suppose that
$\mathcal{Y}_1,\ldots, \mathcal{Y}_M$ are $M$ closed (but not
necessarily irreducible) mutually disjoint subsets of the state space
$\mathcal{Y}$, and let $\mathcal{Y}_0 := \mathcal{Y} \setminus
\bigcup_{k=1}^M \mathcal{Y}_k$.  In other words, $\Y_0$ is transient
and the Markov chain $Y$ eventually gets absorbed into one of the $M$
closed sets. Let us define
\begin{align*}
  \theta := \min \left\{ t \geq 0: Y_t \notin \mathcal{Y}_0 \right\}
  \quad \textrm{and} \quad \mu := \arg \left\{ 1 \leq j \leq M:
    Y_\theta \in \mathcal{Y}_j \right\}
\end{align*}
as the absorption time and the closed set that absorbs $Y$,
respectively.  Here because $\Y_0$ is transient (i.e.\ $\theta <
\infty$ a.s.), $\mu$ is well-defined. We also define $\M := \left\{
  1,\ldots,M \right\}$ and $\M_0 := \M \cup \{0\}$.

\begin{figure}[t!]
  \centering
  \includegraphics{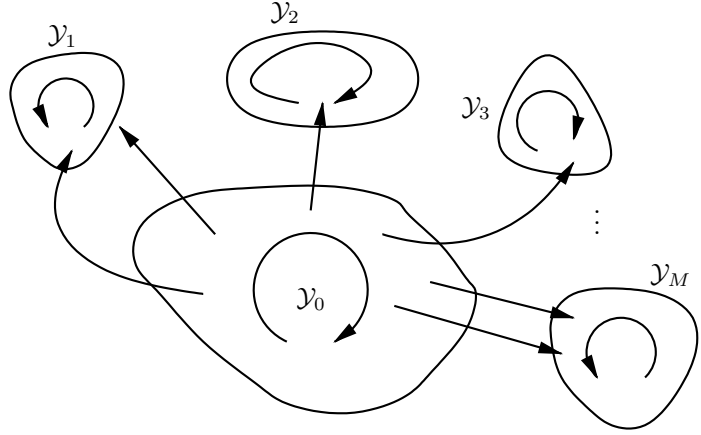}
  \caption{The partition of the state space of the hidden Markov
    model. The problem is to detect the exit time $\theta$ of the
    unobserved $Y$ from $\mathcal{Y}_0$ and identify the index $\mu$
    of the class $\mathcal{Y}_\mu$ into which $Y$ is eventually
    absorbed based only on the observations $X$ modulated by $Y$.}
  \label{hmm}
\end{figure}

The Markov chain $Y$ can be indirectly observed by another stochastic
process $X = (X_n)_{n \geq 1}$ defined on the same probability space
$(\Omega, \F, \P)$.  We assume there exists a set of probability
measures $\left\{ \P (y, \diff x); y \in \Y \right\}$ defined on some
common measurable space $(E,\mathcal{E})$ such that
\begin{align*}
  \P \left\{ Y_0 = y_0, \ldots, Y_t = y_t, X_1 \in E_1, \ldots, X_t
    \in E_t \right\} = \eta (y_0) \prod_{n=1}^t P(y_{n-1},y_n) \P
  (y_n, E_n)
\end{align*}
for every $(y_n)_{0 \leq n \leq t} \in \Y^{t+1}, (E_n)_{1 \leq n \leq
  t} \in \mathcal{E}^t, t \geq 1$.  For every $y \in \Y$, we assume
that $\P (y, \diff x)$ admits a density function $f(y, x)$ with
respect to some $\sigma$-finite measure $m$ on $(E,\Ec)$;
namely,\begin{align*} f (y,x) m (\diff x) = \P (y, \diff x).
\end{align*}

Let $\Fb = (\F_n)_{n \geq 0}$ denote the filtration generated by the
stochastic process $X$; namely,
\begin{align*}
  \F_0 = \{\varnothing, \Omega \} \quad \textrm{and} \quad \F_n =
  \sigma(X_1,\dots,X_n), \quad n \geq 1.
\end{align*}
A (sequential decision) strategy $(\tau,d)$ is a pair of an $\Fb
$-stopping time $\tau$ (in short, $\tau \in \Fb$) and a random
variable $d: \Omega \rightarrow \M$ that is measurable with respect to
the observation history $\F_\tau$ up to the stopping time $\tau$
(namely, $d \in \F_{\tau}$).  Let
\begin{align*}
  \Delta := \left\{ (\tau,d) : \tau \in \Fb \; \text{and} \; d \in
    \F_\tau \; \text{is an $\M$-valued random variable}\right\}
\end{align*}
be the set of strategies.

The objective is to obtain a strategy $(\tau,d)$ so as to minimize the
expected detection delay (EDD)
\begin{align}
  D^{(c,m)}(\tau) := \E \Big[ \Big(\sum_{t=0}^\infty c(Y_t) 1_{\{t <
    \tau\}} \Big)^m \Big]\label{def_edd}
\end{align}for some $m \geq 1$ and deterministic nonnegative and
bounded function $c: \mathcal{Y} \rightarrow [0,\infty)$, as well as
the terminal decision losses (TDL's)
\begin{align}
  R_{yi}(\tau,d) := \P \left\{ d=i, Y_\tau = y, \tau < \infty
  \right\}, \quad i \in \M, \, y \in \Y \setminus
  \Y_i. \label{def_tdc}
\end{align}
The Bayes risk is a linear combination of all of these losses,
\begin{align}
  u^{(c,a,m)}(\tau,d) := D^{(c,m)}(\tau) + \sum_{i \in \mathcal{M}}
  \sum_{y \in \mathcal{Y} \setminus \Y_i} a_{yi} R_{yi}
  (\tau,d) \label{bayes_risk}
\end{align}
for some $m \geq 1$, $c$, and a set of strictly positive constants $a=
(a_{yi})_{i \in \M, y \in \Y \setminus \Y_i }$.  In \eqref{def_edd},
while it is natural to assume $c(y)=0$ for $y \in \Y_0$, we allow
$c(y)$ to take any nonnegative values for $y \in \mathcal{Y}_0$.  On
the other hand, in \eqref{def_tdc} and \eqref{bayes_risk}, we assume
that any correct terminal decision (i.e., $\{d=i, Y_\tau \in \Y_i,
\tau < \infty \}$) is not penalized because otherwise the terminal
decision loss \eqref{def_tdc} cannot be bounded by small numbers and
Problem \ref{problem_invariant} below does not make sense.
  

\begin{problem}[Minimum Bayes risk
  formulation] \label{problem_bayes_risk} Fix $m \geq 1$, $c$, and a
  set of strictly positive constants $a= (a_{yi})_{i \in \M, y \in \Y
    \setminus \Y_i }$, we want to calculate the minimum Bayes risk
  \begin{align*}
    \inf_{(\tau,d) \in \Delta} u^{(c,a,m)}(\tau,d)
  \end{align*}
  and find a strategy $(\tau^*,d^*)$ that attains it, if such a
  strategy exists.
\end{problem}

\begin{problem}[Bayesian fixed-error probability
  formulation] \label{problem_invariant} Fix $m\geq 1$, $c$, and a set
  of strictly positive constants $\overline{R} =
  (\overline{R}_{yi})_{i \in \M, y \in \mathcal{Y} \setminus \Y_i}$,
  we want to calculate the minimum EDD
  \begin{align*}
    \inf_{(\tau,d) \in \Delta(\overline{R})} D^{(c,m)}(\tau)
  \end{align*}
  where
  \begin{align*}
    \Delta(\overline{R}) := \left\{ (\tau,d) \in \Delta:
      R_{yi}(\tau,d) \leq \overline{R}_{yi}, \; i \in \M, y \in \Y
      \setminus \Y_i \right\},
  \end{align*}
  and find a strategy $(\tau^*,d^*)\in \Delta (\overline{R})$ that
  attains it, if such a strategy exists.
\end{problem}

For every $i \in \M$, define
\begin{align*}
  \widetilde{R}_{ji}(\tau,d) := \sum_{y \in \mathcal{Y}_j}
  R_{yi}(\tau,d) = \left\{ \begin{array}{ll} \P \left\{ d=i, \tau <
        \theta \right\} & j = 0, \\ \P \left\{ d=i, \mu =j, \theta
        \leq \tau < \infty \right\}, & j \in \M \setminus
      \{i\}. \end{array} \right.
\end{align*}

\begin{remark} \label{remark_set} Fix a set of positive constants
  $\overline{R}$. We have
  \begin{align*}
    \Delta(\overline{R}) &\subset \Big\{ (\tau,d) \in \Delta:
    \widetilde{R}_{ji}(\tau,d) \leq
    \sum_{y \in \Y_j }\overline{R}_{yi}, \; i \in \M, j \in \M_0 \setminus \{ i \} \Big\}  =: \overline{\Delta} (\overline{R}),\\
    \Delta(\overline{R}) &\supset \Big\{ (\tau,d) \in \Delta:
    \widetilde{R}_{ji}(\tau,d) \leq \min_{y \in \Y_j
    }\overline{R}_{yi}, \; i \in \M, j \in \M_0 \setminus \{ i \}
    \Big\} =: \underline{\Delta} (\overline{R}).
  \end{align*}
\end{remark}

In our analysis, we will need to reformulate the problem in terms of
the conditional probabilities
\begin{align*}
  \P_i \left\{X_1 \in E_1,...,X_n \in E_n\right\} &:= \P \left\{\left.
      X_1 \in E_1,...,X_n \in E_n \right| \mu = i\right\},   \\
  \P_i^{(t)} \left\{X_1 \in E_1,...,X_n \in E_n\right\} &:= \P
  \left\{\left. X_1 \in E_1,...,X_n \in E_n \right| \mu = i, \theta =
    t\right\}, \quad t \geq 0,
\end{align*}
defined for every $i \in \M$, $n \geq 1$ and $(E_1 \times \cdots
\times E_n) \in \Ec^n$. Let $\E_i$ and $\E_i^{(t)}$ be the
expectations with respect to $\P_i$ and $\P_i^{(t)}$, respectively. We
also let the unconditional probability that $Y$ is absorbed by
$\mathcal{Y}_i$ be
\begin{align*}
  \nu_i &:= \P \left\{ \mu = i\right\}, \quad i \in \M.
\end{align*}
Because $\Y_0$ is transient, we must have $\sum_{i \in \M} \nu_i =1$.
Without loss of generality, we can assume $\nu_i > 0$ for any $i \in
\M$ because otherwise we can disregard $\Y_i$ and consider the Markov
chain on $\Y\setminus \Y_i$.

In terms of those conditional probabilities, we have $D^{(c,m)}(\tau)
= \sum_{i \in \M} \nu_i D_i^{(c,m)}(\tau)$, where
\begin{align*}
  D_i^{(c,m)}(\tau) := \E_i \Big[\Big(\sum_{t=0}^\infty c(Y_t) 1_{\{t
    < \tau\}} \Big)^m \Big], \quad i \in \M, \; (\tau,d) \in \Delta.
\end{align*}
We decompose the Bayes risk such that
\begin{align*}
  u^{(c,a,m)}(\tau,d) = \sum_{i \in \M} \nu_i u_i^{(c,a,m)}(\tau,d)
\end{align*}
where
\begin{align}
  u_i^{(c,a,m)}(\tau,d) &:= D_i^{(c,m)} (\tau) + R_i^{(a)} (\tau,d), \label{bayes_risk_i} \\
  R_i^{(a)} (\tau,d) &:= \frac 1 {\nu_i} \sum_{y \in \mathcal{Y}
    \setminus \Y_i} a_{yi} R_{yi} (\tau,d) \label{r_i_a}
\end{align}
for every $(\tau,d) \in \Delta$. In particular, with $a_{yi} = 1$ for
all $y \in \Y \backslash \Y_i$,
\begin{align}
  R_i^{(1)} (\tau,d) := \frac 1 {\nu_i} \sum_{y \in \mathcal{Y}
    \setminus \Y_i} R_{yi} (\tau,d) = \frac 1 {\nu_i} \sum_{j \in \M_0
    \setminus \{i\}} \widetilde{R}_{ji} (\tau,d), \quad (\tau,d) \in
  \Delta. \label{r_i_1_temp}
\end{align}

\section{Asymptotically Optimal Strategies} \label{section_new_strategy}
We now introduce two strategies. 
The first strategy triggers an alarm when the posterior probability of the event that $Y$ has been absorbed by a certain closed set  exceeds some threshold for the first time, and will be later proposed as an asymptotically optimal solution for Problem \ref{problem_bayes_risk}. The second strategy is its variant expressed in terms of the log-likelihood ratio (LLR) processes and will be proposed as an asymptotically optimal solution for Problem \ref{problem_invariant}. 

For all $y \in \Y$, let $(\Pi_n(y))_{n \geq 0}$ be the
posterior probability process defined by
\begin{align*}
  \Pi_n(y) :=  \P \left\{ \left. Y_n = y \right| \F_n \right\}, \quad y \in \Y.
\end{align*}
Then $\Pi_0(y) = \eta(y)$, $y \in \Y$, and  for $n \geq 1$
\begin{align*}
\Pi_n(y) = \frac {\alpha_n(X_1,\ldots,X_n,y)} {\sum_{y' \in \mathcal{Y}}\alpha_n(X_1,\ldots,X_n,y')}
\end{align*}
where
\begin{align}
\alpha_n (x_1,\ldots, x_n,y) := \sum_{(y_0,\ldots, y_{n-1}) \in \mathcal{Y}^n} \left( \eta(y_0) \prod_{k=1}^{n-1} P(y_{k-1},y_k) f(y_k,x_k)\right) P(y_{n-1},y) f(y,x_n); \label{def_alpha}
\end{align}
see \cite{Dayanik2009}.   Also define
\begin{align*}
\widetilde{\Pi}_n^{(i)} := \P \left\{ \left. Y_n \in \Y_i \right| \F_n \right\} = \left\{ \begin{array}{ll} \P \left\{ \left. \theta > n \right| \F_n \right\}, & i=0 \\ \P \left\{ \left. \theta
      \leq n, \mu=i \right| \F_n \right\}, & i \in \M \end{array}\right\}.
\end{align*}
Then $\widetilde{\Pi}_0^{(i)} = \sum_{y \in \Y_i} \eta(y)$, $i \in \M_0$, and for $n \geq 1$
\begin{align*}
\widetilde{\Pi}_n^{(i)} = \sum_{y \in \mathcal{Y}_i}\Pi_n(y) = \frac {\widetilde{\alpha}_n^{(i)}(X_1,\ldots,X_n) } {\sum_{j \in \M_0}\widetilde{\alpha}_n^{(j)}(X_1,\ldots,X_n) },
\end{align*}
where
\begin{align}
\widetilde{\alpha}_n^{(i)}(x_1,\ldots,x_n) := \sum_{y \in \mathcal{Y}_i} \alpha_n(x_1,\ldots,x_n,y), \quad i \in \M_0, \, (x_1,\ldots, x_n) \in E^n. \label{def_alpha_sum}
\end{align}
For the rest of the paper, we use the short-hand notations: $\widetilde{\alpha}_n^{(i)}:= \widetilde{\alpha}_n^{(i)}(X_1,\ldots,X_n)$ for any $n \geq 1$ and $i \in \M_0$.

\begin{assumption} \label{finiteness_ratio}
For every $y,z \in \Y$, we assume $0 < f(y,X_1)/f(z,X_1) < \infty$ a.s.
This implies that $0< \widetilde{\Pi}_n^{(i)} <
  1$ a.s.\ for every finite $n \geq 1$ and $i\in \M$.
\end{assumption}

Let $\Lambda(i,j) = \left( \Lambda_n(i,j) \right)_{n \geq 1}$ be the LLR processes;
\begin{align}
  \Lambda_n(i,j) := \log  \frac {\widetilde{\Pi}_n^{(i)}}{\widetilde{\Pi}_n^{(j)}} = \log   \frac {\widetilde{\alpha}_n^{(i)}} {\widetilde{\alpha}_n^{(j)}}, \quad n \geq 1, \; i \in \M, \; j \in \M_0\setminus\{i\}. \label{eq:log-likelihood-ratio-processes}
\end{align}
%

\begin{definition}[$(\tau_A,d_A)$-strategy for the minimum Bayes risk formulation]
Fix a set of strictly positive constants $A = (A_i)_{i \in \M}$, define strategy $(\tau_A,d_A)$ by
\begin{align}
 \tau_A = \min_{i \in \M}\; \tau_A^{(i)} \quad \textrm{and} \quad d_A  \in \argmax_{i \in \M} \widetilde{\Pi}_{\tau_A}^{(i)}, \label{def_strategy_a}
\end{align}
where
\begin{align}
\tau_A^{(i)} := \inf \left\{ n \geq 1: \widetilde{\Pi}_n^{(i)} > \frac 1 {1+A_i}\right\}, \quad i \in \M.  \label{def_tau_a_i}
\end{align}  
\end{definition}

Define the logarithm of the odds-ratio process 
\begin{align}
\Phi_n^{(i)} := \log \frac {\widetilde{\Pi}_n^{(i)}} {1-\widetilde{\Pi}_n^{(i)}}= - \log \Big[ \sum_{j \in \M_0 \setminus \{i\}} \exp \left(-\Lambda_n(i,j)
  \right) \Big] = \log \frac {\widetilde{\alpha}_n^{(i)}} {\sum_{j \in \M_0 \setminus \{i\}}\widetilde{\alpha}_n^{(j)}}\label{identity_phi}, \quad i \in \M, \; n \geq 1. 
\end{align}
Then, (\ref{def_tau_a_i}) can be rewritten as
\begin{align*}
  \tau_A^{(i)} = \inf \left\{ n \geq 1 : \frac
    {1-\widetilde{\Pi}_n^{(i)}}{\widetilde{\Pi}_n^{(i)}} < A_i \right\} =  \inf \left\{n \geq 1: \Phi_n^{(i)} > - \log A_i \right\}, \quad i \in \M.
\end{align*}
\begin{definition}[$(\upsilon_B,d_B)$-strategy for the Bayesian fixed-error-probability formulation]
Fix a set of strictly positive constants $B = (B_{ij})_{i \in \M, \; j \in \M_0 \setminus \{i\}}$, define
\begin{align*}
\upsilon_B := \min_{i \in \M} \upsilon_B^{(i)} \quad \textrm{and} \quad d_B \in \arg \min_{i \in \M} \upsilon_B^{(i)}
\end{align*}
where
\begin{align}
\upsilon_B^{(i)} := \inf \left\{ n \geq 1: \Lambda_n(i,j) > - \log B_{ij}  \textrm{ for every $j \in \M_0 \setminus \{i\}$}\right\}, \quad i \in \M. \label{definition_upsilon}
\end{align}
\end{definition}

Fix $i \in \M$. Define
\begin{align*}
\overline{B}_i := \max_{j \in \M_0 \setminus \{i\}} B_{ij} \quad \textrm{and} \quad \underline{B}_i := \min_{j \in \M_0 \setminus \{i\}} B_{ij},
\end{align*}
and the minimum of the LLR processes,
\begin{align*}
\Psi^{(i)}_n := \min_{j \in \M_0 \setminus \{i\}} \Lambda_n(i,j), \quad n \geq 1. 
\end{align*}
Then we have 
\begin{align*}
\underline{\upsilon}_B^{(i)} \leq \upsilon_B^{(i)} \leq \overline{\upsilon}_B^{(i)}, 
\end{align*}
 where
\begin{align*}
\underline{\upsilon}_B^{(i)} &:= \inf \left\{ n \geq 1: \Psi^{(i)}_n > - \log \overline{B}_{i} \right\},  \\
\overline{\upsilon}_B^{(i)} &:= \inf \left\{ n \geq 1: \Psi^{(i)}_n > - \log \underline{B}_{i} \right\}. 
\end{align*}
Notice by (\ref{identity_phi}) that $\Phi_n^{(i)} \leq \Lambda_n(i,j)$ for every $n \geq 1$ and $j \in \M_0 \setminus \{i\}$, and hence
\begin{align}
\Psi^{(i)}_n \geq \Phi_n^{(i)}, \quad n \geq 1. \label{psi_greater_than_phi}
\end{align}

We will show that, by adjusting the values of $A$ and $B$, the strategy $(\tau_A,d_A)$ is asymptotically optimal in Problem \ref{problem_bayes_risk} as  
\begin{align*}
\|c\| := \max_{y \in \Y} c (y) \downarrow 0
\end{align*}
for fixed $a$, and the strategy $(\upsilon_B,d_B)$ is asymptotically optimal in Problem \ref{problem_invariant} as 
\begin{align*}
\|\overline{R}\| := \max_{i \in \M, \; y \in \Y \setminus \Y_i } \overline{R}_{yi} \downarrow 0
\end{align*}  
for fixed $c$.
For the latter, we assume that, in taking limits, $\overline{R}_i := (\overline{R}_{yi})_{y \in \Y \backslash \Y_i}$ satisfy
\begin{align}
\frac {\min_{y \in \Y \setminus \Y_i} \overline{R}_{yi}} {\max_{y \in \Y \setminus \Y_i} \overline{R}_{yi}} > \beta_i, \quad i \in \M, \label{bounds_r_ratio}
\end{align}
for some strictly positive constants $(\beta_i)_{i \in \M}$.  This limit mode will still be denoted by ``$\|\overline{R}\|\downarrow 0$''
for brevity.


We will find functions $A(c)$ and $B(\overline{R})$ so that 
\begin{align}
u^{(c,a,m)}(\tau_{A(c)},d_{A(c)}) &\sim \inf_{(\tau,d) \in \Delta}
u^{(c,a,m)}(\tau,d) \quad \textrm{as }  \| c \| \downarrow 0, \label{optimality_bayes_risk} \\
D^{(c,m)}(\upsilon_{B(\overline{R})}) &\sim \inf_{(\tau,d) \in \Delta (\overline{R})} D^{(c,m)} (\tau) \quad \textrm{as }  \| \overline{R} \| \downarrow 0, \label{optimality_invariant}
\end{align}
where
\begin{align*}
x_{\gamma} \sim y_\gamma \textrm{ as } \gamma \rightarrow \gamma_0 \Longleftrightarrow \lim_{\gamma \rightarrow \gamma_0} \frac {x_\gamma} {y_\gamma} = 1.
\end{align*}
In fact, we will obtain results stronger than (\ref{optimality_bayes_risk}) and (\ref{optimality_invariant}); we will show
\begin{align}
u_i^{(c,a,m)}(\tau_{A(c)},d_{A(c)}) &\sim \inf_{(\tau,d) \in \Delta}
u_i^{(c,a,m)}(\tau,d) \quad \textrm{as } \|c\| \downarrow 0, \label{optimality_bayes_risk_strong} \\
D^{(c,m)}_i(\upsilon_{B(\overline{R})}) &\sim \inf_{(\tau,d) \in \Delta (\overline{R})} D^{(c,m)}_i (\tau) \quad \textrm{as }  \| \overline{R} \| \downarrow 0, \label{optimality_invariant_strong}
\end{align}
for every $i \in \M$.

\subsection{Convergence of terminal decision losses and detection delay}

As $c$ and $\overline{R}$ decrease in Problems \ref{problem_bayes_risk} and \ref{problem_invariant}, respectively, the optimal stopping regions shrink and one should expect to wait longer. In Problem \ref{problem_bayes_risk}, when the unit sampling cost is small, one should take advantage of it and sample more.   In Problem \ref{problem_invariant}, when the upper bounds on the TDL's are small, one expects to wait longer to collect more information in order to satisfy the constraints. 
Moreover, the size of the stopping regions for $(\tau_A,d_A)$ and $(\upsilon_B,d_B)$ decrease monotonically as $A$ and $B$ decrease. Therefore, functions $A(c)$ and $B(\overline{R})$ should be monotonically decreasing as $c$ and $\overline{R}$ decrease, respectively. We explore the asymptotic behaviors of the EDD and the TDL as $A \downarrow 0$ and $B \downarrow 0$.

Define
\begin{align*}
\| A \| := \max_{i \in \M} A_i \quad \textrm{and} \quad \| B \| := \max_{i \in \M, \, j \in \M_0 \setminus \{i\}} B_{ij}. 
\end{align*}
 Moreover, assume, while taking limits $\|B\| \downarrow 0$, that the ratio $\underline{B}_i/\overline{B}_i$ for every $i \in \M$ is bounded from below by some strictly positive number so that it is consistent with how $\|\overline{R}\|$ decreases to $0$ as we assumed in (\ref{bounds_r_ratio}). 

We first obtain bounds on the TDL's that are shown to converge to zero in the limit.
The LLR processes can be used as Radon-Nikodym derivatives to change measures as the following lemma shows.  The proof is the same as Lemma 2.3 of  \cite{Dayanik_2012}, and hence we omit it.
\begin{lemma}[Changing Measures] 
  \label{lemma_changing_measure} 
  Fix $i \in \M$, an $\Fb $-stopping time $\tau$, and an
  $\F_\tau$-measurable event $F$.  We have
  \begin{align*}
    \P \left(F \cap \left\{\mu =j, \theta \leq \tau < \infty\right\}\right) &=
    \nu_i\, \E_i \left[ 1_{F \cap \{\theta \leq \tau <
        \infty\}} e^{-\Lambda_\tau(i,j)}\right], \quad j \in \M\setminus\{i\}, \\
 \P \left( F \cap \left\{ \tau < \theta
      \right\}\right) &= \nu_i\, \E_i \left[ 1_{F \cap \left\{ \theta \leq
          \tau < \infty \right\}} e^{-\Lambda_\tau(i,0)}\right].
  \end{align*}
\end{lemma}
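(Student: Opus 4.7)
The plan is to reduce both identities to a single computation, using the tower property with respect to $\F_\tau$ together with the defining identities $\widetilde{\Pi}_n^{(k)} = \P\{Y_n \in \Y_k \mid \F_n\}$ and $\Lambda_\tau(i,j) = \log(\widetilde{\Pi}_\tau^{(i)}/\widetilde{\Pi}_\tau^{(j)})$.

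First, I would unwind the conditional measure $\P_i$ on the right-hand side. Since $\P_i(\,\cdot\,) = \P(\,\cdot\, \mid \mu = i)$, we have $\nu_i\, \E_i[Z] = \E[1_{\{\mu=i\}}\, Z]$, so the right-hand side of the first identity becomes $\E[\,1_F\, 1_{\{\mu = i,\, \theta \leq \tau < \infty\}}\, e^{-\Lambda_\tau(i,j)}\,]$. Next, I would exploit the closedness of $\Y_i$: on $\{\tau < \infty\}$, the event $\{\mu = i,\, \theta \leq \tau\}$ coincides with $\{Y_\tau \in \Y_i\}$, since once $Y$ enters $\Y_i$ it stays, and conversely $Y_\tau \in \Y_i$ with $\Y_i \cap \Y_0 = \varnothing$ and $\Y_i$ closed forces $\mu = i$ and $\theta \leq \tau$. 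Hence the expression becomes $\E[\,1_F\, 1_{\{Y_\tau \in \Y_i,\, \tau < \infty\}}\, e^{-\Lambda_\tau(i,j)}\,]$.

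Second, I would apply the tower property at $\F_\tau$. Since $F$, $\{\tau < \infty\}$, and $\Lambda_\tau(i,j)$ are all $\F_\tau$-measurable, this equals $\E\bigl[\,1_F\, 1_{\{\tau < \infty\}}\, e^{-\Lambda_\tau(i,j)}\, \E[\,1_{\{Y_\tau \in \Y_i\}}\, 1_{\{\tau<\infty\}} \mid \F_\tau\,]\,\bigr]$. The inner conditional expectation equals $\widetilde{\Pi}_\tau^{(i)}\, 1_{\{\tau<\infty\}}$, which I would verify by the standard partition argument $\{\tau<\infty\} = \bigsqcup_{n\geq 1}\{\tau = n\}$ and applying $\widetilde{\Pi}_n^{(i)} = \P\{Y_n \in \Y_i \mid \F_n\}$ on each $\{\tau=n\} \in \F_n \cap \F_\tau$. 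The definition of $\Lambda_\tau(i,j)$ then gives the algebraic identity $e^{-\Lambda_\tau(i,j)}\,\widetilde{\Pi}_\tau^{(i)} = \widetilde{\Pi}_\tau^{(j)}$, which is well-defined a.s.\ on $\{\tau<\infty\}$ by Assumption \ref{finiteness_ratio}. Reversing the tower step, the expression becomes $\E[\,1_F\, 1_{\{Y_\tau \in \Y_j,\, \tau < \infty\}}\,] = \P(F \cap \{Y_\tau \in \Y_j,\, \tau < \infty\})$. For $j \in \M\setminus\{i\}$, closedness of $\Y_j$ rewrites this as $\P(F \cap \{\mu = j,\, \theta \leq \tau < \infty\})$, proving the first identity; for $j = 0$, $\{Y_\tau \in \Y_0,\,\tau<\infty\} = \{\tau < \theta\}$ (using $\theta < \infty$ a.s.), proving the second.

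The main subtlety is not conceptual but bookkeeping: the stopping time $\tau$ can take the value $\infty$, so one must carry the factor $1_{\{\tau<\infty\}}$ throughout and carefully justify $\E[\,1_{\{Y_\tau \in \Y_k\}}\, 1_{\{\tau<\infty\}} \mid \F_\tau\,] = \widetilde{\Pi}_\tau^{(k)}\, 1_{\{\tau<\infty\}}$ via the partition argument above. Everything else is a direct application of the tower property and the algebraic definition of the log-likelihood ratio, which is why the authors refer the reader to the identical proof in \cite{Dayanik_2012}.
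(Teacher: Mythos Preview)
Your argument is correct and is precisely the standard route: rewrite $\nu_i\,\E_i[\cdot]$ as $\E[1_{\{\mu=i\}}\cdot]$, identify $\{\mu=i,\theta\le\tau<\infty\}=\{Y_\tau\in\Y_i,\tau<\infty\}$ via closedness of $\Y_i$, condition on $\F_\tau$ to produce $\widetilde{\Pi}_\tau^{(i)}$, and then use $e^{-\Lambda_\tau(i,j)}\widetilde{\Pi}_\tau^{(i)}=\widetilde{\Pi}_\tau^{(j)}$ to swap $i$ for $j$ before undoing the conditioning. The paper omits the proof and refers to Lemma~2.3 of \cite{Dayanik_2012}, whose argument is exactly this one, so your proposal coincides with the intended proof.
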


The next proposition can be obtained by setting $F:= \{d=i\} \in \F_{\tau}$ in Lemma \ref{lemma_changing_measure}.
\begin{proposition}\label{corollary_measure_change} For every strategy $(\tau,d) \in \Delta$, we have
\begin{align*}
    \widetilde{R}_{ji}(\tau,d) &= \nu_i\, \E_i \left[ 1_{\{d=i, \; \theta \leq
        \tau < \infty\}} e^{-\Lambda_\tau(i,j)}\right], \quad i \in \M, \; j \in
   \M_0\setminus \{i\}.
\end{align*}
\end{proposition}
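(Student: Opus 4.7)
The proof follows by setting $F := \{d=i\} \in \F_\tau$ in Lemma \ref{lemma_changing_measure}, exactly as the sentence preceding the proposition already anticipates. The plan is thus essentially a one-line reduction, and the only thing to verify is that the hypotheses of Lemma \ref{lemma_changing_measure} are met for this choice of $F$.

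First I would check admissibility of $F$: by the definition of the strategy set $\Delta$, the terminal decision rule $d:\Omega \to \M$ is $\F_\tau$-measurable, so for every $i \in \M$ the event $\{d=i\}$ lies in $\F_\tau$ and is a legitimate input to Lemma \ref{lemma_changing_measure}. Next I would invoke the two conclusions of Lemma \ref{lemma_changing_measure} in parallel with this $F$. For $j \in \M \setminus \{i\}$, unwinding the definition gives
\[
\widetilde{R}_{ji}(\tau,d) = \P\{d=i,\, \mu=j,\, \theta \leq \tau < \infty\} = \P\bigl(F \cap \{\mu=j,\, \theta \leq \tau < \infty\}\bigr),
\]
and the first identity of Lemma \ref{lemma_changing_measure} converts this directly into $\nu_i\, \E_i[\I{d=i,\, \theta \leq \tau < \infty}\, e^{-\Lambda_\tau(i,j)}]$. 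For $j=0$, we have $\widetilde{R}_{0i}(\tau,d) = \P\{d=i,\,\tau < \theta\} = \P(F \cap \{\tau < \theta\})$, and the second identity of Lemma \ref{lemma_changing_measure} converts this into $\nu_i\, \E_i[\I{d=i,\, \theta \leq \tau < \infty}\, e^{-\Lambda_\tau(i,0)}]$.

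Combining the two cases produces a single formula valid for all $j \in \M_0 \setminus \{i\}$, which is precisely the claim of the proposition. There is no substantive obstacle: the entire analytical content lies in Lemma \ref{lemma_changing_measure}, and the only bookkeeping needed is the observation that $\{d=i\} \in \F_\tau$, which is built into the definition of a strategy.
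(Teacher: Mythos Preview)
Your proposal is correct and follows exactly the same route as the paper, which itself simply notes that the proposition is obtained by setting $F := \{d=i\} \in \F_\tau$ in Lemma \ref{lemma_changing_measure}. The only additional content you provide is the explicit unpacking of the two cases $j \in \M\setminus\{i\}$ and $j=0$, which is routine bookkeeping.
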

In particular, \eqref{r_i_1_temp} can be rewritten 
\begin{align}
R_i^{(1)}(\tau,d) &= \E_i \Big[ 1_{\left\{ d=i, \; \theta
          \leq \tau < \infty\right\}} \sum_{j \in \M_0 \setminus \{i\}} e^{-\Lambda_{\tau} (i,j)} \Big], \quad i \in \M, \; (\tau,d) \in \Delta. \label{r_i_1}
\end{align}

\begin{remark} \label{remark_bounds_tdl}
Fix $i \in \M$. Let 
\begin{align*}
\overline{a}_i := \max_{y \in \Y \setminus \Y_i} a_{yi}.
\end{align*}
By \eqref{r_i_a}, \eqref{r_i_1_temp} and \eqref{r_i_1}, 
\begin{align*}
R_i^{(a)} (\tau,d) \leq \frac 1 {\nu_i} \sum_{j \in \M_0 \setminus \{i\}} (\max_{y \in \Y_j} a_{yi}) \widetilde{R}_{ji} (\tau,d) \leq \overline{a}_i R_{i}^{(1)} (\tau,d) \leq \overline{a}_i \E_i \Big[ 1_{\left\{ d=i, \; \theta
          \leq \tau < \infty\right\}} \sum_{j \in \M_0 \setminus \{i\}} e^{-\Lambda_{\tau} (i,j)} \Big].
\end{align*}
\end{remark}

With this remark, we attain a slight modification of Proposition 3.4 of \cite{Dayanik_2012}.
\begin{proposition}[Bounds on the TDL] \label{corollary_bound_kappa_i_and_r} We can obtain the following bounds on the TDL's.
\begin{itemize}
\item[(i)] For every fixed $A = (A_i)_{i \in \M}$ and $a = (a_{yi})_{i \in \M, y \in \mathcal{Y} \setminus \Y_i}$, we have
\begin{align*}
R_i^{(a)}(\tau_A,d_A) &\leq \overline{a}_i A_i, \quad i \in \M. 
\end{align*}
\item[(ii)] For every $B = (B_{ij})_{i \in \M, j \in \M \setminus\{i\}}$, we have
\begin{align*}
\widetilde{R}_{ji} (\upsilon_B, d_B) &\leq \nu_i B_{ij}, \quad i \in \M, \, j \in \M_0 \setminus\{i\}.
\end{align*}
\end{itemize}
\end{proposition}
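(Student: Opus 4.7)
The plan is to push both bounds through the change-of-measure identities of Proposition \ref{corollary_measure_change} and Remark \ref{remark_bounds_tdl}, and then to read off the necessary bound on $e^{-\Lambda_\tau(i,j)}$ on the decision event $\{d=i\}$ directly from the definition of the stopping rule.

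For part (ii), Proposition \ref{corollary_measure_change} immediately gives
\begin{equation*}
\widetilde{R}_{ji}(\upsilon_B, d_B) = \nu_i\, \E_i\bigl[ 1_{\{d_B = i,\, \theta \le \upsilon_B < \infty\}}\, e^{-\Lambda_{\upsilon_B}(i,j)} \bigr].
\end{equation*}
The tie-breaking rule $d_B \in \arg\min_k \upsilon_B^{(k)}$ forces $\upsilon_B = \upsilon_B^{(i)}$ on $\{d_B = i\}$, so by \eqref{definition_upsilon} we have $\Lambda_{\upsilon_B}(i,j) > -\log B_{ij}$, equivalently $e^{-\Lambda_{\upsilon_B}(i,j)} < B_{ij}$, for every $j \in \M_0 \setminus \{i\}$. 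Pulling $B_{ij}$ out of the expectation and dropping the remaining probability yields the claim.

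For part (i), Remark \ref{remark_bounds_tdl} together with the identity $\sum_{j \in \M_0 \setminus \{i\}} e^{-\Lambda_n(i,j)} = (1-\widetilde{\Pi}_n^{(i)})/\widetilde{\Pi}_n^{(i)}$ (read off from \eqref{identity_phi}) gives
\begin{equation*}
R_i^{(a)}(\tau_A, d_A) \le \overline{a}_i\, \E_i \Bigl[ 1_{\{d_A = i,\, \theta \le \tau_A < \infty\}}\, \frac{1-\widetilde{\Pi}_{\tau_A}^{(i)}}{\widetilde{\Pi}_{\tau_A}^{(i)}} \Bigr].
\end{equation*}
The proof then reduces to establishing $\widetilde{\Pi}_{\tau_A}^{(i)} > 1/(1+A_i)$ on $\{d_A = i\}$, i.e.\ that on this event $\tau_A$ coincides with $\tau_A^{(i)}$; the integrand is then strictly less than $A_i$ and the bound drops out.

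The main subtlety is precisely this coincidence of $\tau_A$ and $\tau_A^{(i)}$ on $\{d_A = i\}$. At time $\tau_A$ there is always at least one triggering index $k^\ast$ with $\widetilde{\Pi}_{\tau_A}^{(k^\ast)} > 1/(1+A_{k^\ast})$; since the probabilities $(\widetilde{\Pi}_{\tau_A}^{(k)})_{k \in \M_0}$ sum to $1$, at most one of them can exceed $1/2$. Thus as soon as $A_{k^\ast} \le 1$, the triggering $\widetilde{\Pi}_{\tau_A}^{(k^\ast)}$ is the unique argmax over $\M$, so the tie-breaking rule forces $d_A = k^\ast$; on $\{d_A = i\}$ this means $k^\ast = i$ and therefore $\tau_A = \tau_A^{(i)}$. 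Since the paper's asymptotic regime is $\|A\|\downarrow 0$, assuming $A_i \le 1$ for all $i$ is harmless, and the bound $\overline{a}_i A_i$ follows.
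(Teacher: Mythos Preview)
Your argument is correct and matches the paper's approach, which simply cites Proposition 3.4 of \cite{Dayanik_2012}; the mechanism is exactly the change-of-measure identities of Proposition \ref{corollary_measure_change} and Remark \ref{remark_bounds_tdl} followed by the pointwise bound on the integrand forced by the stopping rule. Your observation that $\tau_A = \tau_A^{(i)}$ on $\{d_A = i\}$ requires $A_k \le 1$ is a genuine subtlety that the paper glosses over (it later uses this identity without comment in Section \ref{section_higher_approximation}); your justification via ``at most one posterior can exceed $1/2$'' is the right way to secure it, and restricting to $\|A\| \le 1$ is indeed harmless for the asymptotic analysis.
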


Using the bounds in Proposition \ref{corollary_bound_kappa_i_and_r} and Remark \ref{remark_set}, we can obtain feasible strategies by choosing the values of $A$ and $B$ accordingly.

\begin{proposition}[Feasible Strategies] \label{proposition_feasible_strategy}
Fix a set of strictly positive constants $\overline{R} = (R_{yi})_{i \in \M, y \in \Y \setminus \Y_i}$. If $B_{ij} (\overline{R}) \leq  {\min_{y \in \Y_j} \overline{R}_{yi}/\nu_i} $ for every $i \in \M$ and  $j \in \M_0 \setminus \{i\}$, then $(\upsilon_{B(\overline{R})}, d_{B(\overline{R})}) \in \Delta (\overline{R})$.
\end{proposition}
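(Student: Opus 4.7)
The proof plan is a direct combination of the already-available bound on the marginal TDL's from Proposition \ref{corollary_bound_kappa_i_and_r}(ii) with the set-inclusion from Remark \ref{remark_set}. Concretely, I would proceed in three short steps.

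First, I invoke Proposition \ref{corollary_bound_kappa_i_and_r}(ii) with the chosen threshold vector $B(\overline{R})$ to obtain, for every $i \in \M$ and every $j \in \M_0 \setminus \{i\}$,
\begin{align*}
\widetilde{R}_{ji}\bigl(\upsilon_{B(\overline{R})},\, d_{B(\overline{R})}\bigr) \leq \nu_i\, B_{ij}(\overline{R}).
\end{align*}

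Second, I plug in the hypothesis $B_{ij}(\overline{R}) \leq \min_{y \in \Y_j} \overline{R}_{yi}/\nu_i$ to cancel the $\nu_i$ and conclude
\begin{align*}
\widetilde{R}_{ji}\bigl(\upsilon_{B(\overline{R})},\, d_{B(\overline{R})}\bigr) \leq \min_{y \in \Y_j} \overline{R}_{yi}, \quad i \in \M, \; j \in \M_0 \setminus \{i\}.
\end{align*}

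Third, I recognize the right-hand side of the previous display as exactly the defining inequality of the smaller feasible set $\underline{\Delta}(\overline{R})$ introduced in Remark \ref{remark_set}. Since Remark \ref{remark_set} asserts $\underline{\Delta}(\overline{R}) \subset \Delta(\overline{R})$, the strategy $(\upsilon_{B(\overline{R})}, d_{B(\overline{R})})$ lies in $\Delta(\overline{R})$, which is precisely what we needed to show.

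There is essentially no obstacle here: the proposition is just a bookkeeping corollary that matches the aggregate bound $\nu_i B_{ij}$ (which controls $\widetilde{R}_{ji}$, a sum over $y \in \Y_j$) against the conservative per-state constraint $\min_{y \in \Y_j} \overline{R}_{yi}$ needed to ensure each individual $R_{yi} \leq \overline{R}_{yi}$. The only thing to watch is that the bound in Proposition \ref{corollary_bound_kappa_i_and_r}(ii) is stated for $j \in \M_0 \setminus \{i\}$ (i.e., including $j=0$), matching the range of indices in Remark \ref{remark_set} and in the hypothesis, so no separate argument is required for the false-alarm component.
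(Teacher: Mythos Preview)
Your proposal is correct and matches the paper's approach exactly: the paper states the proposition immediately after noting that it follows from the bounds in Proposition \ref{corollary_bound_kappa_i_and_r} together with Remark \ref{remark_set}, and your three steps spell out precisely that combination.
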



We now analyze the asymptotic behavior of the detection delay.  Proposition \ref{proposition_infinity_tau_i} below allows us to use $\tau^{(i)}_A \uparrow \infty$ (resp.\ $\upsilon^{(i)}_B \uparrow \infty$) and $A_i \downarrow 0$ (resp.\ $B_i \downarrow 0$) interchangeably for every $i \in \M$.
Its proof is the same as that of Proposition 3.6 of \cite{Dayanik_2012}.
\begin{proposition} \label{proposition_infinity_tau_i} Fix $i
  \in \M$. We have $\P_i$-a.s., 
  \begin{enumerate}
\item[(i)] $\tau_A^{(i)} \to \infty$ as $A_i \downarrow 0$ and $\tau_A \to \infty$ as $\|A\| \downarrow 0$,
\item[(ii)] $\upsilon_B^{(i)} \to \infty$ as $\overline{B}_i \downarrow 0$ and $\upsilon_B \to \infty$ as $\|B\| \downarrow 0$.
  \end{enumerate}
\end{proposition}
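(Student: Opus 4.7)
The plan is to reduce both parts to a single sample-path argument. Each of $\tau_A^{(i)}$ and $\upsilon_B^{(i)}$ is the first-passage time of an $\Fb$-adapted process above a threshold that tends to $+\infty$ as the design parameter shrinks, and by Assumption \ref{finiteness_ratio} those processes are $\P$-a.s.\ (hence $\P_i$-a.s.) real-valued at every fixed $n$.

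For part (i), I rewrite $\tau_A^{(i)} = \inf\{n \geq 1 : \Phi_n^{(i)} > -\log A_i\}$ using \eqref{identity_phi}. Fix an arbitrary horizon $N \geq 1$. Since Assumption \ref{finiteness_ratio} yields $0 < \widetilde{\Pi}_n^{(i)} < 1$ a.s.\ for every finite $n$, the random variable $M_N := \max_{1 \leq n \leq N} \Phi_n^{(i)}$ is $\P_i$-a.s.\ finite. On that full-measure event, any $A_i$ small enough to satisfy $-\log A_i > M_N$ forces $\tau_A^{(i)} > N$; sending first $A_i \downarrow 0$ and then $N \uparrow \infty$ gives $\tau_A^{(i)} \to \infty$ $\P_i$-a.s. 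Because $\|A\| \downarrow 0$ forces $A_j \downarrow 0$ for every $j \in \M$, the same bound applied to each index yields $\tau_A^{(j)} \to \infty$ $\P_i$-a.s., and hence $\tau_A = \min_{j \in \M} \tau_A^{(j)} \to \infty$ $\P_i$-a.s.

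For part (ii), I exploit the sandwich $\underline{\upsilon}_B^{(i)} \leq \upsilon_B^{(i)}$ already recorded in the paper. Since $\overline{B}_i \downarrow 0$ entails $B_{ij} \downarrow 0$ for every $j \in \M_0 \setminus \{i\}$, the threshold $-\log \overline{B}_i$ diverges to $+\infty$. By Assumption \ref{finiteness_ratio}, the posterior ratios appearing in $\Lambda_n(i,j)$ (and hence in $\Psi_n^{(i)}$) are $\P_i$-a.s.\ real-valued for every finite $n$, so the running-max argument of the previous paragraph, with $\Psi^{(i)}$ and $-\log \overline{B}_i$ in the roles of $\Phi^{(i)}$ and $-\log A_i$, yields $\underline{\upsilon}_B^{(i)} \to \infty$, whence $\upsilon_B^{(i)} \to \infty$ $\P_i$-a.s. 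The statement $\upsilon_B \to \infty$ under $\|B\| \downarrow 0$ then follows by the same minimum argument used for $\tau_A$.

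I do not expect any serious obstacle. The only subtlety is verifying that $\Psi^{(i)}$ remains a.s.\ finite when the index $j=0$ enters the minimum, since Assumption \ref{finiteness_ratio} is stated only for $i \in \M$; but as soon as $|\M| \geq 2$ this is automatic, because $\Psi_n^{(i)} \leq \Lambda_n(i,j)$ for any $j \in \M \setminus \{i\}$, and in the degenerate $|\M|=1$ case one invokes the explicit formulas \eqref{def_alpha}--\eqref{def_alpha_sum} together with the positivity of density ratios to conclude directly that $\widetilde{\alpha}_n^{(0)} > 0$ a.s.\ whenever $\sum_{y \in \Y_0}\eta(y) > 0$. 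Apart from this check, the proof is a verbatim transcription of the argument for Proposition 3.6 of \cite{Dayanik_2012}.
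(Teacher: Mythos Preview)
Your proposal is correct and follows essentially the same approach as the paper, which simply cites Proposition 3.6 of \cite{Dayanik_2012}; the pathwise finite-running-maximum argument you give is exactly that proof. Your handling of the $j=0$ edge case via the bound $\Psi_n^{(i)} \le \Lambda_n(i,j)$ for some $j\in\M\setminus\{i\}$ is a clean way to dispatch the only subtlety.
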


The posterior probability process $(\widetilde{\Pi}^{(i)}_n)_{i \in \M_0}$ converges a.s.\ by \cite{Dayanik2009}. Moreover, because the posterior probability of the correct hypothesis should tend to increase in the long run, on the event $\{\mu=i\}$, $i \in \M$, it is expected that $\widetilde{\Pi}_n^{(i)}$ converges to $1$ and that $\widetilde{\Pi}_n^{(j)}$ converges to $0$ for every $j \in \M_0 \setminus \{i\}$ with probability one. This suggests the a.s.-convergence of $\Lambda_n(i,j)$ to infinity given $\mu=i$ for every $j \in \M_0 \setminus \{i\}$.  For the rest of this section, we further assume that the average increment converges to some strictly positive value.
\begin{assumption} \label{condition_a_s_convergence}
For every $i \in \M$, we assume that
\begin{align*}
\Lambda_n(i,j)/n \xrightarrow[n \uparrow \infty]{\P_i-a.s.} l(i,j),
\end{align*}
for some $ l(i,j) \in (0,\infty]$ for every $j \in \M_0 \setminus \{i\}$, and 
\begin{align*}
\min_{j \in \M_0 \setminus \{i\}} l(i,j) < \infty.
\end{align*}
\end{assumption}
This is indeed satisfied in the i.i.d.\ case (\cite{Dayanik_2012}).  In Section \ref{section_convergence_results},  we will show that this is also satisfied in certain more general settings and that the limit can be expressed in terms of the Kullback-Leibler divergence.



Let us fix any $i \in \M$. We show that, for small values of $A$ and
$B$, the stopping times $\tau_A^{(i)}$ and $\upsilon_B^{(i)}$ in \eqref{def_strategy_a} and \eqref{definition_upsilon} are
essentially determined by the process $\Lambda(i,j(i))$, where
\begin{align*}
  j(i) \in \argmin_{j \in \M_0 \setminus \{i\}} l(i,j)
  \quad \text{is any index in $\M_0 \setminus\{i\}$ that attains}\quad
  l(i) := \min_{j \in \M_0 \setminus \{i\}} l(i,j) > 0,
\end{align*}
and $\P_i$-a.s.\ $\Lambda_n(i,j(i))/n \approx \Phi_n^{(i)}/n \approx
\Psi^{(i)}_n/n \approx l(i)$ for sufficiently large $n$ as the
next proposition implies.

\begin{proposition} \label{proposition_asymptotic_phi} For every $i
  \in \M$, we have $\P_i$-a.s.\ (i) $\Phi_n^{(i)}/n \rightarrow
  l(i)$ and (ii) $\Psi_n^{(i)}/n \rightarrow l(i)$ as $n \uparrow
  \infty$.
\end{proposition}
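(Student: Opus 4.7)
The plan is to first establish (ii) directly from Assumption~\ref{condition_a_s_convergence} and then deduce (i) by sandwiching $\Phi_n^{(i)}$ between $\Psi_n^{(i)}-\log M$ and $\Psi_n^{(i)}$ via an elementary log-sum-exp bound.

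For part (ii), fix $i \in \M$ and work on the $\P_i$-full-measure event on which $\Lambda_n(i,j)/n \to l(i,j)$ holds simultaneously for every $j \in \M_0 \setminus \{i\}$ (this is a finite intersection, hence still has probability one). Split the index set into $J_{\rm fin} = \{ j : l(i,j) < \infty \}$ and $J_{\rm inf} = \{ j : l(i,j) = \infty\}$. The assumption $l(i) = \min_{j} l(i,j) < \infty$ ensures $J_{\rm fin}$ is nonempty, and for $j \in J_{\rm inf}$ we have $\Lambda_n(i,j)/n \to \infty$, so eventually these indices cannot be minimizers of $\min_{j} \Lambda_n(i,j)/n$. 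Hence for all sufficiently large $n$,
\[
\Psi_n^{(i)}/n \;=\; \min_{j \in J_{\rm fin}} \Lambda_n(i,j)/n,
\]
and the minimum of finitely many convergent sequences converges to the minimum of their limits, giving $\Psi_n^{(i)}/n \to \min_{j \in J_{\rm fin}} l(i,j) = l(i)$.

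For part (i), I use two elementary bounds on the log-sum-exp. Keeping only the term achieving the minimum in (\ref{identity_phi}) yields $\Phi_n^{(i)} \leq \Psi_n^{(i)}$, which is already recorded as (\ref{psi_greater_than_phi}). Conversely, since the sum has at most $|\M_0 \setminus \{i\}| = M$ terms, each bounded above by $e^{-\Psi_n^{(i)}}$, we get
\[
\sum_{j \in \M_0 \setminus \{i\}} e^{-\Lambda_n(i,j)} \;\leq\; M\, e^{-\Psi_n^{(i)}},
\]
so $\Phi_n^{(i)} \geq \Psi_n^{(i)} - \log M$. Combining,
\[
\frac{\Psi_n^{(i)}}{n} - \frac{\log M}{n} \;\leq\; \frac{\Phi_n^{(i)}}{n} \;\leq\; \frac{\Psi_n^{(i)}}{n}.
\]
Letting $n \uparrow \infty$ and applying part (ii), both outer terms tend to $l(i)$ $\P_i$-a.s., so $\Phi_n^{(i)}/n \to l(i)$ $\P_i$-a.s.

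There is no serious obstacle here: the entire proof is essentially a sandwich argument once Assumption~\ref{condition_a_s_convergence} is in hand. The only mildly delicate point is accounting for the possibility that some $l(i,j)$ equal $+\infty$ when passing the minimum to the limit in (ii), but this is handled by noting that diverging terms eventually cannot be minimizers, which is why the hypothesis $l(i) < \infty$ is needed.
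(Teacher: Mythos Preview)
Your proof is correct and follows essentially the same route as the paper. The paper derives (ii) directly from Assumption~\ref{condition_a_s_convergence} and obtains (i) by invoking a separate log-sum-exp lemma (Lemma~\ref{lemma_convergence_min_to_mu}), whose content is exactly the sandwich bound $\Psi_n^{(i)} - \log M \le \Phi_n^{(i)} \le \Psi_n^{(i)}$ that you write out directly; so the two arguments are equivalent, with the paper's version simply packaging the bound into a reusable lemma.
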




For the proof of Proposition \ref{proposition_asymptotic_phi} above, (ii) follows immediately by Assumption \ref{condition_a_s_convergence} and (i) follows from Lemma \ref{lemma_convergence_min_to_mu} below after
replacing $Y^{(j)}_n$, $\P$, and $(\mu_j)_{j \in \M_0 \setminus \{i\}}$ in the lemma with
$\Lambda_n(i,j)/n$, $\P_i$, and $(l(i,j))_{j \in \M_0 \setminus \{i\}}$, respectively, for
every fixed $i\in \M$.

\begin{lemma}\label{lemma_convergence_min_to_mu} For every $j=1,\dots
  ,m$, let $Y^{(j)}=(Y_n^{(j)})_{n \geq 1}$ be a sequence of random
  variables defined on a common probability space $(\Omega, \Ec,\P)$,
  and suppose that $Y_n^{(j)}$ converges a.s.\ to some constant
  $\mu_j \in [-\infty,\infty]$ for every $j=1,\dots ,m$. Then $- \frac 1 n \log \sum_{j=1}^m
      e^{-n Y_n^{(j)}} \xrightarrow[n \uparrow \infty]{a.s.} \min_{1 \leq k \leq m} \mu_k$.
\end{lemma}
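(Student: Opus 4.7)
The plan is to sandwich the quantity of interest between two expressions that both converge to $\min_{1 \leq k \leq m} \mu_k$ almost surely, and then apply the squeeze theorem pathwise on the probability-one event on which every coordinate converges.

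First I would use the elementary two-sided bound
\begin{align*}
\max_{1 \leq j \leq m} e^{-n Y_n^{(j)}} \;\leq\; \sum_{j=1}^{m} e^{-n Y_n^{(j)}} \;\leq\; m \cdot \max_{1 \leq j \leq m} e^{-n Y_n^{(j)}},
\end{align*}
which is valid for every $n$ and every $\omega$, with the convention $e^{-n \cdot (-\infty)} = +\infty$ and $e^{-n \cdot (+\infty)} = 0$. Taking $-\tfrac{1}{n} \log$ on all three sides, and using the identity $-\tfrac{1}{n} \log \max_j e^{-n Y_n^{(j)}} = \min_j Y_n^{(j)}$, yields
\begin{align*}
\min_{1 \leq j \leq m} Y_n^{(j)} - \frac{\log m}{n} \;\leq\; -\frac{1}{n} \log \sum_{j=1}^{m} e^{-n Y_n^{(j)}} \;\leq\; \min_{1 \leq j \leq m} Y_n^{(j)}.
\end{align*}

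Next I would verify that $\min_{1 \leq j \leq m} Y_n^{(j)} \to \min_{1 \leq k \leq m} \mu_k$ almost surely. Let $\Omega_0 := \bigcap_{j=1}^{m} \{ \omega : Y_n^{(j)}(\omega) \to \mu_j \}$; this is a finite intersection of probability-one events, hence $\P(\Omega_0) = 1$. Fix $\omega \in \Omega_0$. The function $(y_1, \ldots, y_m) \mapsto \min_j y_j$ is continuous on $[-\infty, +\infty]^m$ (in the usual order topology), so $\min_j Y_n^{(j)}(\omega) \to \min_j \mu_j$. One can also check this by hand: if all $\mu_j$ are finite, pointwise convergence plus the $1$-Lipschitz property $|\min_j a_j - \min_j b_j| \leq \max_j |a_j - b_j|$ does it; if some $\mu_j = -\infty$ then $\min_j \mu_j = -\infty$ and the corresponding $Y_n^{(j)} \to -\infty$ drives the minimum to $-\infty$; if some $\mu_j = +\infty$ those coordinates eventually exceed any threshold and do not affect the minimum of the remaining ones.

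Finally, since $\log m / n \to 0$, the squeeze theorem applied to the two displayed inequalities gives
\begin{align*}
-\frac{1}{n} \log \sum_{j=1}^{m} e^{-n Y_n^{(j)}} \;\xrightarrow[n \uparrow \infty]{}\; \min_{1 \leq k \leq m} \mu_k
\end{align*}
on $\Omega_0$, which completes the proof. There is no real obstacle here; the only mild subtlety worth addressing explicitly is the uniform handling of the cases $\mu_j = \pm\infty$ in the continuity argument for the $\min$, which is why I would spell that step out rather than leave it to the reader.
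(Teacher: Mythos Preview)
Your proof is correct; the log-sum-exp sandwich $\min_j Y_n^{(j)} - (\log m)/n \le -\tfrac{1}{n}\log\sum_j e^{-nY_n^{(j)}} \le \min_j Y_n^{(j)}$ together with the continuity of the minimum on $[-\infty,\infty]^m$ is exactly the standard argument. The paper itself omits the proof, citing it as a straightforward extension of Lemma~5.2 of Baum and Veeravalli (1994), which uses precisely this bound, so your write-up matches the intended approach.
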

Lemma \ref{lemma_convergence_min_to_mu} is a straightforward extension of Lemma 5.2 of \cite{Baum_Veeravalli1994} and is omitted.

The following lemma can be derived from Proposition \ref{proposition_asymptotic_phi}.  The proof is the same as that of Lemma 3.9 of  \cite{Dayanik_2012}.
\begin{lemma}\label{proposition_stopping_time_asymptotic_lemma}
  For every $i \in \M$ and any $j(i) \in \argmin_{j\in \M_0\setminus
    \{i\}} l(i,j)$, we have $\P_i$-a.s. 
  \begin{align*}
    \textrm{\normalfont (i)} &\quad -\frac{\tau_A^{(i)}}{\log A_i}
    \xrightarrow{A_i \downarrow 0}
    \frac{1}{l(i)}, && \textrm{\normalfont (ii)} \quad
    - \frac{(\tau_A^{(i)}-\theta)_+} {\log A_i} \xrightarrow{A_i
    \downarrow 0} \frac{1}{l(i)},\\
    \textrm{\normalfont (iii)} &\quad -\frac{\upsilon_B^{(i)}}{\log
      B_{ij(i)}} \xrightarrow{\overline{B}_i \downarrow
    0}  \frac{1}{l(i)}, &&
    \textrm{\normalfont (iv)} \quad
    -\frac{(\upsilon_B^{(i)}-\theta)_+}{\log B_{ij(i)}}
    \xrightarrow{\overline{B}_i \downarrow 0}
    \frac{1}{l(i)}.
  \end{align*}
  
\end{lemma}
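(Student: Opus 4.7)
The plan is to establish (i) and (iii) by the standard first-passage-time argument applied to the sequences $\Phi_n^{(i)}$ and $\Psi_n^{(i)}$, whose scaled limits are identified in Proposition \ref{proposition_asymptotic_phi}; parts (ii) and (iv) will then follow from (i) and (iii) by replacing $\tau_A^{(i)}$ (resp.\ $\upsilon_B^{(i)}$) with $(\tau_A^{(i)}-\theta)_+$ and exploiting $\theta<\infty$ a.s.

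First I would prove (i). By \eqref{def_tau_a_i} and \eqref{identity_phi}, on the event $\{\tau_A^{(i)} \geq 2\}$ one has $\Phi_{\tau_A^{(i)}-1}^{(i)} \leq -\log A_i < \Phi_{\tau_A^{(i)}}^{(i)}$. Dividing through by $\tau_A^{(i)}$,
\begin{align*}
\frac{\tau_A^{(i)}-1}{\tau_A^{(i)}}\cdot \frac{\Phi_{\tau_A^{(i)}-1}^{(i)}}{\tau_A^{(i)}-1} \;\leq\; \frac{-\log A_i}{\tau_A^{(i)}} \;<\; \frac{\Phi_{\tau_A^{(i)}}^{(i)}}{\tau_A^{(i)}}.
\end{align*}
By Proposition \ref{proposition_infinity_tau_i}(i) we have $\tau_A^{(i)}\uparrow\infty$ $\P_i$-a.s.\ as $A_i\downarrow 0$, and by Proposition \ref{proposition_asymptotic_phi}(i) both outer quantities converge to $l(i)\in(0,\infty)$ $\P_i$-a.s. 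The sandwich then yields $-\log A_i/\tau_A^{(i)} \to l(i)$ $\P_i$-a.s., which is (i).

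For (ii), use the decomposition $\tau_A^{(i)} = (\tau_A^{(i)}-\theta)_+ + \theta\wedge \tau_A^{(i)}$, so that
\begin{align*}
\left|\,\frac{(\tau_A^{(i)}-\theta)_+}{-\log A_i} - \frac{\tau_A^{(i)}}{-\log A_i}\,\right| \;\leq\; \frac{\theta}{-\log A_i}.
\end{align*}
Since $\theta<\infty$ $\P_i$-a.s.\ and $-\log A_i \uparrow \infty$, the right-hand side tends to $0$ $\P_i$-a.s., and (ii) follows from (i).

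For (iii), apply the same first-passage argument to the bracketing stopping times $\underline{\upsilon}_B^{(i)}$ and $\overline{\upsilon}_B^{(i)}$ using Proposition \ref{proposition_asymptotic_phi}(ii), obtaining
\begin{align*}
-\frac{\underline{\upsilon}_B^{(i)}}{\log \overline{B}_i} \xrightarrow{\overline{B}_i\downarrow 0} \frac{1}{l(i)}, \qquad -\frac{\overline{\upsilon}_B^{(i)}}{\log \underline{B}_i} \xrightarrow{\overline{B}_i\downarrow 0} \frac{1}{l(i)} \qquad \P_i\text{-a.s.}
\end{align*}
The technical point here is to replace the normalizing logarithms $\log\overline{B}_i$ and $\log\underline{B}_i$ by the desired $\log B_{ij(i)}$. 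By the standing assumption that $\underline{B}_i/\overline{B}_i$ is bounded below by a strictly positive constant (and trivially bounded above by $1$), and since $B_{ij(i)}\in[\underline{B}_i,\overline{B}_i]$, we have
\begin{align*}
\frac{\log \overline{B}_i}{\log B_{ij(i)}} \to 1 \quad \text{and} \quad \frac{\log \underline{B}_i}{\log B_{ij(i)}} \to 1 \qquad \text{as } \overline{B}_i\downarrow 0,
\end{align*}
because $\log\overline{B}_i$, $\log\underline{B}_i$, and $\log B_{ij(i)}$ all diverge to $-\infty$ at the same rate (they differ by bounded additive constants). Multiplying the two displayed limits by these ratios and invoking the sandwich $\underline{\upsilon}_B^{(i)}\leq \upsilon_B^{(i)} \leq \overline{\upsilon}_B^{(i)}$ gives (iii). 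Finally (iv) follows from (iii) exactly as (ii) followed from (i), using $\theta<\infty$ $\P_i$-a.s. The main (minor) obstacle is the bookkeeping in (iii) needed to convert the natural normalizations $\log\overline{B}_i,\log\underline{B}_i$ coming from the bracketing stopping times into the single normalization $\log B_{ij(i)}$; this is precisely where the assumption \eqref{bounds_r_ratio}-type bound on $\underline{B}_i/\overline{B}_i$ enters.
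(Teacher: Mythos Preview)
Your proof is correct and follows essentially the same approach as the paper, which defers to Lemma 3.9 of \cite{Dayanik_2012}: the standard first-passage-time sandwich applied to $\Phi^{(i)}$ and $\Psi^{(i)}$ via Proposition \ref{proposition_asymptotic_phi}, together with $\theta<\infty$ a.s.\ for (ii) and (iv), and the bracketing $\underline{\upsilon}_B^{(i)}\le \upsilon_B^{(i)}\le \overline{\upsilon}_B^{(i)}$ plus the log-ratio concordance (Remark \ref{rem:concordance-between-B-values}) for (iii). The only cosmetic gap is that you implicitly use $\underline{\upsilon}_B^{(i)}\to\infty$ and $\overline{\upsilon}_B^{(i)}\to\infty$ as $\overline{B}_i\downarrow 0$, which follows immediately since both thresholds $-\log \overline{B}_i$ and $-\log \underline{B}_i$ diverge.
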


\begin{remark}
\label{rem:concordance-between-B-values}
We shall assume that $0< B_{ij}<1$ or $-\infty <\log
  B_{ij}<0$ for all $i \in \M$ and $j\in\M_0 \setminus\{i\}$ as we are interested in the limits of
  certain quantities as $\|B\|\downarrow 0$.  This implies
  \begin{align*}
    1=\lim_{\overline{B}_i \downarrow 0} \; \frac{\log B_{ij}}{\log
      \overline{B}_i} = \lim_{\overline{B}_i \downarrow 0} \;
    \frac{\log \underline{B}_i}{\log \overline{B}_i} =
    \lim_{\overline{B}_i \downarrow 0} \; \frac{\log B_{ij}}{\log
      \underline{B}_i} \quad \text{for every $i\in\M$, $j\in
      \M_0\setminus \{i\}$,}
  \end{align*}
  where the last equality follows from the first two equalities.
\end{remark}

For every $i \in \M$, conditionally on $\{Y_0 \in \Y_i\}$, the Markov chain $Y$ always admits a stationary distribution; namely, there exists a unique nonnegative $w_i(y)$, for every $y \in \Y_i$, such that
\begin{align*}
\frac 1 {n+1} \sum_{m=0}^n 1_{\{y\}} (Y_m) \xrightarrow[\P_i-a.s.]{n \uparrow \infty} w_i(y), \quad  \textrm{ on } \{ Y_0 \in \mathcal{Y}_i\};
\end{align*}
see, e.g., \cite{Tijms_2003}.
Then
\begin{align*}
\frac 1 {n+1} \sum_{m=0}^n c(Y_m) \xrightarrow[\P_i-a.s.]{n \uparrow \infty} c_i : = \sum_{y \in \Y_i} c(y) w_i(y), \quad  \textrm{ on } \{ Y_0 \in \mathcal{Y}_i\}.
\end{align*}
This and the a.s.\ finiteness of $\theta$ together with Lemma
\ref{proposition_stopping_time_asymptotic_lemma} prove the next
lemma.
\begin{lemma}
  For every $i \in \M$ and any $j(i) \in \argmin_{j\in \M_0\setminus
    \{i\}} l(i,j)$, we have $\P_i$-a.s. 
  \begin{align*}
    \textrm{\normalfont (i)} &\quad -\frac{\sum_{m=0}^{\tau_A^{(i)}} c(Y_m)}{\log A_i}
    \xrightarrow{A_i \downarrow 0}
    \frac{c_i}{l(i)}, && \textrm{\normalfont (ii)} \quad
    - \frac{\sum_{m=\theta}^{\tau_A^{(i)} \vee \theta} c(Y_m)} {\log A_i} \xrightarrow{A_i
    \downarrow 0} \frac{c_i}{l(i)},\\
    \textrm{\normalfont (iii)} &\quad -\frac{\sum_{m=0}^{v_B^{(i)}} c(Y_m)}{\log
      B_{ij(i)}} \xrightarrow{\overline{B}_i \downarrow
    0}  \frac{c_i}{l(i)}, &&
    \textrm{\normalfont (iv)} \quad
    -\frac{\sum_{m=\theta}^{v_B^{(i)} \vee \theta} c(Y_m)}{\log B_{ij(i)}}
    \xrightarrow{\overline{B}_i \downarrow 0}
    \frac{c_i}{l(i)}.
  \end{align*}
\end{lemma}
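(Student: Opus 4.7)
The hint preceding the statement already pins down the ingredients: the Cesàro-type ergodic limit displayed just above, the a.s.\ finiteness of $\theta$, and Lemma \ref{proposition_stopping_time_asymptotic_lemma}. Fix $i\in\M$ and work throughout under $\P_i$. The displayed convergence $\frac{1}{n+1}\sum_{m=0}^n c(Y_m)\to c_i$ is stated on the event $\{Y_0\in\Y_i\}$, so first I would lift it to $\P_i$-a.s. Since $\theta<\infty$ $\P_i$-a.s.\ and $Y_\theta\in\Y_i$, the strong Markov property applied to the shifted chain $(Y_{\theta+k})_{k\geq 0}$ gives
\begin{align*}
\frac{1}{n-\theta+1}\sum_{m=\theta}^{n} c(Y_m)\xrightarrow[\P_i\text{-a.s.}]{n\uparrow\infty} c_i,
\end{align*}
and since the pre-absorption sum $\sum_{m=0}^{\theta-1} c(Y_m)\leq \|c\|\theta$ is finite $\P_i$-a.s., the Cesàro limit $\frac{1}{n+1}\sum_{m=0}^n c(Y_m)\to c_i$ holds $\P_i$-a.s.

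For (i), the plan is to write the product decomposition
\begin{align*}
-\frac{\sum_{m=0}^{\tau_A^{(i)}} c(Y_m)}{\log A_i}=\left(-\frac{\tau_A^{(i)}+1}{\log A_i}\right)\cdot \frac{1}{\tau_A^{(i)}+1}\sum_{m=0}^{\tau_A^{(i)}} c(Y_m).
\end{align*}
The first factor converges $\P_i$-a.s.\ to $1/l(i)$ by Lemma \ref{proposition_stopping_time_asymptotic_lemma}(i). For the second factor, Proposition \ref{proposition_infinity_tau_i}(i) gives $\tau_A^{(i)}\uparrow\infty$ $\P_i$-a.s.\ as $A_i\downarrow 0$, so it is a random subsequence of the Cesàro averages and converges $\P_i$-a.s.\ to $c_i$. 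The product gives $c_i/l(i)$. Part (iii) is identical with $(\tau_A^{(i)},\log A_i)$, Proposition \ref{proposition_infinity_tau_i}(i), and Lemma \ref{proposition_stopping_time_asymptotic_lemma}(i) replaced by $(\upsilon_B^{(i)},\log B_{ij(i)})$, Proposition \ref{proposition_infinity_tau_i}(ii), and Lemma \ref{proposition_stopping_time_asymptotic_lemma}(iii); Remark \ref{rem:concordance-between-B-values} ensures that $\log B_{ij(i)}$ and $\log\overline{B}_i$ are interchangeable in the denominator.

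For (ii), since $\theta<\infty$ and $\tau_A^{(i)}\uparrow\infty$ $\P_i$-a.s., $\tau_A^{(i)}\vee\theta=\tau_A^{(i)}$ for all sufficiently small $A_i$, so
\begin{align*}
\sum_{m=\theta}^{\tau_A^{(i)}\vee\theta} c(Y_m)=\sum_{m=0}^{\tau_A^{(i)}} c(Y_m)-\sum_{m=0}^{\theta-1} c(Y_m).
\end{align*}
Divided by $-\log A_i\to\infty$, the first term tends to $c_i/l(i)$ by (i), while the second term is bounded $\P_i$-a.s.\ and hence negligible; this gives (ii), and (iv) follows identically. The only non-routine step is the subsequence transfer in the product decomposition — it requires knowing that a.s.\ convergence of the deterministic-time Cesàro averages propagates to the random times $\tau_A^{(i)}$ and $\upsilon_B^{(i)}$ — but this is immediate from the fact that an a.s.\ convergent sequence of real numbers converges along any a.s.\ divergent random index.
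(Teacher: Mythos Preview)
Your proof is correct and follows exactly the approach the paper indicates: it combines the Ces\`aro ergodic limit $\frac{1}{n+1}\sum_{m=0}^n c(Y_m)\to c_i$, the a.s.\ finiteness of $\theta$, and Lemma~\ref{proposition_stopping_time_asymptotic_lemma} via the product decomposition. The paper states the proof in one line, and you have simply filled in the routine details (the lift to $\P_i$-a.s.\ via the strong Markov property, the random-subsequence transfer, and the subtraction of the finite pre-absorption sum).
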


Because we want to minimize the $m^{th}$ moment of the detection delay
time for any $m\ge 1$, we will strengthen the convergence
results of Lemma \ref{proposition_stopping_time_asymptotic_lemma}. 
We require Condition \ref{condition_uniformly_integrable} below for some $r \geq m$.
  
  \begin{condition}[Uniform Integrability] \label{condition_uniformly_integrable}
  For given $r \geq 1$, we assume that
  \begin{itemize}
  \item[(i)] $(\tau_A^{(i)}/(-\log A_i))^r_{A_i > 0}$ is $\P_i$-uniformly integrable for every $i \in \M$,
  \item[(ii)] $(\upsilon_B^{(i)}/(-\log B_{ij(i)}))^r_{B_i > 0}$ is $\P_i$-uniformly integrable  for every $i \in \M$.
  \end{itemize}
  \end{condition}
Because $c(\cdot)$ is bounded, this also implies the following.
  \begin{lemma} For every $i \in \M$, we have the followings.
  \begin{itemize}
\item[(i)]Under Condition \ref{condition_uniformly_integrable} (i) for some $r \geq 1$,  $\big( (\sum_{m=0}^{\tau_A^{(i)}} c(Y_m))/(-\log A_i) \big)_{A_i > 0}$ is also $\P_i$-uniformly integrable.
\item[(ii)]Under Condition \ref{condition_uniformly_integrable} (ii)  for some $r \geq 1$,   $\big( (\sum_{m=0}^{\upsilon_B^{(i)}} c(Y_m))/(-\log B_{ij(i)}) \big)_{B_i > 0}$ is also $\P_i$-uniformly integrable.
\end{itemize}
\end{lemma}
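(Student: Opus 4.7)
The plan is to use the boundedness of the cost function $c$ to dominate $(\sum_{m=0}^{\tau_A^{(i)}} c(Y_m))/(-\log A_i)$ by an affine function of $\tau_A^{(i)}/(-\log A_i)$ (and similarly for $\upsilon_B^{(i)}$), then to pass from $r$-th power uniform integrability of these ratios to their first-power uniform integrability, and finally to invoke the closure of the class of $\P_i$-uniformly integrable families under domination and finite addition.

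Fix $i \in \M$, and set $\|c\| := \max_{y \in \Y} c(y) < \infty$. Since the sum $\sum_{m=0}^{\tau_A^{(i)}} c(Y_m)$ consists of $\tau_A^{(i)} + 1$ nonnegative terms each bounded by $\|c\|$,
\[
0 \;\leq\; \frac{\sum_{m=0}^{\tau_A^{(i)}} c(Y_m)}{-\log A_i} \;\leq\; \|c\| \, \frac{\tau_A^{(i)}}{-\log A_i} \;+\; \frac{\|c\|}{-\log A_i}
\]
pointwise for every $A_i \in (0,1)$. The deterministic remainder $\|c\|/(-\log A_i)$ is bounded and vanishes as $A_i \downarrow 0$, so it trivially constitutes a $\P_i$-uniformly integrable family. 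For the first term, I would invoke the elementary inequality $z \leq 1 + z^r$, valid for all $z \geq 0$ and $r \geq 1$, which dominates the nonnegative family $\tau_A^{(i)}/(-\log A_i)$ by $1 + (\tau_A^{(i)}/(-\log A_i))^r$. The latter is $\P_i$-uniformly integrable by Condition \ref{condition_uniformly_integrable}(i), and domination by a uniformly integrable family transfers uniform integrability; hence $\tau_A^{(i)}/(-\log A_i)$, and thus $\|c\|\,\tau_A^{(i)}/(-\log A_i)$, is $\P_i$-uniformly integrable. Adding the two $\P_i$-uniformly integrable bounds, the right-hand side above is uniformly integrable, and therefore so is the dominated nonnegative left-hand side. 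This proves (i), and (ii) follows by the same argument with $(A_i, \tau_A^{(i)})$ replaced throughout by $(B_{ij(i)}, \upsilon_B^{(i)})$ and Condition \ref{condition_uniformly_integrable}(i) replaced by Condition \ref{condition_uniformly_integrable}(ii).

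No step constitutes a serious obstacle: the lemma is a direct consequence of the boundedness of $c$ together with standard stability properties of uniform integrability (closure under domination and finite sums). The only point worth spelling out is the passage from $r$-th power uniform integrability to first-power uniform integrability via the inequality $z \leq 1 + z^r$ for $r \geq 1$, which is precisely what allows Condition \ref{condition_uniformly_integrable} to be applied directly, without requiring a separate hypothesis at exponent one.
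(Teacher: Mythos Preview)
Your proof is correct and follows exactly the paper's approach, which simply invokes the boundedness of $c$ without further detail; you spell out the domination argument and the passage from $r$th-power to first-power uniform integrability via $z\le 1+z^r$. One small slip: the remainder $\|c\|/(-\log A_i)$ is not bounded over all of $(0,1)$ (it blows up as $A_i\uparrow 1$), but since Condition~\ref{condition_uniformly_integrable} itself can only make sense for $A_i$ bounded away from $1$, this is harmless.
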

Hence, Condition \ref{condition_uniformly_integrable} for some $r \geq
m$ is sufficient for the $L^m$-convergences.  


    \begin{lemma}\label{lemma_l_r_convergence_tau_i}
  For every $i \in \M$ and $m \geq 1$, we have the following.
  \begin{itemize}
  \item[(i)] If Condition \ref{condition_uniformly_integrable} (i) holds with some $r \geq m$, then we have
  \begin{align}
  {\tau_A^{(i)}} /(-\log A_i) \xrightarrow[A_i \downarrow 0]{\textrm{in $L^m(\P_i)$}} {l(i)}^{-1} \; \textrm{and} \; {D_i^{(c,m)}(\tau_A)} /(-\log A_i) \xrightarrow{A_i \downarrow 0} (c_i/l(i))^m. \label{convergence_l_r_d_a}
  \end{align}
  \item[(ii)] If Condition \ref{condition_uniformly_integrable} (ii) holds with some $r \geq m$, then we have
  \begin{align}
  {\upsilon_B^{(i)}} /(-\log B_{ij(i)}) \xrightarrow[\overline{B}_i \downarrow 0]{\textrm{in $L^m(\P_i)$}} {l(i)}^{-1} \; \textrm{and} \; {D_i^{(c,m)}(\upsilon_B)} /(-\log B_{ij(i)}) \xrightarrow{\overline{B}_i \downarrow 0} (c_i/l(i))^m. \label{convergence_l_r_d_b}
  \end{align}
  \end{itemize}
  \end{lemma}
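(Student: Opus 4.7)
Assertions (i) and (ii) are structurally identical, so I would prove (i) in detail and indicate that (ii) follows by the obvious substitution $(\tau_A^{(i)},\tau_A,A_i)\leftrightarrow (\upsilon_B^{(i)},\upsilon_B,B_{ij(i)})$, with Remark \ref{rem:concordance-between-B-values} taking care of the interchangeability of $\log B_{ij(i)}$, $\log \overline{B}_i$, and $\log \underline{B}_i$ in limits. Each assertion packages two convergences---the $L^m(\P_i)$ convergence of the rescaled stopping time, and the convergence of the rescaled $m$-th detection-delay moment---and both are obtained by upgrading the a.s.\ convergences already recorded in Lemma \ref{proposition_stopping_time_asymptotic_lemma} and the preceding ergodic-type lemma to $L^m$, combining them with Condition \ref{condition_uniformly_integrable} via Vitali's convergence theorem.

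For the stopping-time piece of (i), Lemma \ref{proposition_stopping_time_asymptotic_lemma}(i) supplies $\tau_A^{(i)}/(-\log A_i)\to 1/l(i)$ $\P_i$-a.s. If $r=m$ in Condition \ref{condition_uniformly_integrable}(i), Vitali's theorem delivers $L^m(\P_i)$ convergence directly. If $r>m$, the elementary bound $x^m\le 1+x^r$ for $x\ge 0$ shows that $\P_i$-uniform integrability of $(\tau_A^{(i)}/(-\log A_i))^r$ transfers to that of $(\tau_A^{(i)}/(-\log A_i))^m$, and Vitali applies again.

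For the detection-delay piece I first verify that $\P_i$-a.s., $\tau_A=\tau_A^{(i)}$ for all sufficiently small $\|A\|$. Under $\P_i$ we have $\widetilde{\Pi}_n^{(i)}\to 1$ a.s., while Assumption \ref{finiteness_ratio} yields $\widetilde{\Pi}_n^{(i)}>0$ at every finite $n$; hence $\inf_{n\ge 1}\widetilde{\Pi}_n^{(i)}>0$ $\P_i$-a.s., which together with $\widetilde{\Pi}_n^{(j)}\le 1-\widetilde{\Pi}_n^{(i)}$ forces $\sup_{n\ge 1}\widetilde{\Pi}_n^{(j)}<1$ $\P_i$-a.s. for every $j\neq i$. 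Therefore $\tau_A^{(j)}=\infty$ once $A_j$ is below a random threshold, and $\tau_A=\min_k\tau_A^{(k)}=\tau_A^{(i)}$ eventually. Writing $S_n:=\sum_{t=0}^{n-1}c(Y_t)$, the preceding ergodic-type lemma (after absorbing the harmless term $c(Y_{\tau_A^{(i)}})/(-\log A_i)\to 0$) then yields $S_{\tau_A}/(-\log A_i)\to c_i/l(i)$ $\P_i$-a.s. For uniform integrability I use the domination $(S_{\tau_A}/(-\log A_i))^m\le \|c\|^m(\tau_A^{(i)}/(-\log A_i))^m$; since the majorant is $\P_i$-u.i.\ by the previous paragraph, so is the left-hand side. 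Vitali's theorem now gives $\E_i[(S_{\tau_A}/(-\log A_i))^m]\to (c_i/l(i))^m$, which is the intended convergence of $D_i^{(c,m)}(\tau_A)/(-\log A_i)^m$ (the exponent $m$ on the denominator appears to be missing from the stated limit \eqref{convergence_l_r_d_a}).

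The only genuinely nontrivial point is the identification $\tau_A=\tau_A^{(i)}$ eventually under $\P_i$; the rest is a routine pairing of a.s.\ convergence with uniform integrability via Vitali. Part (ii) requires no new ideas, only the symmetric replacements above together with Remark \ref{rem:concordance-between-B-values}.
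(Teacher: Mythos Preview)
Your proposal is correct and matches the paper's intended argument: the paper does not spell out a proof of this lemma but presents it as an immediate consequence of the preceding a.s.\ convergence results (Lemma \ref{proposition_stopping_time_asymptotic_lemma} and the ergodic-type lemma after it) combined with the uniform-integrability lemma just above, exactly via Vitali as you do. Your explicit verification that $\tau_A=\tau_A^{(i)}$ eventually under $\P_i$ (needed to pass from $\tau_A^{(i)}$ to $\tau_A$ in the detection-delay term) is a step the paper leaves implicit, and your remark that the denominator in \eqref{convergence_l_r_d_a}--\eqref{convergence_l_r_d_b} should carry the exponent $m$ is right and consistent with how the lemma is used in \eqref{sigma_expression}. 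One small caveat worth making explicit in your write-up: your ``$\tau_A=\tau_A^{(i)}$ eventually'' argument requires all components of $A$ to shrink (i.e.\ $\|A\|\downarrow 0$), not just $A_i$; this is indeed the regime in which the lemma is applied (through $A(c)$ as $\|c\|\downarrow 0$), but the limit label ``$A_i\downarrow 0$'' in the statement is slightly loose on this point.
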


  Alternatively to Condition \ref{condition_uniformly_integrable}, we
  can use the \emph{$r$-quick convergence}.  The $r$-quick convergence
  of suitable stochastic processes is known to be sufficient for the
  asymptotic optimalities of certain sequential rules based on
  non-i.i.d.\ observations in CPD and SMHT problems and also in the
  diagnosis problem of \cite{Dayanik_2012}.

\begin{definition}[The $r$-quick convergence]
  Let $(\xi_n)_{n\ge 0}$ be any stochastic process and $r > 0$. Then 
\begin{align*}
\rqliminf_{n \rightarrow \infty} \xi_n \geq c
\end{align*}
  if and only if $\E \left[ (T_\delta)^r \right] < \infty$ for every
  $\delta > 0$, where $T_\delta :=\inf \big\{ n \geq 1: \inf_{m \geq n} \xi_m > c - \delta
  \big\}$,  $\delta > 0$.  
\end{definition}




Condition \ref{condition_uniformly_integrable} is implied by  Condition \ref{condition_r_quickly} below; see \cite{Dayanik_2012}, for example.

\begin{condition} \label{condition_r_quickly} For some $r \ge 1$, (i)
  $\rqliminf_{n \uparrow \infty} {\Phi_n^{(i)}}/n \geq l(i)$ under
  $\P_i$, (ii) $\rqliminf_{n \uparrow \infty} {\Psi_n^{(i)}}/n \geq
  l(i)$ under $\P_i$ for every $i\in \M$.
\end{condition}


\begin{proposition}  \label{prop_upper_bound}
  Let $m \geq 1$. (i) If Condition \ref{condition_r_quickly} (i) holds
  for some $r\ge m$, then  Condition
  \ref{condition_uniformly_integrable} (i) and (\ref{convergence_l_r_d_a}) hold.
  (ii) If Condition \ref{condition_r_quickly} (ii) holds for some
  $r\ge m$, then  Condition
  \ref{condition_uniformly_integrable} (ii) and  (\ref{convergence_l_r_d_b}) hold.
  \end{proposition}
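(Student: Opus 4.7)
The two parts are structurally identical, so the plan is to carry out (i) in detail and to note that (ii) follows by the same argument with $(\Psi^{(i)}, \upsilon_B^{(i)}, -\log \underline{B}_i)$ replacing $(\Phi^{(i)}, \tau_A^{(i)}, -\log A_i)$; the extra bookkeeping in (ii) is only to pass between $-\log B_{ij(i)}$, $-\log\underline{B}_i$, and $-\log\overline{B}_i$, which is handled by Remark \ref{rem:concordance-between-B-values} together with the sandwich $\underline\upsilon_B^{(i)}\le\upsilon_B^{(i)}\le\overline\upsilon_B^{(i)}$. The heart of the matter is therefore to upgrade the $r$-quick limit inferior of $\Phi^{(i)}_n/n$ into a pathwise upper bound on $\tau_A^{(i)}/(-\log A_i)$ by a $\P_i$-integrable random variable; once this is done, uniform integrability follows by domination and the $L^m$-convergence follows from the a.s.\ convergence provided by Lemma \ref{proposition_stopping_time_asymptotic_lemma} (i) via Vitali's theorem.

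Fix $i\in\M$ and $\delta\in(0,l(i))$. By Condition \ref{condition_r_quickly} (i), the random variable
\[
T_\delta:=\inf\Bigl\{n\ge 1:\ \inf_{m\ge n}\Phi^{(i)}_m/m>l(i)-\delta\Bigr\}
\]
satisfies $\E_i[T_\delta^{\,r}]<\infty$. On every sample path, for $n\ge T_\delta$ one has $\Phi^{(i)}_n>n(l(i)-\delta)$, so the threshold crossing $\Phi^{(i)}_n>-\log A_i$ that defines $\tau_A^{(i)}$ is forced whenever $n\ge T_\delta$ and $n>-\log A_i/(l(i)-\delta)$. Therefore
\[
\tau_A^{(i)}\ \le\ T_\delta+\Bigl\lceil \tfrac{-\log A_i}{l(i)-\delta}\Bigr\rceil\qquad\P_i\text{-a.s.,}
\]
and, restricting to $A_i\le 1/e$ so that $-\log A_i\ge 1$, division gives
\[
\frac{\tau_A^{(i)}}{-\log A_i}\ \le\ T_\delta+\frac{1}{l(i)-\delta}+1.
\]
The right-hand side belongs to $L^r(\P_i)$, so $(\tau_A^{(i)}/(-\log A_i))^{r}$ is dominated by a $\P_i$-integrable random variable and the family is $\P_i$-uniformly integrable; this is Condition \ref{condition_uniformly_integrable} (i). Combined with the $\P_i$-a.s.\ convergence $\tau_A^{(i)}/(-\log A_i)\to 1/l(i)$ from Lemma \ref{proposition_stopping_time_asymptotic_lemma} (i), Vitali's theorem (applicable because $r\ge m$) yields the $L^m(\P_i)$-convergence in the first half of \eqref{convergence_l_r_d_a}.

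For the detection-delay half of \eqref{convergence_l_r_d_a}, bound the sum $\sum_{t=0}^{\infty}c(Y_t)\mathbf 1_{\{t<\tau_A\}}$ above by $\|c\|\tau_A^{(i)}$ using $\tau_A\le\tau_A^{(i)}$; the UI of the $r$-th power of $\tau_A^{(i)}/(-\log A_i)$ then transfers to UI (in $\P_i$) of the $m$-th power of the scaled sum. On the other hand, under $\P_i$ the posterior $\widetilde\Pi_n^{(j)}\to 0$ a.s.\ for each $j\in\M_0\setminus\{i\}$, so $\sup_n\widetilde\Pi_n^{(j)}<1$ $\P_i$-a.s., and for all sufficiently small $A_j$ one has $\tau_A^{(j)}=\infty$; hence $\tau_A=\tau_A^{(i)}$ eventually $\P_i$-a.s.\ as $\|A\|\downarrow 0$. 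The lemma preceding Lemma \ref{lemma_l_r_convergence_tau_i} then gives a.s.\ convergence of $\bigl(\sum_{t=0}^{\tau_A}c(Y_t)\bigr)/(-\log A_i)$ to $c_i/l(i)$, and Vitali again upgrades this to $L^m(\P_i)$-convergence, from which the claimed convergence of $D_i^{(c,m)}(\tau_A)$ is immediate.

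The main obstacle is the translation of the abstract $r$-quick convergence into the quantitative pathwise sandwich displayed above: one must exhibit a single integrable random variable that dominates $\tau_A^{(i)}/(-\log A_i)$ uniformly in $A_i$. Once that bound is in place, the passage to UI, to $L^m$-convergence of the stopping time, and then to $L^m$-convergence of the detection delay follows by standard dominated- and Vitali-type arguments, and part (ii) is the parallel story for $\Psi^{(i)}$ and $\upsilon_B^{(i)}$.
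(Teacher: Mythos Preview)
Your proposal is correct and is exactly the standard argument that the paper defers to \cite{Dayanik_2012}: from the $r$-quick lower bound one extracts the pathwise domination $\tau_A^{(i)}\le T_\delta+\lceil(-\log A_i)/(l(i)-\delta)\rceil$ with $\E_i[T_\delta^{\,r}]<\infty$, which makes the family $(\tau_A^{(i)}/(-\log A_i))^r$ dominated by a single $L^r(\P_i)$ variable and hence uniformly integrable; the $L^m$-convergences then follow from the a.s.\ convergence in Lemma~\ref{proposition_stopping_time_asymptotic_lemma} and the preceding lemma on $\sum c(Y_m)$ via Vitali, with the sandwich $\underline\upsilon_B^{(i)}\le\upsilon_B^{(i)}\le\overline\upsilon_B^{(i)}$ and Remark~\ref{rem:concordance-between-B-values} handling the passage between the various $B$-thresholds in part (ii). The paper itself gives no further detail, so there is nothing to compare beyond this.
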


\begin{remark} \label{remark_llr_implies_phi} In Condition
  \ref{condition_r_quickly}, (i) implies (ii) by
  (\ref{psi_greater_than_phi}). Moreover, Condition
  \ref{condition_r_quickly} holds if $\rqliminf_{n \uparrow \infty}
   ({\Lambda_n (i,j)} /n) \geq l(i,j)$ under $\P_i$ for every $i\in
  \M$ and $j \in \mathcal{M}_0 \setminus \{i\}$.
\end{remark}

\subsection{Asymptotic Optimality} \label{section_asymptotic_optimality_general}

We now prove the
asymptotic optimalities of $(\tau_A,d_A)$ and $(\upsilon_B,d_B)$ for
Problems \ref{problem_bayes_risk} and \ref{problem_invariant} under
Conditions \ref{condition_uniformly_integrable} (i) and \ref{condition_uniformly_integrable} (ii),
respectively. 

We first derive a lower bound in Lemma  \ref{lemma_lower_bound} below on the expected detection
delay under the optimal strategy.
The lower bound on the expected detection delay under the optimal strategy can be obtained
similarly to CPD and SMHT; see \cite{Baum_Veeravalli1994},
\cite{Dragalin_Tartakovsky_Veeravalli1999},
\cite{Dragalin_Tartakovsky_Veeravalli2000}, \cite{Lai2000},
\cite{Tartakovsky_Veeravalli2004} and \cite{Baron_Tartakovsky2006}.  This lower bound and
Lemma \ref{lemma_l_r_convergence_tau_i} above can be combined to obtain
asymptotic optimality for both problems.


%
%

\begin{lemma} \label{lemma_lower_bound}
For every $i \in \mathcal{M}$ and $j(i)$, we have
\begin{align*}
&\liminf_{\overline{R}_i \downarrow 0} \inf_{(\tau,d) \in \overline{\Delta}
(\overline{R})} \frac {D_i^{(c,m)}(\tau)} {\left({ \frac {c_i} {l(i)} \left|\log \left( \frac 1 {\nu_i}{\sum_{y \in \Y_{j(i)}}\overline{R}_{yi}} \right) \right|} \right)^m} \geq 1.
\end{align*}

\end{lemma}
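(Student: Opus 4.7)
The plan is to adapt the classical change-of-measure / level-crossing lower bound used in sequential multiple hypothesis testing and change-point detection (see Baum-Veeravalli 1994, Dragalin-Tartakovsky-Veeravalli 1999, and \cite{Dayanik_2012} for analogous arguments). Fix $i\in \M$, write $j:=j(i)$, and set $\epsilon := \tfrac{1}{\nu_i}\sum_{y\in \Y_{j}}\overline{R}_{yi}$; this is the quantity appearing inside the logarithm of the denominator. For a small $\eta\in(0,1)$, set $L_\epsilon := \lfloor (1-\eta)|\log \epsilon|/l(i)\rfloor$. The goal is to show that $\P_i(\tau>L_\epsilon)\to 1$ \emph{uniformly} over $(\tau,d)\in\overline{\Delta}(\overline{R})$ as $\|\overline{R}\|\downarrow 0$, and then combine this with the ergodic estimate recalled above the lemma to bound $D_i^{(c,m)}(\tau)$ from below.

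The main input is the change-of-measure identity from Proposition \ref{corollary_measure_change}: for every feasible $(\tau,d)$,
\begin{equation*}
\E_i\bigl[1_{\{d=i,\,\theta\leq \tau<\infty\}}\, e^{-\Lambda_\tau(i,j)}\bigr] \;=\; \frac{\widetilde{R}_{ji}(\tau,d)}{\nu_i} \;\leq\; \epsilon.
\end{equation*}
Decompose $\{\tau\leq L_\epsilon\}$ into (a) $\{d\neq i,\,\theta\leq\tau<\infty\}$, (b) $\{\tau<\theta\}$, and (c) $\{d=i,\,\theta\leq\tau\leq L_\epsilon\}$. For (a): since $\mu=i$ $\P_i$-a.s., the constraints in $\overline{\Delta}(\overline{R})$ give $\P_i(d=j',\theta\leq\tau<\infty) = \widetilde{R}_{ij'}(\tau,d)/\nu_i \leq \tfrac{1}{\nu_i}\sum_{y\in \Y_i}\overline{R}_{yj'}$ for every $j'\in\M\setminus\{i\}$; summing over $j'$ this vanishes as $\|\overline{R}\|\downarrow 0$. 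For (b): $\P_i(\tau<\theta)\leq \P_i(\theta>L_\epsilon)\to 0$ because $\theta<\infty$ a.s.\ and $L_\epsilon\uparrow\infty$. For (c), further split by whether $\Lambda_\tau(i,j)\leq (1-\eta/2)|\log\epsilon|$. On that event, restricting the change-of-measure bound to it gives
\begin{equation*}
\epsilon \;\geq\; \epsilon^{1-\eta/2}\,\P_i\bigl(\tau\leq L_\epsilon,\,d=i,\,\theta\leq\tau,\,\Lambda_\tau(i,j)\leq (1-\eta/2)|\log\epsilon|\bigr),
\end{equation*}
so this probability is at most $\epsilon^{\eta/2}\to 0$. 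On the complementary event, Assumption \ref{condition_a_s_convergence} and Proposition \ref{proposition_asymptotic_phi} give $\Lambda_n(i,j)/n \to l(i)$ $\P_i$-a.s., which upgrades to $\sup_{n\leq L_\epsilon}\Lambda_n(i,j)/L_\epsilon \to l(i)$; since $l(i)L_\epsilon \sim (1-\eta)|\log\epsilon| < (1-\eta/2)|\log\epsilon|$, this last probability also vanishes.

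Granting $\P_i(\tau>L_\epsilon)\to 1$ uniformly, the Cesàro convergence $\frac{1}{L_\epsilon}\sum_{t=0}^{L_\epsilon-1}c(Y_t)\to c_i$ $\P_i$-a.s.\ recalled before the lemma gives $\P_i\bigl(\sum_{t=0}^{L_\epsilon-1}c(Y_t)\geq (1-\eta)c_i L_\epsilon\bigr)\to 1$, so
\begin{equation*}
D_i^{(c,m)}(\tau) \;\geq\; \E_i\!\left[\Bigl(\sum_{t=0}^{L_\epsilon-1}c(Y_t)\Bigr)^m 1_{\{\tau>L_\epsilon\}}\right] \;\geq\; (1-o(1))\bigl((1-\eta)c_i L_\epsilon\bigr)^m.
\end{equation*}
Dividing by $\bigl((c_i/l(i))|\log\epsilon|\bigr)^m = (c_iL_\epsilon/(1-\eta))^m$ and taking $\liminf_{\|\overline{R}\|\downarrow 0}$ yields a lower bound of $(1-\eta)^{2m}$; sending $\eta\downarrow 0$ concludes. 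The main obstacle is piece (c): the change-of-measure bound controls only the part where $\Lambda_\tau(i,j)$ is small, and ruling out the complementary event—large $\Lambda_\tau$ before time $L_\epsilon$—is exactly what forces the use of the a.s.\ growth rate $l(i)$ from Assumption \ref{condition_a_s_convergence}. The remaining steps are bookkeeping with the multiple feasibility constraints and the ergodic behaviour on $\Y_i$.
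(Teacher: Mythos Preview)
Your overall architecture---reduce to showing $\P_i\{\tau>L_\epsilon\}\to 1$ uniformly over feasible strategies, then convert via the Ces\`aro limit of $\tfrac{1}{n}\sum c(Y_t)$---matches the paper's, but two of the three pieces in your decomposition of $\{\tau\le L_\epsilon\}$ are not handled correctly.

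Step (b) is simply wrong: $\P_i(\tau<\theta)\le\P_i(\theta>L_\epsilon)$ fails in general, since $\{\tau\le L_\epsilon,\,\tau<\theta\}$ does not force $\theta>L_\epsilon$ (take $\tau=1,\theta=2,L_\epsilon=10$).  One can instead bound $\P_i(\tau<\theta)\le\P(\tau<\theta)/\nu_i=\tfrac{1}{\nu_i}\sum_{j'\in\M}\widetilde R_{0j'}(\tau,d)\le\tfrac{1}{\nu_i}\sum_{j'\in\M}\sum_{y\in\Y_0}\overline R_{yj'}$ via feasibility.  But this bound, like your bound in (a), uses the columns $(\overline R_{yj'})_{y}$ for $j'\neq i$, which are \emph{not} part of $\overline R_i=(\overline R_{yi})_{y\in\Y\setminus\Y_i}$.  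So even after (b) is repaired, your argument proves the conclusion only under $\|\overline R\|\downarrow 0$, not under $\overline R_i\downarrow 0$ as the lemma is stated.

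The paper avoids both issues by a different decomposition.  It bounds $\P_i\{\tau-\theta>L\}$ (hence $\P_i\{\tau>L\}$) from below directly, via the inequality (see the proof of Lemma~A.1 in \cite{Dayanik_2012})
\[
\P_i\{\tau-\theta\le L\}\;\le\; R_i^{(1)}(\tau,d)\;+\;\frac{e^{kLl(i,j)}}{\nu_i}\,\widetilde R_{ji}(\tau,d)\;+\;\P_i\Big\{\sup_{n\le\theta+L}\Lambda_n(i,j)>kLl(i,j)\Big\},\qquad k>1.
\]
The point is that both $R_i^{(1)}(\tau,d)=\tfrac{1}{\nu_i}\sum_{j'\in\M_0\setminus\{i\}}\widetilde R_{j'i}(\tau,d)$ and $\widetilde R_{ji}(\tau,d)$ involve only the \emph{$i$-th decision column} and are therefore controlled by $\overline R_i$ alone; no assumption on the other $\overline R_{\cdot j'}$ is needed.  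The level-crossing term uses the random upper limit $\theta+L$ rather than a deterministic $L_\epsilon$, which is what makes the change-of-measure estimate on $\{d=i,\,\theta\le\tau\le\theta+L\}$ mesh with the constraint on $\widetilde R_{ji}$.  Choosing $L=\sqrt\delta\,|\log\epsilon|/l(i)$, $\gamma=\sqrt\delta\,c_i$, and $k>1$ with $k\sqrt\delta<1$, then applying Markov's inequality, finishes the proof exactly as in your last two paragraphs.
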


We now study how to set $A$ in terms of $c$ in order to achieve
asymptotic optimality in Problem \ref{problem_bayes_risk}.
We see from Proposition \ref{corollary_bound_kappa_i_and_r} and Lemma
\ref{lemma_l_r_convergence_tau_i} that the TDL's decrease faster than the EDD and are negligible when $A$ and $B$ are small. Indeed, we have,
in view of the definition of the Bayes risk in (\ref{bayes_risk_i}),
by Proposition \ref{corollary_bound_kappa_i_and_r} and Lemma
\ref{lemma_l_r_convergence_tau_i}, for any $\sigma_i > 0$ for
every $i \in \M$,
\begin{align}
u^{(c,a,m)}_{i} (\tau_{A},d_{A}) \sim  c_i^m \left( \frac {-\log A_i} { l(i)} \right)^m + \sigma_i A_i \sim c_i^m \left( \frac {-\log A_i} { l(i)} \right)^m,  \quad \textrm{as } A_i \downarrow 0. \label{sigma_expression}
\end{align}

This motivates us to choose the value of $A_i$ such that it minimizes
\begin{align}
  g^{(c_i)}_i(x) := c_i^m \left( \frac {-\log x} { l(i)} \right)^m +
  \sigma_i x \label{g_minimizer}
\end{align}
over $x \in (0,\infty)$. Hence, let
\begin{align*}
  A_i(c_i) \in \argmin_{x \in (0,\infty)}g_i^{(c_i)}(x), \quad c_i > 0.
\end{align*}
For example, $A_i(c_i) = c_i /(\sigma_i l(i))$ when $m=1$. It can be easily verified that for every $m \geq 1$ we have $A_i(c_i) \xrightarrow{\|c\| \downarrow 0} 0$ in such a way that $\log A_i(c_i) \sim \log c_i$ as $\|c\| \downarrow 0$. Hence we have
\begin{align*}
u^{(c,a,m)}_{i} (\tau_{A(c)},d_{A(c)}) \sim g_i^{(c_i)} (A_i(c_i))\sim c_i^m \left( \frac {- \log c_i} {l(i)}\right)^m \; \textrm{as } \|c\| \downarrow 0. 
\end{align*}
Consequently, it is sufficient to show that
\begin{align*}
\liminf_{\|c\| \downarrow 0} \frac {\inf_{(\tau,d) \in \Delta}
u^{(c,a,m)}_{i}(\tau,d)}{g_i^{(c_i)} (A_i(c_i))} \geq 1. 
\end{align*}

Its proof is similar to that of Proposition 3.18 of \cite{Dayanik_2012}.

\begin{proposition}[Asymptotic optimality of $(\tau_A,d_A)$ in Problem \ref{problem_bayes_risk}]
Fix $m \geq 1$ and a set of strictly positive constants $a$. Under Condition \ref{condition_uniformly_integrable} (i) or \ref{condition_r_quickly} (i) for the given $m$, the strategy $(\tau_{A(c)},d_{A(c)})$ is asymptotically optimal
as $\|c\| \downarrow 0$; that is, (\ref{optimality_bayes_risk_strong}) holds for every $i \in \M$.
\end{proposition}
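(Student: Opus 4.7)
The upper bound $u_i^{(c,a,m)}(\tau_{A(c)},d_{A(c)})\sim g_i^{(c_i)}(A_i(c_i))$ as $\|c\|\downarrow 0$ has already been derived in the discussion immediately preceding the proposition, by combining Proposition~\ref{corollary_bound_kappa_i_and_r}(i) (which bounds $R_i^{(a)}(\tau_A,d_A)\le \overline{a}_iA_i$) with Lemma~\ref{lemma_l_r_convergence_tau_i}(i) (which delivers the $L^m$-limit of $\tau_A^{(i)}/(-\log A_i)$). My plan is therefore to prove the matching lower bound
\begin{align*}
\liminf_{\|c\|\downarrow 0}\frac{\inf_{(\tau,d)\in\Delta}u_i^{(c,a,m)}(\tau,d)}{g_i^{(c_i)}(A_i(c_i))}\ge 1,
\end{align*}
to which the paragraph preceding the proposition has already reduced the entire task.

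For this, I would fix an arbitrary sequence $(\tau^{(c)},d^{(c)})\in\Delta$ and assume without loss of generality that $u_i^{(c,a,m)}(\tau^{(c)},d^{(c)})/g_i^{(c_i)}(A_i(c_i))$ stays bounded; since $g_i^{(c_i)}(A_i(c_i))\downarrow 0$, this forces $R_i^{(a)}(\tau^{(c)},d^{(c)})\to 0$ and hence $R_{yi}(\tau^{(c)},d^{(c)})\to 0$ for every $y\in\Y\setminus\Y_i$. Define $\overline{R}_{yi}^{(c)}:=\max_{y'\in\Y\setminus\Y_i}R_{y'i}(\tau^{(c)},d^{(c)})+\epsilon_c$ uniformly in $y$, with $\epsilon_c\downarrow 0$ calibrated so that $|\log\epsilon_c|\sim m|\log c_i|$ (e.g.\ $\epsilon_c:=c_i^m$). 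The resulting $\overline{R}^{(c)}$ trivially satisfies~\eqref{bounds_r_ratio} with ratio $1$, and $(\tau^{(c)},d^{(c)})\in\overline{\Delta}(\overline{R}^{(c)})$ by Remark~\ref{remark_set}. Lemma~\ref{lemma_lower_bound} then yields
\begin{align*}
D_i^{(c,m)}(\tau^{(c)})\ge (1-o(1))\left(\frac{c_i}{l(i)}\left|\log\Big(\tfrac{1}{\nu_i}\textstyle\sum_{y\in\Y_{j(i)}}\overline{R}_{yi}^{(c)}\Big)\right|\right)^m.
\end{align*}
Writing $r_c:=R_i^{(a)}(\tau^{(c)},d^{(c)})$ and exploiting $R_{yi}\le \nu_iR_i^{(a)}/\underline{a}_i$ (with $\underline{a}_i:=\min_{y}a_{yi}>0$, obtained directly from the definition \eqref{r_i_a}), one gets $\nu_i^{-1}\sum_{y\in\Y_{j(i)}}\overline{R}_{yi}^{(c)}\le K(r_c+\epsilon_c)$ for some constant $K>0$ independent of $c$. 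Therefore
\begin{align*}
u_i^{(c,a,m)}(\tau^{(c)},d^{(c)})=D_i^{(c,m)}(\tau^{(c)})+r_c\ge (1-o(1))\,h_c(r_c)\ge (1-o(1))\min_{r\ge 0}h_c(r),
\end{align*}
where $h_c(r):=(c_i/l(i))^m\bigl|\log(K(r+\epsilon_c))\bigr|^m+r$.

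The last step is the calculus identity $\min_{r\ge 0}h_c(r)\sim g_i^{(c_i)}(A_i(c_i))$ as $\|c\|\downarrow 0$. This follows because $h_c$ has the same structural form as $g_i^{(c_i)}$ in \eqref{g_minimizer}, and the constant $K$, the coefficient $\sigma_i$, and the additive slack $\epsilon_c$ enter only inside a logarithm of order $m|\log c_i|\to\infty$, contributing at most $O(\log|\log c_i|)$ corrections. Taking $\liminf$ and then the infimum over $(\tau^{(c)},d^{(c)})\in\Delta$ delivers the desired lower bound, and combining it with the upper bound gives \eqref{optimality_bayes_risk_strong}. The main obstacle I anticipate is the bookkeeping of the $o(1)$ errors---in particular the uniform control of the error term in Lemma~\ref{lemma_lower_bound} along the sequence $\overline{R}^{(c)}$ and the precise calibration of $\epsilon_c$ so as to simultaneously dominate $r_c$ in the degenerate case $r_c\equiv 0$ while keeping $|\log\epsilon_c|$ at the right order---these are routine but delicate, paralleling the treatment of Proposition~3.18 in \cite{Dayanik_2012}.
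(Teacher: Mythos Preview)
Your proposal is essentially correct and follows the same route the paper indicates (the paper itself gives no proof beyond ``Its proof is similar to that of Proposition~3.18 of \cite{Dayanik_2012}''): reduce to the lower bound, embed an arbitrary competing strategy into some $\overline{\Delta}(\overline{R}^{(c)})$ built from its own TDL's, invoke the detection-delay lower bound of Lemma~\ref{lemma_lower_bound}, and then optimize over the remaining free parameter to recover $g_i^{(c_i)}(A_i(c_i))$.

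One point deserves explicit care. Lemma~\ref{lemma_lower_bound} is stated as a $\liminf$ in $\overline{R}_i\downarrow 0$ \emph{for fixed $c$}, whereas here both $c$ and $\overline{R}^{(c)}$ move together. Citing the lemma as a black box does not immediately yield the pointwise inequality $D_i^{(c,m)}(\tau^{(c)})\ge (1-o(1))(\cdot)^m$; you must instead unwind its proof via Lemmas~\ref{lemma_lower_bound_2} and~\ref{lemma_for_lower_bound}. The two residual probabilities there---$\P_i\{\sup_{n\le \theta+L}\Lambda_n(i,j(i))>kLl(i)\}$ and $\P_i\{\min_{n\ge L}\tfrac{1}{n}\sum_{m=1}^{n-1}c(Y_m)<\sqrt{\delta}\,c_i\}$---must vanish along your joint sequence $(c,\overline{R}^{(c)})$. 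The first does not involve $c$ at all; the second is harmless provided the \emph{shape} of $c$ (the ratios $c(y)/c(y')$) stays fixed or bounded as $\|c\|\downarrow 0$, since then the event depends on $c$ only through the scale-invariant quantity $\sum c(Y_m)/(nc_i)$. You flagged this uniformity issue at the end, so you are aware of it; just be explicit that you are invoking the \emph{proof} of Lemma~\ref{lemma_lower_bound}, not its statement.

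A smaller remark on the calibration of $\epsilon_c$: what you actually need is that the constrained minimum $\min_{r\ge 0}h_c(r)$ agrees asymptotically with the unconstrained minimum $\min_{x>0}g_i^{(c_i)}(x)$ (for some $\sigma_i>0$). After the substitution $x=K(r+\epsilon_c)$ this holds as soon as $\epsilon_c$ is $o$ of the unconstrained minimizer, i.e.\ $\epsilon_c=o\big(c_i^m|\log c_i|^{m-1}\big)$, which your choice $\epsilon_c=c_i^m$ satisfies. The specific claim ``$|\log\epsilon_c|\sim m|\log c_i|$'' is not the right criterion and is in fact unnecessary; only the smallness of $\epsilon_c$ relative to the optimal $x$ matters, because the paper has already established that the asymptotic value $g_i^{(c_i)}(A_i(c_i))$ is independent of the particular constants inside the logarithm.
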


It should be remarked here that the asymptotic optimality results hold for any $\sigma_i > 0$.  However, for higher-order approximation, it is ideal to choose its value such that
\begin{align} R_i^{(a)}(\tau_A, d_A) / A_i  \xrightarrow{A_i
    \downarrow 0} \sigma_i. \label{convergence_sigma}
\end{align}
In Section  \ref{section_higher_approximation}, we achieve this value using nonlinear renewal theory.

We now show that the strategy $(\upsilon_B,d_B)$ is asymptotically optimal for Problem \ref{problem_invariant}.
It follows from Proposition \ref{proposition_feasible_strategy} that, if we set
\begin{align*}
B_{ij} (\overline{R}) := \frac {\min_{y \in \Y_j}\overline{R}_{yi}} {\nu_i}, \; \textrm{for every } i \in \M, j \in \M_0 \setminus \{i\},
\end{align*}
then we have $(\upsilon_{B(\overline{R})}, d_{B(\overline{R})}) \in \Delta (\overline{R})$
for every fixed positive constants $\overline{R} = (R_{yi})_{i \in \M, \, y \in \Y \backslash \Y_i }$.  By Lemma \ref{lemma_l_r_convergence_tau_i} (ii),
$\upsilon_{B(\overline{R})} \leq \upsilon^{(i)}_{B(\overline{R})}$ and because
$\min_{y \in \Y_{j(i)}}\overline{R}_{yi} \downarrow 0$ is equivalent to $B_{ij(i)} (\overline{R})
\downarrow 0$,
\begin{align*}
\limsup_{\overline{R}_i \downarrow 0} \frac {D_i^{(c,m)}(\upsilon_{B(\overline{R})})}
{\left( {\frac {c_i} {l(i)} |\log \left( \min_{y \in \Y_{j(i)}}\overline{R}_{yi} / \nu_i \right) |} \right)^m} = \limsup_{\overline{R}_i
\downarrow 0} \frac {D_i^{(c,m)}(\upsilon_{B(\overline{R})})} {\left( { \frac {c_i} {l(i)}|\log
B_{ij(i)} (\overline{R})|}\right)^m}  \leq 1.
\end{align*}
This together with Lemma \ref{lemma_lower_bound} shows the asymptotic optimality.

\begin{proposition}[Asymptotic optimality of $(\upsilon_B, d_B)$ in Problem \ref{problem_invariant}] \label{propositioN_uppwer_bound_fixed_error}
Fix $m \geq 1$. Under Condition \ref{condition_uniformly_integrable} (ii) or \ref{condition_r_quickly} (ii) for the given $m$, the strategy $(\upsilon_{B(\overline{R})},d_{B(\overline{R})})$ is asymptotically optimal
as $\|\overline{R}\| \downarrow 0$; that is, (\ref{optimality_invariant_strong}) holds for every $i \in \M$.
\end{proposition}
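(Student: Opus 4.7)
The plan is to combine the explicit upper bound for $D_i^{(c,m)}(\upsilon_{B(\overline{R})})$ sketched in the paragraph preceding the statement with the matching lower bound from Lemma~\ref{lemma_lower_bound}, and to check that the two bounds agree asymptotically as $\|\overline{R}\|\downarrow 0$. Fix $i \in \M$ and $j(i) \in \argmin_{j \in \M_0 \setminus \{i\}} l(i,j)$.

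First I would verify \emph{feasibility}: the parametrization $B_{ij}(\overline{R}) := \min_{y \in \Y_j}\overline{R}_{yi}/\nu_i$ satisfies the hypothesis of Proposition~\ref{proposition_feasible_strategy}, so $(\upsilon_{B(\overline{R})}, d_{B(\overline{R})}) \in \Delta(\overline{R})$. Next, for the \emph{upper bound} on $D_i^{(c,m)}(\upsilon_{B(\overline{R})})$, I would use $\upsilon_{B(\overline{R})} \leq \upsilon^{(i)}_{B(\overline{R})}$, giving $D_i^{(c,m)}(\upsilon_{B(\overline{R})}) \leq D_i^{(c,m)}(\upsilon^{(i)}_{B(\overline{R})})$. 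Under Condition~\ref{condition_uniformly_integrable}(ii) (which is implied by Condition~\ref{condition_r_quickly}(ii) through Proposition~\ref{prop_upper_bound}(ii)), Lemma~\ref{lemma_l_r_convergence_tau_i}(ii) then yields
\[
\limsup_{\overline{R}_i\downarrow 0} \frac{D_i^{(c,m)}(\upsilon_{B(\overline{R})})}{\bigl((c_i/l(i))\,|\log B_{ij(i)}(\overline{R})|\bigr)^m} \leq 1,
\]
where the concordance observed in Remark~\ref{rem:concordance-between-B-values} guarantees that $\overline{R}_i\downarrow 0$ is the same limit mode as $\overline{B}_i\downarrow 0$.

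For the \emph{lower bound}, Remark~\ref{remark_set} gives $\Delta(\overline{R}) \subset \overline{\Delta}(\overline{R})$, so Lemma~\ref{lemma_lower_bound} produces
\[
\liminf_{\overline{R}_i \downarrow 0} \inf_{(\tau,d) \in \Delta(\overline{R})} \frac{D_i^{(c,m)}(\tau)}{\bigl((c_i/l(i))\,|\log((1/\nu_i)\sum_{y \in \Y_{j(i)}}\overline{R}_{yi})|\bigr)^m} \geq 1.
\]
To conclude, I must check that the two normalizing sequences are asymptotically equivalent. By assumption~(\ref{bounds_r_ratio}),
\[
\min_{y \in \Y_{j(i)}}\overline{R}_{yi} \;\leq\; \sum_{y \in \Y_{j(i)}}\overline{R}_{yi} \;\leq\; (|\Y_{j(i)}|/\beta_i)\,\min_{y \in \Y_{j(i)}}\overline{R}_{yi},
\]
so taking logarithms and using that both quantities diverge to $-\infty$ as $\|\overline{R}\|\downarrow 0$ shows that the ratio $|\log B_{ij(i)}(\overline{R})|/|\log((1/\nu_i)\sum_{y \in \Y_{j(i)}}\overline{R}_{yi})|\to 1$. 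Combining the upper $\limsup$ and lower $\liminf$ with this equivalence delivers (\ref{optimality_invariant_strong}).

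The only nonroutine ingredient is the upper-bound convergence itself, which however is exactly what Lemma~\ref{lemma_l_r_convergence_tau_i}(ii) and the uniform-integrability (or $r$-quick) hypothesis are designed to provide. Everything else amounts to bookkeeping about the parametrization $B(\overline{R})$ and the comparability condition~(\ref{bounds_r_ratio}); the main conceptual obstacle is simply to recognize that the $\log$-scale asymptotics absorbs the polynomial-size distortion between $\min$ and $\sum$ over $\Y_{j(i)}$.
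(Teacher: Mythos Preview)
Your proposal is correct and follows essentially the same approach as the paper's own argument: feasibility via Proposition~\ref{proposition_feasible_strategy}, the upper bound from Lemma~\ref{lemma_l_r_convergence_tau_i}(ii) (using $\upsilon_{B(\overline{R})}\le \upsilon_{B(\overline{R})}^{(i)}$), and the matching lower bound from Lemma~\ref{lemma_lower_bound}. Your explicit verification that the two normalizing sequences are asymptotically equivalent via condition~(\ref{bounds_r_ratio}) is a detail the paper leaves implicit, but it is exactly the right justification.
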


\section{Convergence Results of LLR processes} 
\label{section_convergence_results}

In this section, we consider two particular cases where Assumption \ref{condition_a_s_convergence} holds with  $l(i,j)$ expressed in terms of 
the Kullback-Leibler divergence defined below.
We assume that
 $X_\theta, X_{\theta+1},\ldots$ are identically distributed on $\{\mu = i\}$ given $\theta$, for every  $i \in \M$. For the purpose of determining the limit $l(i,j)$, because each class is closed, we can assume without loss of generality that $\mathcal{Y}_{i}$ consists of a single state, say,  
\begin{align}
\mathcal{Y}_{i} = \{i\} \textrm{ with }  f_i(\cdot) \equiv f(i, \cdot), \label{def_y_identical}
\end{align}
for every $i \in \M$.  

The conditional probability of that $Y$ is absorbed by $\mathcal{Y}_i
= \{i\}$ at time $t \geq 0$, given $\{ \mu = i \}$, is
\begin{align}
\rho_t^{(i)} := \P \{ \theta = t | \mu = i\} = \left\{ \begin{array}{ll} \frac {\eta(i)} {\nu_i}, & t=0, \\\frac 1 {\nu_i}\sum_{(y_0,\ldots, y_{t-1}) \in \mathcal{Y}_{0}^t} \eta(y_0) \prod_{k=1}^{t-1} P(y_{k-1}, y_k) P(y_{t-1},i), & t\geq 1. \end{array} \right. \label{def_rho_t_i}
\end{align}
We assume the following throughout this section.
\begin{assumption} \label{assumption_rho}
For every $i \in \M$, we assume that
\begin{align}
\varrho^{(i)} := - \lim_{t \rightarrow \infty}\frac {\log \rho_t^{(i)}} t = - \lim_{t \rightarrow \infty}\frac {\log (1-\sum_{k=0}^t \rho_k^{(i)})} t \in (0, \infty] \label{exponential_tail}
\end{align}
exists. 
\end{assumption}
Here, $\varrho^{(i)} = \infty$ holds for example when $\P_i \left\{ \theta < M \right\} = 1$ for some $M <\infty$.  In a special case where the change time is geometric with parameter $p > 0$ as in \cite{Dayanik_2012}, this is satisfied with $\varrho^{(i)} = |\log (1-p)|$. Assumption \ref{assumption_rho} also holds, for example, when $\theta$ is a mixture or a sum of geometric random variables; see the examples given in Section \ref{numerics_convergence}.

\subsection{Example 1} \label{subsection_example1} Suppose that the
distribution of $X$ is identical also in the transient set
$\mathcal{Y}_{0}$; namely,
\begin{align*}
f(y,\cdot) = f(z, \cdot) =: f_0(\cdot), \quad y,z \in \mathcal{Y}_{0}.
\end{align*}
%
We denote the Kullback-Leibler divergence
of $f_i(\cdot)$ from $f_j(\cdot)$ by
\begin{align}
  q(i,j) := \int_E \left( \log \frac {f_i(x)}{f_j(x)}\right) f_i(x)
  m(\diff x), \quad  i\in\M,\;j \in \M_0 \setminus \{i\}, \label{def_q_i_j}
\end{align}
which always exists and is nonnegative.

We assume $f_i(\cdot)$ and $f_j(\cdot)$ as in \eqref{def_y_identical}
for any $i \neq j$ are distinguishable; namely, we assume the
following.
\begin{assumption} \label{assumption_finiteness_ratio} We assume $\int_{\{x\in E: f_i(x)
      \not= f_j(x)\}} f_i(x) m (\diff x)> 0$ for every $i
  \in \M$ and $j \in \M_0 \setminus \{i\}$.  This ensures that \begin{align}
  q(i,j) > 0, \quad i\in\M,\; j \in \M_0 \setminus \{i\}.
  \label{positiveness_kullback_leibler}
\end{align}
\end{assumption}

To ensure that $\int_E \big( \log \frac {f_0(x)} {f_j(x)} \big) f_i(x) m(\diff x)$ exists
for every $i\in\M$ and $j \in \M_0 \setminus \{i\}$, we further assume the following.

\begin{assumption} \label{assumption_q_i_0} For every $i \in \M$, we
  assume that $q(i,0) < \infty$.
\end{assumption}
Indeed, since $\int_E (\log
\frac {f_i(x)} {f_j(x)})_- f_i(x) m (\diff x)\le 1$ for every $i\in \M$ and $j\in
\M_0\setminus\{i\}$, 
\begin{align}
  \label{definition_q_i_j_0}
  \int_E \left( \log \frac {f_0(x)}{f_j(x)} \right) f_i(x) m (\diff x)=
  \int_E \left( \log \frac {f_i(x)}{f_j(x)} \right) f_i(x) m (\diff x) -
  \int_E \left( \log \frac {f_i(x)}{f_0(x)} \right) f_i(x) m (\diff x)
= q(i,j)-q(i,0),
\end{align}
exists by
Assumption \ref{assumption_q_i_0}.  Finally, we assume the following.

\begin{assumption} \label{condition_rho_q_finite}
For every $i \in \M$ and $j \in \M \setminus \{i\}$, we assume $\min \{ \varrho^{(j)},  q(i,j) \} < \infty$. 
\end{assumption}

We shall prove the following under Assumptions \ref{assumption_rho}-\ref{condition_rho_q_finite}.
\begin{proposition}[Limits of LLR processes in Example 1] \label{prop_convergence_example_1}
For every $i \in \M$, Assumption \ref{condition_a_s_convergence} holds with the limits
\begin{align}    
  l(i,j) &:= \left\{ 
    \begin{aligned}
      &q(i,0)+  \min_{k \in \M}  \varrho^{(k)}, && j=0 \\
      &\min\big\{q(i,j),q(i,0) + \varrho^{(j)} \big\}, && j \in
      \M\setminus\{i\}
    \end{aligned}
  \right\}  
\equiv \left\{
    \begin{aligned}
&q(i,0)+  \min_{k \in \M}  \varrho^{(k)}, && j=0 \\
   &q(i,j), &&  j \in \Gamma_i \\
      &q(i,0) + \varrho^{(j)}, && j \in \M \setminus (\Gamma_i \cup \{i\})
    \end{aligned} \right\}\label{definition_limit_llr_2},
\end{align}
where $\Gamma_i := \{ j \in \M \setminus \{i\}: q(i,j) < q(i,0) +
    \varrho^{(j)} \}$. 
\end{proposition}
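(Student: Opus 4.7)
The plan is to analyze $\Lambda_n(i,j) = \log(\widetilde{\alpha}_n^{(i)}/\widetilde{\alpha}_n^{(j)})$ under $\P_i$ by writing each $\widetilde{\alpha}_n^{(k)}$ as an explicit weighted sum over the candidate absorption time $t\in\{0,1,\ldots,n\}$ and then isolating the dominant exponential rate. Under the Example~1 assumption and \eqref{def_y_identical}, conditional on $\{\mu=k,\theta=t\}$ with $k\in\M$ the observations $X_1,\ldots,X_{t-1}$ are i.i.d.\ $f_0$ and $X_t,\ldots,X_n$ are i.i.d.\ $f_k$, so from \eqref{def_alpha} and \eqref{def_alpha_sum},
\begin{align*}
\widetilde{\alpha}_n^{(k)} = \nu_k \sum_{t=0}^n \rho_t^{(k)} \prod_{s=1}^{t-1} f_0(X_s) \prod_{s=t\vee 1}^n f_k(X_s), \quad k\in\M, \qquad \widetilde{\alpha}_n^{(0)} = \P\{\theta>n\} \prod_{s=1}^n f_0(X_s).
\end{align*}
I would fix $i\in\M$, divide each $\widetilde{\alpha}_n^{(k)}$ by $L_n^{(i)} := \prod_{s=1}^n f_i(X_s)$ so that only the ratios $f_k/f_i$ remain, and use that $\theta<\infty$ $\P_i$-a.s.\ together with the fact that $X_\theta,X_{\theta+1},\ldots$ are i.i.d.\ $f_i$ to obtain, via the strong law of large numbers,
\begin{align*}
\frac{1}{n}\sum_{s=1}^n \log\frac{f_k(X_s)}{f_i(X_s)} \xrightarrow{\P_i\text{-a.s.}} -q(i,k), \quad k\in\M_0\setminus\{i\},
\end{align*}
where the limit is $-\infty$ if $q(i,k)=\infty$ and the $f_0$-segment of length $\theta-1$ contributes only $O(1)$.

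Applying this to $\widetilde{\alpha}_n^{(i)}/L_n^{(i)}$, the $t$-th summand $\rho_t^{(i)}\prod_{s=1}^{t-1}f_0(X_s)/f_i(X_s)$ is $\P_i$-a.s.\ dominated, for any small $\varepsilon>0$ and large $t$, by $e^{-(\varrho^{(i)}+q(i,0)-\varepsilon)t+O(1)}$, so the series converges $\P_i$-a.s.\ to a finite strictly positive random variable $Z_i$ and $\frac{1}{n}\log(\widetilde{\alpha}_n^{(i)}/L_n^{(i)})\to 0$ $\P_i$-a.s. For $j=0$, Assumption~\ref{assumption_rho} combined with the elementary log-sum-exp identity
\begin{align*}
\frac{1}{n}\log\P\{\theta>n\} = \frac{1}{n}\log\sum_{k\in\M}\nu_k\Bigl(1-\sum_{t=0}^n\rho_t^{(k)}\Bigr) \longrightarrow -\min_{k\in\M}\varrho^{(k)}
\end{align*}
together with the SLLN above gives $\frac{1}{n}\log(\widetilde{\alpha}_n^{(0)}/L_n^{(i)})\to -q(i,0)-\min_{k\in\M}\varrho^{(k)}$, whence $\Lambda_n(i,0)/n\to l(i,0)$. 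For $j\in\M\setminus\{i\}$, let $T_t^{(j)}$ denote the $t$-th term of $\widetilde{\alpha}_n^{(j)}/L_n^{(i)}$; for $t=\lfloor un\rfloor$ with $u\in[0,1]$, the SLLN together with Assumption~\ref{assumption_rho} yields $\P_i$-a.s.
\begin{align*}
\frac{1}{n}\log T_{\lfloor un\rfloor}^{(j)} \longrightarrow r^{(j)}(u) := -(\varrho^{(j)}+q(i,0))u - q(i,j)(1-u),
\end{align*}
and $\max_{u\in[0,1]}r^{(j)}(u)=-\min\{q(i,j),q(i,0)+\varrho^{(j)}\}=-l(i,j)$, the maximum being attained at $u=0$ or $u=1$ according as $q(i,j)\lessgtr q(i,0)+\varrho^{(j)}$. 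Sandwiching $\sum_{t=0}^n T_t^{(j)}$ between $\max_t T_t^{(j)}$ and $(n+1)\max_t T_t^{(j)}$ and taking logarithms gives $\frac{1}{n}\log(\widetilde{\alpha}_n^{(j)}/L_n^{(i)})\to -l(i,j)$, and therefore $\Lambda_n(i,j)/n\to l(i,j)$ $\P_i$-a.s.

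The main obstacle will be making the sandwich step rigorous, because the SLLN provides only pointwise a.s.\ convergence of $\frac{1}{n}\log T_t^{(j)}$ at each scaled index $t=\lfloor un\rfloor$, whereas the maximum in the sandwich runs over all $t\in\{0,1,\ldots,n\}$ simultaneously. I would address this by partitioning the index range into the three regimes $t\le\varepsilon n$, $\varepsilon n<t<(1-\varepsilon)n$, and $t\ge(1-\varepsilon)n$, controlling the middle regime by the continuity of $r^{(j)}$ in $u$ together with a uniform-in-$u$ SLLN estimate, bounding the two boundary regimes by the endpoint rates $r^{(j)}(0)=-q(i,j)$ and $r^{(j)}(1)=-\varrho^{(j)}-q(i,0)$, and letting $\varepsilon\downarrow 0$ at the end. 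The finiteness of $\min_{j\in\M_0\setminus\{i\}} l(i,j)$ required by Assumption~\ref{condition_a_s_convergence} follows immediately from $q(i,0)<\infty$ (Assumption~\ref{assumption_q_i_0}) and Assumption~\ref{condition_rho_q_finite}.
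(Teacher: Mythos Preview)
Your starting point coincides with the paper's: the explicit formulas for $\widetilde\alpha_n^{(k)}$ and $\widetilde\alpha_n^{(0)}$ you write down are exactly Lemma~\ref{lemma_alpha_i}, and your treatment of $\widetilde\alpha_n^{(i)}$ (series convergent to a finite positive limit) and of $j=0$ (log-sum-exp on $\P\{\theta>n\}$, your display matching~\eqref{rho_convergence}) is the same as the paper's. The genuine difference is how you handle $j\in\M\setminus\{i\}$. The paper does not run a Laplace-type sandwich over $t$; instead it first decides which endpoint dominates via the set $\Gamma_i$, pulls out the corresponding random-walk part $\sum_l h_{ij}(X_l)$, and packages the remainder into the quantities $L_n^{(j)}$ or $K_n^{(j)}$ of~\eqref{definition_K}. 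The convergence $L_n^{(j)}/n\to[q(i,j)-q(i,0)-\varrho^{(j)}]_+$ and its $K$-analogue are then proved once and for all by a general lemma on processes of the form $\frac1n\log\big(c+\sum_{l\le n}e^{l(\Phi_l+\delta_l)}\big)$ (Lemma~\ref{lemma_convergence_series}). What the paper's route buys is reusability---the same $L_n^{(j)},K_n^{(j)}$ and the same lemma drive the $L^r$ upgrades (Lemma~\ref{lemma_limit_l}(vi)--(vii)) and Example~2---and a clean treatment of the degenerate case $\varrho^{(j)}=\infty$, handled by a separate truncation argument at the end of the proof of Lemma~\ref{lemma_limit_l}. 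Your Laplace sandwich is more direct and avoids the $\Gamma_i$ case split, which is appealing.

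Two remarks on your execution. First, the ``main obstacle'' you flag is lighter than you suggest: since $r^{(j)}$ is affine in $u$, no three-region partition is needed; once the SLLN gives $S_t/t\to\mu$ a.s.\ with $\mu$ finite, one has $\sup_{0\le t\le n}|S_t-\mu t|/n\to 0$ a.s.\ (split at a random $N$ beyond which $|S_t-\mu t|\le\varepsilon t\le\varepsilon n$, and bound the finitely many earlier terms by $O(1)/n$), which makes $\frac1n\log T_t^{(j)}=r^{(j)}(t/n)+o(1)$ uniformly in $t$ and closes the sandwich immediately. Second, and more substantively, your argument as written assumes $q(i,j)<\infty$ and $\varrho^{(j)}<\infty$ so that $r^{(j)}$ is finite-valued; Assumptions~\ref{assumption_q_i_0} and~\ref{condition_rho_q_finite} allow one of them to be infinite, and then your linear $r^{(j)}$ collapses to $-\infty$ on an open subinterval and the uniform-SLLN step above fails for the corresponding partial sums. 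You would need a separate (easy) argument in those cases---essentially showing that only the terms near the surviving endpoint contribute---mirroring the truncation the paper performs for $\varrho^{(j)}=\infty$.
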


\begin{remark} \label{remark_regarding_l} 
\begin{enumerate}
\item  Assumptions  \ref{assumption_q_i_0} and \ref{condition_rho_q_finite} ensure that
\begin{align*}
q(i,j) &< \infty, \quad j \in \Gamma_i, \\
q(i,0) + \varrho^{(j)} &< \infty, \quad j \in \M \setminus (\Gamma_i \cup \{i\}).
\end{align*}
\item Assumption \ref{assumption_finiteness_ratio} guarantees that
  $l(i,j) > 0$ for every $i \in \M$ and $j \in \M_0 \setminus \{i\}$.  In particular, (1) ensures $0 <  l(i,j) < \infty$ for any $j \in \M \setminus \{i\}$. Hence, $0 < l(i) < \infty$. 
\item By \eqref{definition_limit_llr_2}, we can choose $j(i) \in \{0\} \cup \Gamma_i$.  If $j(i) = 0$, we must have $\min_{k \in \M} \varrho^{(k)} < \infty$.
\item We assume in Section \ref{section_higher_approximation} for
  higher-order approximations that,  for every $i\in \M$, there is a unique $j(i)\in
  \M_0\setminus\{i\}$
  such that $l(i) = l(i,j(i)) = \min_{j \in \M_0 \setminus\{i\}}
  l(i,j)$.  Contrary to the case $\theta$ is
  geometric as in \cite{Dayanik_2012}, the uniqueness of $j(i)$ does
  not exclude the case $j(i) = 0$.  In particular, for the case $j(i)
  = 0$, the uniqueness implies that $\varrho^{(k)}$ is uniquely
  minimized by $k=i$.  On the other hand, if $j(i) \in \Gamma_i$, then
  $l(i) < l(i,0)$, $q(i,j(i)) <\min_{j \in \M} (q(i,0) +
  \varrho^{(j)})$, and $\Gamma_i\not= \varnothing$.

\end{enumerate}
\end{remark}

In order to show Proposition \ref{prop_convergence_example_1}, we first simplify the LLR process as  in \eqref{eq:log-likelihood-ratio-processes}. Define, for each $j \in \M$,
\begin{align}\label{definition_K}
\begin{split}
L_n^{(j)} &:= \log \left( \rho_0^{(j)} + \sum_{k=1}^n
   \rho_k^{(j)}  \prod_{l=1}^{k-1} \frac {f_0(X_l)} {f_j(X_l)} \right),  \\
K_n^{(j)} &:= \log \left( \frac {\rho_0^{(j)}} {\rho_n^{(j)}} \prod_{k=1}^n \frac {f_j(X_k)} {f_0(X_k)} + \sum_{k=1}^n
    \frac {\rho_k^{(j)}} {\rho_n^{(j)}} \prod_{m=k}^n \frac {f_j (X_m)} {f_0(X_m)} \right) = -\log \rho_n^{(j)} + \sum_{k=1}^n \log \frac {f_j(X_k)} {f_0(X_k)}  + L_n^{(j)}. 
\end{split}
\end{align}

\begin{lemma} \label{lemma_example_1_lambda} Fix $i \in \M$.
For any $n \geq 1$,
\begin{align*}
\Lambda_n(i,0) &=  \sum_{k=1}^n \log \frac { f_i(X_k)} { f_0(X_k)}  + L_n^{(i)} -  \log \Big[ \sum_{j \in \M} \nu_j \big(1- \sum_{t=0}^n \rho_t^{(j)} \big)  \Big] + \log \nu_i 
\end{align*}
and for  $j \in \M \backslash \{i\}$
\begin{align*}
\Lambda_n(i,j) &= \sum_{k=1}^n \log \frac { f_i(X_k)} { f_j(X_k)}  + L_n^{(i)} - L_n^{(j)} +  \log \nu_i - \log \nu_j  \\ &= -\log \rho_n^{(j)} + \sum_{k=1}^n \log \frac { f_i(X_k)} { f_0(X_k)}  + L_n^{(i)} - K_n^{(j)} +\log \nu_i - \log \nu_j .
\end{align*}

\end{lemma}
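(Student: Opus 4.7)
The plan is to get closed-form expressions for the unnormalised posteriors $\widetilde{\alpha}_n^{(i)}$ for $i\in \M_0$ by exploiting the special structure in Example 1: each $\mathcal{Y}_i=\{i\}$ for $i\in\M$ is a single absorbing state (so $P(i,i)=1$), and all states in $\mathcal{Y}_0$ emit observations with the same density $f_0$. Once those two expressions are in hand, the formulas for $\Lambda_n(i,0)=\log(\widetilde\alpha_n^{(i)}/\widetilde\alpha_n^{(0)})$ and $\Lambda_n(i,j)=\log(\widetilde\alpha_n^{(i)}/\widetilde\alpha_n^{(j)})$ fall out by taking logarithms and matching terms with the definitions in \eqref{definition_K}.

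First I would compute $\widetilde\alpha_n^{(i)}$ for $i\in\M$. By \eqref{def_alpha_sum} and \eqref{def_alpha}, $\widetilde\alpha_n^{(i)}=\alpha_n(X_1,\dots,X_n,i)$. Since the classes are closed and disjoint, any path $(y_0,\dots,y_{n-1},i)$ contributing a nonzero summand must stay in $\mathcal{Y}_0$ up to some time $\theta=k\in\{0,\dots,n\}$ and be identically equal to $i$ afterwards. Partitioning the sum over $k$, using $P(i,i)=1$ and the definition \eqref{def_rho_t_i} of $\rho_k^{(i)}$, a direct computation gives
\begin{align*}
\widetilde\alpha_n^{(i)}
=\nu_i\prod_{l=1}^n f_i(X_l)\cdot\Big(\rho_0^{(i)}+\sum_{k=1}^n\rho_k^{(i)}\prod_{l=1}^{k-1}\frac{f_0(X_l)}{f_i(X_l)}\Big)
=\nu_i\,e^{L_n^{(i)}}\prod_{l=1}^n f_i(X_l),
\end{align*}
where the last equality is exactly the definition of $L_n^{(i)}$ in \eqref{definition_K}.

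Next I would compute $\widetilde\alpha_n^{(0)}=\sum_{y\in\mathcal{Y}_0}\alpha_n(X_1,\dots,X_n,y)$. A nonzero contribution requires $(y_0,\dots,y_{n-1},y)\in\mathcal{Y}_0^{n+1}$, so every factor $f(y_k,X_k)$ equals $f_0(X_k)$, and
\begin{align*}
\widetilde\alpha_n^{(0)}=\prod_{l=1}^n f_0(X_l)\cdot\P\{\theta>n\}
=\prod_{l=1}^n f_0(X_l)\cdot\sum_{j\in\M}\nu_j\bigl(1-\sum_{t=0}^n\rho_t^{(j)}\bigr),
\end{align*}
using the decomposition $\P\{\theta>n\}=\sum_{j\in\M}\nu_j\P\{\theta>n\mid\mu=j\}$.

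With these two formulas, $\Lambda_n(i,0)$ and the first expression for $\Lambda_n(i,j)$ ($j\in\M\setminus\{i\}$) follow immediately by taking logarithms of the ratio. To obtain the second expression for $\Lambda_n(i,j)$, I would invert the identity in \eqref{definition_K} to write
\begin{align*}
L_n^{(j)}=K_n^{(j)}+\log\rho_n^{(j)}+\sum_{k=1}^n\log\frac{f_0(X_k)}{f_j(X_k)},
\end{align*}
substitute into the first expression, and collect the $\log f_j$ terms so that only $\log(f_i/f_0)$ remains in the likelihood sum. No real obstacle is expected beyond careful path-bookkeeping in step one and correctly recognising the resulting sums as the compact quantities $L_n^{(j)}$ and $K_n^{(j)}$; the main subtlety is tracking the $k=0$ term separately, which is accommodated by the convention $\rho_0^{(i)}=\eta(i)/\nu_i$.
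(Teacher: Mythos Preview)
Your proposal is correct and follows essentially the same approach as the paper: the paper first establishes the closed-form expressions for $\widetilde{\alpha}_n^{(i)}$ (stated as a separate auxiliary lemma) by the same partition over the absorption time, obtains $\widetilde{\alpha}_n^{(0)}$ in the same way, and then takes logarithms of the ratios. The only cosmetic difference is that for the second expression of $\Lambda_n(i,j)$ the paper refactors the ratio $\widetilde{\alpha}_n^{(i)}/\widetilde{\alpha}_n^{(j)}$ directly to exhibit $K_n^{(j)}$, whereas you substitute the defining identity $L_n^{(j)}=K_n^{(j)}+\log\rho_n^{(j)}+\sum_k\log\frac{f_0(X_k)}{f_j(X_k)}$ into the first expression; the two are algebraically identical.
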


By this lemma, each LLR process admits a decomposition
\begin{align*}
  \Lambda_n(i,j)=  \sum_{l=1}^n h_{ij}(X_l) + \epsilon_n(i,j), \quad j\in \M_0\setminus \{i\},
\end{align*}
where
\begin{align}
   h_{ij}(x) &:= \left\{
    \begin{aligned}
&\log \frac {f_i(x)}{f_0(x)} + \min_{k \in \M}  \varrho^{(k)}, && j = 0\\
      &\log \frac {f_i(x)}{f_j(x)}, && j \in \Gamma_i \\
&\log \frac {f_i(x)}{f_0(x)} + \varrho^{(j)}, && j \in \M \setminus (\Gamma_i \cup\{i\})
    \end{aligned} \right\}, \quad x \in E, \nonumber \\ \label{definition_epsilon}
  \epsilon_n(i,j) &:= \left\{
    \begin{aligned}
      &L^{(i)}_n  -   \log \Big[ \sum_{j \in \M} \nu_j \big(1- \sum_{t=0}^n \rho_t^{(j)} \big)  \Big] - n \min_{k \in \M}  \varrho^{(k)}+ \log \nu_i, && j=0 \\
      &L_n^{(i)} - L_n^{(j)}+\log \nu_i - \log \nu_j, && j \in \Gamma_i \\
      &L_n^{(i)} - K_n^{(j)}+ \log \nu_i - \log \nu_j - \log \rho^{(j)}_n - n \varrho^{(j)}, && j \in
      \M\setminus (\Gamma_i \cup \{i\})
    \end{aligned} \right\}, \quad n\ge 1.
\end{align}
Here notice that $\varrho^{(j)} < \infty$ for $j \in \M \setminus (\Gamma_i \cup \{i\})$ by Assumption \ref{condition_rho_q_finite}.


We explore the convergence  for $ {(\sum_{l=1}^n h_{ij}(X_l))}/n$ and $\epsilon_n(i,j)/n$ separately.  For $i \in \M$ and $j
  \in \M_0 \setminus \{i\}$, because $\theta$ is an a.s.\ finite random variable, a direct application of the strong law of large number (SLLN) leads to
\begin{align}
\frac 1 n {\sum_{l=1}^n h_{ij}(X_l)}
  \xrightarrow[n \uparrow \infty]{\text{$\P_i$-a.s.}} l(i,j).\label{convergence_h}
\end{align}

We now show that $\epsilon_n(i,j)$ in \eqref{definition_epsilon}
converges almost surely to
zero.  

%

\begin{lemma} \label{lemma_limit_l} 

For every $i \in \M$, we have the followings under $\P_i$.
  \begin{enumerate}
  \item[(i)] $L_n^{(i)}/n \xrightarrow{n \uparrow \infty} 0$ a.s.
  \item[(ii)]  $L_n^{(j)}/n \xrightarrow{n \uparrow \infty}
    \left[q(i,j)-q(i,0)-\varrho^{(j)} \right]_+$ a.s.\ for every $j \in
    \M\setminus \{i\}$.
  \item[(iii)]  $K_n^{(j)}/n \xrightarrow{n \uparrow \infty}
    \left[q(i,j)-q(i,0)-\varrho^{(j)} \right]_-$ a.s.\ for every $j \in
    \M\setminus (\Gamma_i \cup\{i\})$.
  \item[(iv)]  $L_n^{(i)}$ converges a.s.\ as $n \uparrow \infty$ to an a.s.
    finite random variable $L_\infty^{(i)}$.
  \item[(v)]  $L_n^{(j)}$ converges a.s.\ as $n \uparrow \infty$ to an a.s.
    finite random variable $L_\infty^{(j)}$ for every $j \in
    \Gamma_i$.
  \item[(vi)]  For every $j\in \M$, $(|L^{(j)}_n/n|^r)_{n\ge 1}$ is
    uniformly integrable for every $r\ge 1$, if
    \begin{align}
       \int_{E}\frac {f_0(x)} {f_j(x)} f_0(x) m (\diff x)  <\infty \quad \text{and}
      \quad \int_{E}\frac {f_0(x)} {f_j(x)} f_i(x) m (\diff x)  <\infty. \label{eq:sufficient-for-UI-of-L-average}
    \end{align}
  \item[(vii)]  For every $j\in \M \setminus (\Gamma_i \cup \{i\})$, $(|K^{(j)}_n/n|^q)_{n\ge 1}$ is
    uniformly integrable for every $0\le q \le r$, if
    (vi) holds, and
    \begin{align}
      \int_{E}\left| \frac {f_j(x)} {f_0(x)}\right|^r f_0(x) m (\diff x)  <\infty \quad \text{and} \quad  \int_{E}\left| \frac {f_j(x)} {f_0(x)}\right|^r f_i(x) m (\diff x)  <\infty, \quad \textrm{for some } r \geq 1. \label{eq:sufficient-for-UI-of-K-average}
    \end{align}
  \end{enumerate}
\end{lemma}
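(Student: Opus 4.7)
The plan breaks the seven claims into three groups driven by one asymptotic observation. Set $Z_l := f_0(X_l)/f_j(X_l)$ and $a_k^{(j)} := \rho_k^{(j)} \prod_{l=1}^{k-1} Z_l$, so the sum inside the log defining $L_n^{(j)}$ is $\rho_0^{(j)} + \sum_{k=1}^n a_k^{(j)}$. The first step is to show
\[
\frac{1}{k}\log a_k^{(j)} \xrightarrow{\P_i\text{-a.s.}} \alpha_j := q(i,j) - q(i,0) - \varrho^{(j)},
\]
combining Assumption \ref{assumption_rho} (giving $-\log \rho_k^{(j)}/k \to \varrho^{(j)}$) with the SLLN applied past the a.s.-finite change time $\theta$: since $X_l \sim f_i$ for $l \geq \theta$, identity \eqref{definition_q_i_j_0} yields $\frac{1}{k} \sum_{l=1}^{k-1} \log Z_l \to q(i,j) - q(i,0)$ $\P_i$-a.s.

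Parts (iv), (v), and (i) follow directly: $\alpha_i = -\varrho^{(i)} < 0$, and $\alpha_j < 0$ for $j \in \Gamma_i$ by definition, so $a_k^{(j)}$ decays exponentially $\P_i$-a.s.\ and $\rho_0^{(j)} + \sum_k a_k^{(j)}$ converges a.s.\ to an a.s.-finite positive random variable (positivity by Assumption \ref{finiteness_ratio}); dividing the log by $n$ immediately gives (i). For (ii), the case $j \in \Gamma_i$ is handled by (v), while for $j \in \M \setminus (\Gamma_i \cup \{i\})$ we have $\alpha_j \geq 0$ and I would sandwich $L_n^{(j)}$ between $\log a_n^{(j)}$ (trivially $\leq L_n^{(j)}$) and $\log(\rho_0^{(j)} + n \max_{k \leq n} a_k^{(j)})$; both ends divided by $n$ tend a.s.\ to $\alpha_j$. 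Part (iii) is then immediate from the identity $K_n^{(j)}/n = -(\log \rho_n^{(j)})/n + \frac{1}{n} \sum_{k=1}^n \log(f_j/f_0)(X_k) + L_n^{(j)}/n$ together with Assumption \ref{assumption_rho}, the SLLN, and (ii), since $\varrho^{(j)} + (q(i,0) - q(i,j)) + \alpha_j = 0 = [\alpha_j]_-$ on $\{j \notin \Gamma_i \cup \{i\}\}$.

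For (vi) I would establish $\sup_n \E_{\P_i}[|L_n^{(j)}/n|^{r+1}] < \infty$, which combined with the a.s.\ limits from (i)--(ii) yields UI of $|L_n^{(j)}/n|^r$. The upper bound $L_n^{(j)} \leq \log 2 + \log n + \sum_{l=1}^n \log^+ Z_l$ together with the first hypothesis in \eqref{eq:sufficient-for-UI-of-L-average} (which gives $\E_{\P_i}[Z_l] \leq C$ uniformly in $l$) implies $\E_{\P_i}[(\log^+ Z_l)^s] < \infty$ for every $s \geq 1$ via $\log^+ z \leq z^\epsilon/\epsilon$ and Jensen on $z \mapsto z^\epsilon$ with $\epsilon \in (0,1]$; the triangle inequality in $L^{r+1}$ then bounds the partial average. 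For the lower tail, picking any $k_0$ with $\rho_{k_0}^{(j)} > 0$ gives $L_n^{(j)} \geq \log \rho_{k_0}^{(j)} + \sum_{l=1}^{k_0 - 1} \log Z_l$, and finiteness of $\E_{f_0}[Z_l^{-\epsilon}] = \int f_j^\epsilon f_0^{1-\epsilon} \, dm \leq 1$ by H\"older (and a similar bound under $f_i$) controls this finite-length sum in every $L^p$. Part (vii) uses the same decomposition of $K_n^{(j)}$ as in (iii) together with (vi), with hypothesis \eqref{eq:sufficient-for-UI-of-K-average} providing the $L^r$-control of $\log(f_j/f_0)(X_k)$ needed for the middle term. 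The main obstacle I anticipate lies exactly in these last two parts: while the a.s.\ convergences are routine, matching the integrability hypotheses to the exponents needed to upgrade first-moment bounds on $Z_l$ to finite $L^{r+\delta}$ bounds on $\log Z_l$, and uniformly controlling the lower tail of $L_n^{(j)}/n$, is where the delicate estimates live.
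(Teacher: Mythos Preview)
Your approach is essentially the same as the paper's: the paper packages the core computation into an auxiliary lemma (Lemma~\ref{lemma_convergence_series}, whose proof is deferred to \cite{Dayanik_2012}) by setting $\xi_k = e^{-\varrho^{(j)}} f_0(X_k)/f_j(X_k)$ and recognizing $L_n^{(j)}$ as $\psi_{n-1}$ with $c=\rho_0^{(j)}+\rho_1^{(j)}$, but the content---exponential rate $\alpha_j$ for $a_k^{(j)}$, root test for (iv)--(v), sandwich for (ii), and the $K$-decomposition for (iii) and (vii)---matches yours. One minor structural difference: the paper treats the case $\varrho^{(j)}=\infty$ separately via a truncation $L_n^{(j,M)}$, whereas your argument absorbs it uniformly (since then $\alpha_j=-\infty$).

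There is, however, one genuine soft spot in your part (vi). Your lower-tail step asserts ``a similar bound under $f_i$'' for $\E_{f_i}[Z_l^{-\epsilon}]=\int (f_j/f_0)^\epsilon f_i\,dm$, but H\"older alone does \emph{not} control this integral from the hypotheses in \eqref{eq:sufficient-for-UI-of-L-average}; at best Jensen gives $\E_{f_i}[(f_j/f_0)^\epsilon]\le (\E_{f_i}[f_j/f_0])^\epsilon$, and $\int (f_j/f_0)f_i\,dm$ is not assumed finite. The paper avoids this entirely: since it takes $c=\rho_0^{(j)}+\rho_1^{(j)}>0$, one has the deterministic bound $L_n^{(j)}\ge \log c$ for all $n\ge 1$, so $(L_n^{(j)}/n)_-\le |\log c|/n$ and no integrability of $\log Z_l$ is needed for the lower tail. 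If you adopt $k_0\in\{0,1\}$ (so the sum $\sum_{l=1}^{k_0-1}\log Z_l$ is empty) your argument goes through; your general-$k_0$ version would require an extra hypothesis.
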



By the characterization of $\epsilon_n(i,j)$ in
(\ref{definition_epsilon}) and Lemma \ref{lemma_limit_l} (i)-(iii),
\begin{align*}
 {\epsilon_n(i,j)} /n \xrightarrow{\P_i-a.s.} 0, \quad i \in \M, j\in \M\setminus\{i\}.
\end{align*}
This also holds when $j = 0$ because 
\begin{align}-  \frac 1 n \log \Big[ \sum_{j \in \M} \nu_j \big(1- \sum_{t=0}^n \rho_t^{(j)}\big) \Big]   \xrightarrow{n \uparrow \infty} \min_{j \in \mathcal{M}} \varrho^{(j)}. \label{rho_convergence}
\end{align}
Indeed, the left-hand side of \eqref{rho_convergence} equals
\begin{align*}
  - \frac 1 n \log \big[ \sum_{j \in \M} \exp \big( \log \nu_j + \log
  \big(1- \sum_{t=0}^n \rho_t^{(j)} \big) \big) \big] = - \frac 1 n
  \log \big( \sum_{j \in \M}e^{-n A_j(n)} \big),
\end{align*}
where $A_j(n) := - \frac 1 n \big( \log \nu_j + \log \big(1- \sum_{t=0}^n \rho_t^{(j)} \big) \big)$.
Because $A_j(n) \rightarrow \varrho^{(j)}$ by Assumption \ref{assumption_rho} and by Lemma \ref{lemma_convergence_min_to_mu}, we have \eqref{rho_convergence}.
This together with \eqref{convergence_h} shows Proposition \ref{prop_convergence_example_1}.  

The a.s.\ convergence can be extended to the $L^r (\P_i)$-convergence
for $r \geq 1$ as well, under additional integrability conditions.
Firstly, as in Lemma 4.3 of \cite{Dayanik_2012}, for every $i \in \M$,
$j \in \M_0 \setminus \{i\}$ and $r \geq 1$, we have $(1/n)
{\sum_{l=1}^n h_{ij}(X_l)} \xrightarrow[n \uparrow \infty]{L^r(\P_i)}
l(i,j)$, if
\begin{align}
  \int_E \left| h_{ij}(x) \right|^r f_0(x) m(\diff x)<\infty \quad
  \text{and} \quad \int_E \left| h_{ij}(x) \right|^r f_i(x) m(\diff x)<\infty. \label{condition_convergence_h}
\end{align}
Here, (\ref{condition_convergence_h}) holds if the following condition
holds.

\begin{condition} \label{condition_convergence_h_l_r} Given $i \in
  \M$, $j\in \M_0\setminus\{i\}$, and $r \geq 1$, suppose that
  \begin{align*} 
    \begin{aligned}
      &\int_E \left| \log \frac{f_i(x)}{f_j(x)}\right|^r f_0(x) m(\diff x)
      <\infty \quad \text{and} \quad \int_E \left| \log \frac{f_i(x)}{f_j(x)}\right|^r f_i(x) m(\diff x)<\infty && \text{if}\quad
      j\in \Gamma_i,\\
      &\int_E \left| \log \frac{f_i(x)}{f_0(x)}\right|^r f_0(x) m(\diff x)
      <\infty \quad \text{and} \quad \int_E \left| \log \frac{f_i(x)}{f_0(x)}\right|^r f_i(x) m(\diff x) <\infty && \text{if} \quad
      j \in \M_0 \backslash \Gamma_i.
    \end{aligned}
  \end{align*}  
In addition, when $j = 0$,  we assume $\min_{j \in \M}\varrho^{(j)} < \infty$.
\end{condition}

On the other hand, by Lemma \ref{lemma_limit_l}, $\epsilon_n(i,j)/n
  \rightarrow 0$ as $n \uparrow \infty$ in $L^r(\P_i)$ under Condition \ref{condition_convergence_l_r_epsilon} below.  Notice in Lemma \ref{lemma_limit_l} (vi) that in order for $L_n^{(i)}$ to converge in $L^r(\P_i)$ to zero, it is sufficient to have
\begin{align}
 \int_E \frac {f_0 (x)} {f_i(x)} f_0(x) m (\diff x) < \infty, \label{condition_l_r_l_i}
\end{align}
because $\int_E \frac {f_0 (x)} {f_i(x)} f_i(x) m (\diff x) = \int_E f_0 (x)  m (\diff x)= 1 < \infty$.  

\begin{condition} \label{condition_convergence_l_r_epsilon} Given $i
  \in \M$, $j\in \M \setminus\{i\}$ and $r \geq 1$, we suppose that
 (\ref{eq:sufficient-for-UI-of-L-average}) and   (\ref{condition_l_r_l_i}) hold, and, if $j \in \M \setminus \Gamma_i$,  (\ref{eq:sufficient-for-UI-of-K-average})   holds for the given $r$.
\end{condition}



In summary, we have the following $L^r$-convergence results.

\begin{proposition} \label{proposition_convergence_llr} For every $i
  \in \M$ and $j\in \M_0\setminus\{i\}$, we have $\Lambda_n(i,j)/n \rightarrow l(i,j)$ as $n \uparrow \infty$ in $L^r(\P_i)$ for some $r \geq 1$ if
Conditions \ref{condition_convergence_h_l_r} and
\ref{condition_convergence_l_r_epsilon} hold for the given $r$.


\end{proposition}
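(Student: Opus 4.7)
The plan is to reduce the proposition to two already-established $L^r$ convergence facts via the additive decomposition of the LLR process provided by Lemma \ref{lemma_example_1_lambda}, and then combine them with Minkowski's inequality in $L^r(\P_i)$.

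First, I would rewrite $\Lambda_n(i,j)/n$ using the decomposition
\begin{align*}
  \frac{\Lambda_n(i,j)}{n} \;=\; \frac{1}{n}\sum_{l=1}^n h_{ij}(X_l) \;+\; \frac{\epsilon_n(i,j)}{n},
\end{align*}
where $h_{ij}$ and $\epsilon_n(i,j)$ are as in (\ref{definition_epsilon}). Then, by Minkowski's inequality,
\begin{align*}
  \Bigl\| \frac{\Lambda_n(i,j)}{n} - l(i,j) \Bigr\|_{L^r(\P_i)}
  \;\le\; \Bigl\| \frac{1}{n}\sum_{l=1}^n h_{ij}(X_l) - l(i,j) \Bigr\|_{L^r(\P_i)}
  \;+\; \Bigl\| \frac{\epsilon_n(i,j)}{n} \Bigr\|_{L^r(\P_i)},
\end{align*}
so it suffices to show that each of the two terms on the right vanishes as $n\uparrow\infty$.

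Second, for the ergodic term $(1/n)\sum_{l=1}^n h_{ij}(X_l)$, I would invoke the $L^r$ version of the SLLN that the paper has already flagged just before the proposition: the display (\ref{condition_convergence_h}) combined with the sentence immediately after it shows that $(1/n)\sum_{l=1}^n h_{ij}(X_l) \to l(i,j)$ in $L^r(\P_i)$ whenever $\int_E |h_{ij}|^r f_0 \, m(\diff x)$ and $\int_E |h_{ij}|^r f_i \, m(\diff x)$ are finite. By inspection of the three cases of $h_{ij}$ in (\ref{definition_epsilon})---which only differ from $\log(f_i/f_j)$ or $\log(f_i/f_0)$ by an additive constant---Condition \ref{condition_convergence_h_l_r} (with the added convention that $\min_{k \in \M}\varrho^{(k)}<\infty$ when $j=0$) is exactly what is needed to conclude this first piece.

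Third, for the remainder $\epsilon_n(i,j)/n$, I would break the error into the components appearing in (\ref{definition_epsilon}) and apply Lemma \ref{lemma_limit_l}. Specifically, in all three cases the term $L_n^{(i)}/n$ appears; its $L^r$-convergence to $0$ follows from parts (iv) and (vi) of Lemma \ref{lemma_limit_l} together with the integrability (\ref{condition_l_r_l_i}) (recalling that $\int f_0^2/f_i \, m(\diff x) = \int (f_0/f_i) f_i \, m(\diff x)=1$, so only one of the two moment conditions in (\ref{eq:sufficient-for-UI-of-L-average}) is nontrivial). For $j\in\Gamma_i$, the additional term $L_n^{(j)}/n$ also tends to $0$ in $L^r$ by part (v) and (vi) of Lemma \ref{lemma_limit_l}, which in turn rely on (\ref{eq:sufficient-for-UI-of-L-average}); for $j \in \M\setminus(\Gamma_i\cup\{i\})$, the residual $(K_n^{(j)} + \log\rho_n^{(j)})/n + \varrho^{(j)}$ vanishes in $L^r$ by Lemma \ref{lemma_limit_l} (iii) and (vii), which require (\ref{eq:sufficient-for-UI-of-K-average}) in addition to (\ref{eq:sufficient-for-UI-of-L-average}); and for $j=0$, the deterministic term $-(1/n)\log[\sum_{j\in\M}\nu_j(1-\sum_{t=0}^n \rho_t^{(j)})]$ converges to $\min_{k\in\M}\varrho^{(k)}$ by the argument already displayed in the text following (\ref{rho_convergence}) (Assumption \ref{assumption_rho} plus Lemma \ref{lemma_convergence_min_to_mu}), hence in $L^r$ as well because it is deterministic. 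Condition \ref{condition_convergence_l_r_epsilon} bundles exactly these hypotheses, so all residual terms are $o_{L^r(\P_i)}(1)$.

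The main obstacle is organizational rather than technical: verifying carefully, case by case on $j\in\{0\}$, $j\in\Gamma_i$, and $j\in\M\setminus(\Gamma_i\cup\{i\})$, that the pieces assembled from Lemma \ref{lemma_limit_l} exactly match the definition of $\epsilon_n(i,j)$ in (\ref{definition_epsilon}), with the additive constants such as $\min_k \varrho^{(k)}$, $\varrho^{(j)}$, $\log\nu_i$, and $\log\nu_j$ accounted for. Once this bookkeeping is done, the Minkowski bound yields the claimed $L^r(\P_i)$-convergence.
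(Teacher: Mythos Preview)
Your proposal is correct and follows essentially the same approach as the paper: the paper does not give a separate formal proof but argues in the text preceding the proposition that the decomposition $\Lambda_n(i,j)/n = (1/n)\sum_{l=1}^n h_{ij}(X_l) + \epsilon_n(i,j)/n$ reduces the claim to the $L^r$-SLLN for the first term (via Condition \ref{condition_convergence_h_l_r} and \eqref{condition_convergence_h}) and to Lemma \ref{lemma_limit_l} for the uniform integrability of the residual (via Condition \ref{condition_convergence_l_r_epsilon}). Your case-by-case bookkeeping is exactly the content the paper summarizes, with only a minor slip in the parenthetical (you wrote $\int f_0^2/f_i\, m(\diff x) = \int (f_0/f_i) f_i\, m(\diff x)=1$, but it is the second integral that equals $1$, while the first is the nontrivial hypothesis \eqref{condition_l_r_l_i}).
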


\subsection{Example 2} \label{subsection_example_2}

As a variant of Example 1, we consider the case $X$ is not necessarily identically distributed in $\Y_0$. 
Suppose $\mathcal{Y}_0 = \mathcal{Y}^{(1)}_{0} \sqcup \cdots \sqcup \mathcal{Y}^{(M)}_{0}$ and $\mathcal{Y}_{0}^{(i)}$ is absorbed with probability one by $\mathcal{Y}_i = \{ i\}$ for each $i \in \M$.  This implies that
\begin{align*}
\P \big\{ \mu = i |  Y_0 \in \Y_0^{(i)}\big\}=1, \quad i \in \M.
\end{align*} 
Also let 
\begin{align*}
f(y,\cdot) \equiv f(z, \cdot) =: f^{(0)}_i(\cdot), \quad y,z \in \mathcal{Y}_{0}^{(i)}, i \in \M.
\end{align*}
The conditional probability of $\theta = t$ given $\{ \mu = i\}$ as in \eqref{def_rho_t_i} can be written
\begin{align*}
\rho_t^{(i)} = \left\{ \begin{array}{ll} \frac {\eta(i)} {\nu_i}, & t=0, \\\frac 1 {\nu_i}\sum_{y_0,\ldots, y_{t-1} \in \mathcal{Y}_{0}^{(i)}} \eta(y_0) \prod_{k=1}^{t-1} P(y_{k-1}, y_k) P(y_{t-1},i), & t\geq 1. \end{array} \right.
\end{align*}

\begin{assumption} \label{assumption_finiteness_ratio_2}  For every $i
  \in \M$, we assume $f_i(\cdot)$ is distinguishable from $f_j(\cdot)$  for  $j \in \M \setminus \{i\}$ and from  $f^{(0)}_j(\cdot)$ for every $i \in \M$; $\int_{\{x\in E: f_i(x)
      \not= f_j(x)\}} f_i(x) m (\diff x)> 0$ and $\int_{\{x\in E: f_i(x)
      \not= f^{(0)}_j(x)\}} f_i(x) m (\diff x)> 0$. This ensures that $q(i,j) > 0$ and $q^{(0)}(i, j) > 0$ where we use \eqref{def_q_i_j} and  define
\begin{align*}
  q^{(0)}(i,j) := \int_E \left( \log \frac {f_i(x)}{f_j^{(0)}(x)}\right) f_i(x)
  m(\diff x), \quad  i, j \in \M.
\end{align*}
\end{assumption}

We assume the following to ensure that $\int_E \big( \log \frac {f^{(0)}_j(x)} {f_j(x)} \big) f_i(x) m(\diff x)$ exists
for every $i, j \in \M$.

\begin{assumption} \label{assumption_q_i_0_2} For every $i, j\in \M$, we
  assume that $q^{(0)}(i,j) < \infty$.
\end{assumption}
We shall show the following under Assumptions \ref{assumption_rho}, \ref{condition_rho_q_finite}, \ref{assumption_finiteness_ratio_2}, and \ref{assumption_q_i_0_2}.
\begin{proposition}[Limits of LLR processes in Example 2] \label{prop_convergence_example_2}
Assumption \ref{condition_a_s_convergence} holds with the limits
\begin{align}    
  l(i,j) &:= \left\{ 
    \begin{aligned}
      &\min_{k \in \M} \big\{ q^{(0)}(i,k)+\varrho^{(k)} \big\}, && j=0 \\
      &\min\big\{q(i,j),q^{(0)}(i,j) + \varrho^{(j)} \big\}, && j \in
      \M\setminus\{i\}
    \end{aligned}
  \right\}  \equiv \left\{
    \begin{aligned}
& \min_{k \in \M} \big\{ q^{(0)}(i,k)+\varrho^{(k)} \big\}, && j=0 \\
      &q(i,j), &&  j \in \Gamma_i \\
      &q^{(0)}(i,j) + \varrho^{(j)}, && j \in \M \setminus (\Gamma_i \cup \{i\})
    \end{aligned} \right\}, \label{definition_limit_llr_3}
\end{align}
where $\Gamma_i := \big\{ j \in \M \setminus \{i\}: q(i,j) < q^{(0)}(i,j) +
    \varrho^{(j)} \big\}$ for every $i\in \M$. 
\end{proposition}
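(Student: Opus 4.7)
The plan is to mirror the proof strategy for Proposition \ref{prop_convergence_example_1}, adapting the intermediate quantities to the structure in which the pre-change density now depends on the eventual absorbing class. First I would derive an Example-2 analog of Lemma \ref{lemma_example_1_lambda}. The starting point is the observation that, conditional on $\{\mu = i\}$, the chain path lies in $\Y_0^{(i)}$ before time $\theta$ and at state $i$ thereafter, so $X_1,\ldots,X_{\theta-1}$ are i.i.d.\ $f^{(0)}_i$ and $X_\theta,X_{\theta+1},\ldots$ are i.i.d.\ $f_i$. Conditioning on $\theta=t$ and summing yields
$$\widetilde{\alpha}_n^{(i)} = \nu_i \prod_{k=1}^n f_i(X_k)\,\exp\bigl(L_n^{(i,0)}\bigr), \qquad \widetilde{\alpha}_n^{(0)} = \sum_{j \in \M} \nu_j \Bigl(1 - \sum_{t=0}^n \rho_t^{(j)}\Bigr)\prod_{k=1}^n f^{(0)}_j(X_k),$$
where I define $L_n^{(j,0)}$ exactly as $L_n^{(j)}$ in \eqref{definition_K} but with $f_0$ replaced by $f^{(0)}_j$.

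From these representations one obtains, for $j\in\M\setminus\{i\}$,
$$\Lambda_n(i,j) = \log\frac{\nu_i}{\nu_j} + \sum_{k=1}^n \log\frac{f_i(X_k)}{f_j(X_k)} + L_n^{(i,0)} - L_n^{(j,0)},$$
with an analogous decomposition for $\Lambda_n(i,0)$ whose denominator is the full sum over $\M$. Next I would prove the Example-2 version of Lemma \ref{lemma_limit_l}. The argument carries over almost verbatim once one notes that under $\P_i$ the SLLN gives $\frac{1}{n}\sum_{k=1}^n \log(f^{(0)}_j/f_j)(X_k) \to q(i,j) - q^{(0)}(i,j)$ (rather than $q(i,j) - q(i,0)$ in Example 1), while Assumption \ref{assumption_rho} yields $\rho_k^{(j)} \approx e^{-k\varrho^{(j)}}$. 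This gives $L_n^{(i,0)}/n \to 0$ and $L_n^{(j,0)}/n \to [q(i,j) - q^{(0)}(i,j) - \varrho^{(j)}]_+$ a.s., hence $\Lambda_n(i,j)/n \to q(i,j) - [q(i,j) - q^{(0)}(i,j) - \varrho^{(j)}]_+ = \min\{q(i,j),q^{(0)}(i,j)+\varrho^{(j)}\}$, which is $l(i,j)$ as claimed.

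For $j=0$, I would factor $\prod_{k=1}^n f_i(X_k)$ out of $\widetilde{\alpha}_n^{(0)}$ to write
$$\Lambda_n(i,0) = \log\nu_i + L_n^{(i,0)} - \log\biggl[\sum_{j\in\M}\nu_j \Bigl(1 - \sum_{t=0}^n \rho_t^{(j)}\Bigr)\prod_{k=1}^n \frac{f^{(0)}_j(X_k)}{f_i(X_k)}\biggr].$$
Under $\P_i$ the SLLN gives $\frac{1}{n}\sum_{k=1}^n \log(f^{(0)}_j/f_i)(X_k) \to -q^{(0)}(i,j)$ and Assumption \ref{assumption_rho} gives $-\frac{1}{n}\log\bigl(1 - \sum_{t=0}^n \rho_t^{(j)}\bigr) \to \varrho^{(j)}$. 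Writing the bracketed sum as $\sum_{j\in\M} e^{-n A_j(n)}$ with $A_j(n) \to q^{(0)}(i,j) + \varrho^{(j)}$ and applying Lemma \ref{lemma_convergence_min_to_mu} yields $\Lambda_n(i,0)/n \to \min_{j\in\M}\{q^{(0)}(i,j) + \varrho^{(j)}\} = l(i,0)$.

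The main obstacle, relative to Example 1, is the treatment of $\widetilde{\alpha}_n^{(0)}$ in the $j=0$ case: there all pre-change densities coincide with $f_0$, so the sum collapses to a single product and the $\min_k \varrho^{(k)}$ in $l(i,0)$ comes purely from the $\rho$-tails, exactly mimicking the computation in \eqref{rho_convergence}. In Example 2 the $j$-dependence of $f^{(0)}_j$ couples the likelihood ratios with the tail asymptotics, so identifying $l(i,0)$ requires the joint use of the logsumexp asymptotic of Lemma \ref{lemma_convergence_min_to_mu} applied to exponents that simultaneously combine the Kullback--Leibler divergences $q^{(0)}(i,\cdot)$ and the rates $\varrho^{(\cdot)}$.
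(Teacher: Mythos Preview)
Your proposal is correct and follows essentially the same approach as the paper: you derive the Example-2 analog of Lemma \ref{lemma_example_1_lambda}, establish the corresponding limit behavior of the $L_n^{(j,0)}$ terms under $\P_i$ (the paper states this as Lemma \ref{lemma_limit_2}, the direct counterpart of Lemma \ref{lemma_limit_l}), and then handle $j=0$ via Lemma \ref{lemma_convergence_min_to_mu}. The only organizational difference is that the paper names the intermediate processes $\Lambda_n^{(0)}(i,j) := \log\bigl(\widetilde{\Pi}_n^{(i)}/\sum_{y\in\Y_0^{(j)}}\Pi_n(y)\bigr)$ explicitly and writes $\Lambda_n(i,0) = -\log\sum_{j\in\M} e^{-\Lambda_n^{(0)}(i,j)}$ before invoking Lemma \ref{lemma_convergence_min_to_mu}, whereas you carry out the same computation directly on the bracketed sum; the content is identical.
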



\begin{remark} \label{remark_example_2}
\begin{enumerate}
\item  Assumptions \ref{condition_rho_q_finite} and \ref{assumption_q_i_0_2}  ensure that
\begin{align*}
q(i,j) &< \infty, \quad j \in \Gamma_i, \\
q^{(0)}(i,j) + \varrho^{(j)} &< \infty, \quad j \in \M \setminus (\Gamma_i \cup \{i\}).
\end{align*}
\item Assumption \ref{assumption_finiteness_ratio_2}  guarantees that
  $l(i,j) > 0$ for every $i \in \M$ and $j \in \M_0 \setminus \{i\}$.  In particular, by (1) $0 <  l(i,j) < \infty$ for any $j \in \M \setminus \{i\}$. Hence, $0 < l(i) < \infty$. 
\item
By \eqref{definition_limit_llr_3}, we can choose $j(i) \in \{0\} \cup \Gamma_i$.  If $j(i) = 0$, we must have $\min_{k \in \M} \varrho^{(k)} < \infty$.
\item Suppose there is a unique $j(i)\in
  \M_0\setminus\{i\}$
  such that $l(i) = l(i,j(i)) = \min_{j \in \M_0 \setminus\{i\}}
  l(i,j)$ for every $i\in \M$.  As in Example 1, $j(i)$ can be $0$ (or
  in $\Gamma_i$).  In particular, for the case $j(i) = 0$, then the
  uniqueness implies that $\argmin\big\{q^{(0)}(i,j) + \varrho^{(j)}
  \big\}$ is uniquely attained by $j=i$.  On the other hand, if $j(i)
  \in \Gamma_i$, then $l(i) < l(i,0)$, $q(i,j(i)) <\min_{j \in \M}
  (q^{(0)}(i,j) + \varrho^{(j)})$, and $\Gamma_i\not= \varnothing$.
\end{enumerate}
\end{remark}

%


Similarly to Example 1 of Section  \ref{subsection_example1}, we simplify the LLR process as follows.  Define
\begin{align*}
\Lambda_n^{(0)}(i,j) &:= \log \frac {\widetilde{\Pi}_n^{(i)}}{\sum_{y \in \mathcal{Y}_0^{(j)}}\Pi_n(y)}, \quad i,j \in \M;
\end{align*}
we later show that $\Lambda_n(i,0)/n \sim \min_{j \in \M} \Lambda_n^{(0)}(i,j)/n$ as $n \rightarrow \infty$ under $\P_i$ (see \eqref{lambda_convergence_min} below).
\begin{lemma} \label{lemma_example_2_lambda}
For $i,j \in \M$, we have
\begin{align*}
\Lambda_n^{(0)}(i,j) 
&=  \sum_{k=1}^n \log \frac { f_i(X_k)} { f^{(0)}_j(X_k)}  + L_n^{(i)} -  \log \big( 1- \sum_{t=0}^n \rho_t^{(j)}  \big) + \log \nu_i - \log \nu_j,
\end{align*}
and for $i \in \M$ and $j \in \M \setminus\{i\}$
\begin{align*}
\Lambda_n(i,j) 
&=\sum_{k=1}^n \log \frac { f_i(X_k)} { f_j(X_k)}  + L_n^{(i)} - L_n^{(j)} + \log \nu_i - \log \nu_j\\ &= - \log \rho_n^{(j)} + \sum_{k=1}^n \log \frac { f_i(X_k)} { f^{(0)}_j(X_k)}  + L_n^{(i)} - K_n^{(j)} + \log \nu_i - \log \nu_j,
\end{align*}
where for each $j \in \M$
\begin{align*}
L_n^{(j)} &:= \log \left( \rho_0^{(j)} + \sum_{k=1}^n
   \rho_k^{(j)}  \prod_{l=1}^{k-1} \frac {f^{(0)}_j(X_l)} {f_j(X_l)} \right), \\
K_n^{(j)} &:= \log \left( \frac {\rho_0^{(j)}} {\rho_n^{(j)}} \prod_{k=1}^n \frac {f_j(X_k)} {f^{(0)}_j(X_k)} + \sum_{k=1}^n
    \frac {\rho_k^{(j)}} {\rho_n^{(j)}} \prod_{m=k}^n \frac {f_j (X_m)} {f^{(0)}_j(X_m)} \right) = -\log \rho_n^{(j)} + \sum_{k=1}^n \log \frac {f_j(X_k)} {f^{(0)}_j(X_k)}  + L_n^{(j)}.
\end{align*}
\end{lemma}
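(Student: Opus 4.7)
The proof follows the pattern of Lemma \ref{lemma_example_1_lambda}, but now the transient emission density depends on which sub-block $\mathcal{Y}_0^{(j)}$ the chain sits in. The decisive structural fact in Example 2 is that $\mathcal{Y}_0^{(i)}$ is absorbed with probability one into $\{i\}$, so transitions out of $\mathcal{Y}_0^{(i)}$ can only land in $\mathcal{Y}_0^{(i)} \cup \{i\}$. Consequently, every sample path that contributes to $\widetilde{\alpha}_n^{(i)}$ spends all of its pre-absorption life in $\mathcal{Y}_0^{(i)}$ (emitting with $f_i^{(0)}$), and every path with $Y_n \in \mathcal{Y}_0^{(j)}$ has $Y_0,\ldots,Y_n \in \mathcal{Y}_0^{(j)}$ (emitting with $f_j^{(0)}$ throughout).

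The plan is to unwind \eqref{def_alpha}--\eqref{def_alpha_sum} by conditioning on the absorption time. First, for $i\in\M$, since $\mathcal{Y}_i=\{i\}$ gives $\widetilde{\alpha}_n^{(i)}=\alpha_n(X_1,\ldots,X_n,i)$, split according to $\theta=t$ for $t=0,1,\ldots,n$. For $t=0$ we get $\eta(i)\prod_{k=1}^n f_i(X_k)=\nu_i\rho_0^{(i)}\prod_{k=1}^n f_i(X_k)$; for $t\ge 1$, summing over $y_0,\ldots,y_{t-1}\in\mathcal{Y}_0^{(i)}$ using the definition of $\rho_t^{(i)}$ yields $\nu_i\rho_t^{(i)}\prod_{k=1}^{t-1}f_i^{(0)}(X_k)\prod_{k=t}^n f_i(X_k)$. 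Factoring out $\nu_i\prod_{k=1}^n f_i(X_k)$ gives
\begin{align*}
\widetilde{\alpha}_n^{(i)}=\nu_i\prod_{k=1}^n f_i(X_k)\Bigl[\rho_0^{(i)}+\sum_{t=1}^n\rho_t^{(i)}\prod_{l=1}^{t-1}\frac{f_i^{(0)}(X_l)}{f_i(X_l)}\Bigr]=\nu_i\,e^{L_n^{(i)}}\prod_{k=1}^n f_i(X_k).
\end{align*}
Second, for the $\Lambda_n^{(0)}(i,j)$ formula, sum $\alpha_n(X_1,\ldots,X_n,y)$ over $y\in\mathcal{Y}_0^{(j)}$; using that the entire path lies in $\mathcal{Y}_0^{(j)}$ and emits with $f_j^{(0)}$,
\begin{align*}
\sum_{y\in\mathcal{Y}_0^{(j)}}\alpha_n(X_1,\ldots,X_n,y)=\P\{Y_0,\ldots,Y_n\in\mathcal{Y}_0^{(j)}\}\prod_{k=1}^n f_j^{(0)}(X_k)=\nu_j\Bigl(1-\sum_{t=0}^n\rho_t^{(j)}\Bigr)\prod_{k=1}^n f_j^{(0)}(X_k).
\end{align*}
Taking the log of $\widetilde{\alpha}_n^{(i)}$ over this quantity delivers the first identity. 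Third, applying the formula for $\widetilde{\alpha}_n^{(i)}$ to both $i$ and $j$ and taking $\log(\widetilde{\alpha}_n^{(i)}/\widetilde{\alpha}_n^{(j)})$ gives the first expression for $\Lambda_n(i,j)$.

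Finally, for the $K_n^{(j)}$-version, verify the algebraic identity
\begin{align*}
K_n^{(j)}=-\log\rho_n^{(j)}+\sum_{k=1}^n\log\frac{f_j(X_k)}{f_j^{(0)}(X_k)}+L_n^{(j)}
\end{align*}
by expanding the right-hand side inside a single logarithm and observing that the product $\prod_{k=1}^n(f_j/f_j^{(0)})(X_k)$ telescopes against $\prod_{l=1}^{k-1}(f_j^{(0)}/f_j)(X_l)$ inside $L_n^{(j)}$ to produce exactly $\prod_{m=k}^n (f_j/f_j^{(0)})(X_m)$, matching the summands in $K_n^{(j)}$. Solving for $-L_n^{(j)}$ and substituting into the first form of $\Lambda_n(i,j)$ converts the difference $\sum\log(f_i/f_j)+\sum\log(f_j/f_j^{(0)})$ into $\sum\log(f_i/f_j^{(0)})$ and introduces $-\log\rho_n^{(j)}-K_n^{(j)}$, giving the claimed second expression.

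The proof is essentially bookkeeping; the only subtlety is the combinatorial step of restricting paths to the correct sub-block, which rests entirely on the Example 2 assumption that $\mathcal{Y}_0^{(i)}$ is absorbed only by $\{i\}$. Without this the path decomposition in Step 1 would couple different sub-blocks and the clean factorization into $\nu_i\,e^{L_n^{(i)}}\prod f_i$ would fail; once this structural point is in hand, everything else is direct substitution and a telescoping product.
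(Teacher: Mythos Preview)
Your proof is correct and follows essentially the same approach as the paper: both compute $\widetilde{\alpha}_n^{(i)}$ and $\sum_{y\in\mathcal{Y}_0^{(j)}}\alpha_n(\cdot,y)$ by conditioning on the absorption time and exploiting the Example~2 structure that confines pre-absorption paths to a single sub-block, then take logarithms of the ratios. If anything, you are slightly more explicit than the paper in articulating why the path decomposition restricts to $\mathcal{Y}_0^{(i)}$ and in verifying the $K_n^{(j)}$ identity; the paper simply refers back to the analogous computation in Lemma~\ref{lemma_alpha_i}.
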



As in Example 1, we decompose each LLR process for every $i\in \M$ such that
\begin{align*}
  \Lambda_n(i,j) &=  \sum_{l=1}^n h_{ij}(X_l) + \epsilon_n(i,j), \quad j \in \M \setminus \{i\},\\
  \Lambda_n^{(0)}(i,j) &=  \sum_{l=1}^n h^{(0)}_{ij}(X_l) + \epsilon_n^{(0)}(i,j), \quad j \in \M,
\end{align*}
where
\begin{align*}
   h_{ij}(x) &:= \left\{
    \begin{aligned}
      &\log \frac {f_i(x)}{f_j(x)}, && j \in \Gamma_i \\
&\log \frac {f_i(x)}{f^{(0)}_j(x)} + \varrho^{(j)}, && j \in \M \setminus (\Gamma_i \cup\{i\})
    \end{aligned} \right\}, \quad x \in E, \\ 
h^{(0)}_{ij}(x) &:= \log \frac {f_i(x)}{f^{(0)}_j(x)} + \varrho^{(j)}, \quad x \in E, \\
  \epsilon_n(i,j) &:= \left\{
    \begin{aligned}
      &L_n^{(i)} - L_n^{(j)}+\log \nu_i - \log \nu_j, && j \in \Gamma_i \\
      &L_n^{(i)} - K_n^{(j)}+ \log \nu_i - \log \nu_j - \log \rho^{(j)}_n - n \varrho^{(j)}, && j \in
      \M\setminus (\Gamma_i \cup \{i\})
    \end{aligned} \right\}, \quad n\ge 1, \\
\epsilon_n^{(0)}(i,j) &:= L_n^{(i)}  -   {\log \big( 1- \sum_{t=0}^n \rho_t^{(j)}  \big)} - n \varrho^{(j)} +\log \nu_i - \log \nu_j, \quad n \geq 1.
\end{align*}
By the SLLN and Assumption \ref{assumption_rho}, for every $i \in \M$, we have  $\P_i$-a.s.\ as $n \uparrow \infty$
\begin{align}\label{convergence_h_2}
\begin{split}
\frac 1 n {\sum_{l=1}^n h_{ij}(X_l)}
  &\longrightarrow l(i,j), \quad j \in \M \setminus \{i\}\\
 \frac 1 n {\sum_{l=1}^n h_{ij}^{(0)}(X_l)}
  &\longrightarrow q^{(0)}(i,j)+\varrho^{(j)}, \quad j \in \M.
\end{split}
\end{align}

We now show that $\epsilon_n(i,j)$ converges almost surely to zero as
$n\to \infty$.  Similarly to Lemma \ref{lemma_limit_l}, the following
holds.

\begin{lemma} \label{lemma_limit_2} 
For every $i \in \M$, we have the followings under $\P_i$.
  \begin{enumerate}
  \item[(i)] $L_n^{(i)}/n \xrightarrow{n \uparrow \infty} 0$ a.s.
  \item[(ii)] $L_n^{(j)}/n \xrightarrow{n \uparrow \infty}
    \left[q(i,j)-q^{(0)}(i,j)-\varrho^{(j)} \right]_+$ a.s.\ for every $j \in
    \M\setminus \{i\}$.
  \item[(iii)] $K_n^{(j)}/n \xrightarrow{n \uparrow \infty}
    \left[q(i,j)-q^{(0)}(i,j)-\varrho^{(j)} \right]_-$ a.s.\ for every $j \in
    \M\setminus (\Gamma_i \cup \{i\})$.
  \item[(iv)] $L_n^{(i)}$ converges a.s.\ as $n \uparrow \infty$ to an a.s.
    finite random variable $L_\infty^{(i)}$.
  \item[(v)] $L_n^{(j)}$ converges a.s.\ as $n \uparrow \infty$ to an a.s.
    finite random variable $L_\infty^{(j)}$ for every $j \in
    \Gamma_i$.
  \item[(vi)] For every $j\in \M$, $(|L^{(j)}_n/n|^r)_{n\ge 1}$ is
    uniformly integrable for every $r\ge 1$, if
    \begin{align}
       \int_{E}\frac {f^{(0)}_j(x)} {f_j(x)} f^{(0)}_i(x) m (\diff x)  <\infty \quad \text{and}
      \quad \int_{E}\frac {f^{(0)}_j(x)} {f_j(x)} f_i(x) m (\diff x)  <\infty. \label{eq:sufficient-for-UI-of-L-average_2}
    \end{align}
  \item[(vii)] For every $j\in \M \setminus (\Gamma_i \cup \{i\})$, $(|K^{(j)}_n/n|^q)_{n\ge 1}$ is
    uniformly integrable for every $0\le q \le r$, if
    (\ref{eq:sufficient-for-UI-of-L-average_2}) holds and
    \begin{align}
      \int_{E}\left| \frac {f_j(x)} {f^{(0)}_j(x)}\right|^r f^{(0)}_i(x) m (\diff x)  <\infty \quad \text{and} \quad  \int_{E}\left| \frac {f_j(x)} {f_j^{(0)}(x)}\right|^r f_i(x) m (\diff x)  <\infty, \quad \textrm{for some } r \geq 1. \label{eq:sufficient-for-UI-of-K-average_2}
    \end{align}
  \end{enumerate}
\end{lemma}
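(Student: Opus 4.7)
The plan is to mirror the proof of Lemma~\ref{lemma_limit_l} of Example~1, replacing the common pre-change density $f_0$ by the class-specific density $f_j^{(0)}$ and redefining the Kullback--Leibler parameters accordingly. Under $\P_i$, two asymptotic inputs drive everything: since $\theta<\infty$ a.s., the observations $X_l$ are eventually $f_i$-distributed, so the SLLN gives
\begin{align*}
  \frac{1}{n}\sum_{l=1}^{n}\log\frac{f_j^{(0)}(X_l)}{f_j(X_l)}\;\xrightarrow[n\uparrow\infty]{\P_i\text{-a.s.}}\;\int_{E}\log\frac{f_j^{(0)}(x)}{f_j(x)}f_i(x)\,m(\diff x)=q(i,j)-q^{(0)}(i,j),
\end{align*}
and by Assumption~\ref{assumption_rho}, $-\log\rho_k^{(j)}/k\to\varrho^{(j)}$. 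Setting $a_{ij}:=q(i,j)-q^{(0)}(i,j)-\varrho^{(j)}$, the $k$-th term inside $L_n^{(j)}$ satisfies $\frac{1}{k}\log\bigl(\rho_k^{(j)}\prod_{l=1}^{k-1}f_j^{(0)}(X_l)/f_j(X_l)\bigr)\to a_{ij}$ $\P_i$-a.s., so the sign of $a_{ij}$ dictates whether $L_n^{(j)}$ stays bounded or grows linearly.

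Parts (i) and (iv) follow by setting $j=i$: $a_{ii}=-q^{(0)}(i,i)<0$ by Assumption~\ref{assumption_finiteness_ratio_2}, so the summands decay exponentially, and $\rho_0^{(i)}+\sum_{k\ge 1}\rho_k^{(i)}\prod_{l=1}^{k-1}f_i^{(0)}(X_l)/f_i(X_l)$ converges a.s.\ to an a.s.\ finite, strictly positive random variable, giving $L_n^{(i)}\to L_\infty^{(i)}$ and $L_n^{(i)}/n\to 0$. Part (v) is the identical argument for $j\in\Gamma_i=\{j\neq i:a_{ij}<0\}$. For (ii), the case $j\in\Gamma_i$ is covered by (v), yielding $L_n^{(j)}/n\to 0=[a_{ij}]_+$; for $j\in\M\setminus(\Gamma_i\cup\{i\})$, one has $a_{ij}\ge 0$ and standard log-sum-exp bounds on a sum whose terms grow at the geometric rate $a_{ij}$ give $L_n^{(j)}/n\to a_{ij}=[a_{ij}]_+$. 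Part (iii) then follows by writing
\begin{align*}
  \frac{K_n^{(j)}}{n}=\frac{-\log\rho_n^{(j)}}{n}+\frac{1}{n}\sum_{k=1}^{n}\log\frac{f_j(X_k)}{f_j^{(0)}(X_k)}+\frac{L_n^{(j)}}{n},
\end{align*}
whose three terms converge a.s.\ to $\varrho^{(j)}$, $q^{(0)}(i,j)-q(i,j)$, and $a_{ij}$, respectively, which cancel to $0=[a_{ij}]_-$ on $\M\setminus(\Gamma_i\cup\{i\})$.

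The main obstacle is the uniform integrability in (vi) and (vii). For (vi), the plan is to use the elementary bound $L_n^{(j)}\le\log(n+1)+\max_{0\le k\le n}\log\bigl(\rho_k^{(j)}\prod_{l=1}^{k-1}f_j^{(0)}(X_l)/f_j(X_l)\bigr)$ from above together with $L_n^{(j)}\ge\log\rho_0^{(j)}$ from below, and then bound the $L^r(\P_i)$-norm of the maximum by splitting the partial sum $\sum_{l=1}^{k-1}\log f_j^{(0)}(X_l)/f_j(X_l)$ at $\theta$ and controlling each block using the change-of-measure moment bounds supplied by \eqref{eq:sufficient-for-UI-of-L-average_2}; this yields $\sup_n\E_i[|L_n^{(j)}/n|^{r+\varepsilon}]<\infty$ for some $\varepsilon>0$ and hence the claimed uniform integrability. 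Part (vii) is handled by the same argument applied to $K_n^{(j)}$, with \eqref{eq:sufficient-for-UI-of-K-average_2} controlling the additional sum $\sum_{k=1}^{n}\log f_j(X_k)/f_j^{(0)}(X_k)$. The new difficulty relative to Example~1 is that under $\P_i$ the pre-$\theta$ observations follow $f_i^{(0)}$ rather than a common $f_0$, so the integrability conditions \eqref{eq:sufficient-for-UI-of-L-average_2}--\eqref{eq:sufficient-for-UI-of-K-average_2} couple $f_i^{(0)}$ with $f_j^{(0)}$ and $f_j$, and the moment estimates must be verified simultaneously under both the $f_i^{(0)}$- and the $f_i$-laws.
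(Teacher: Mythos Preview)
Your treatment of parts (i)--(v) is essentially the paper's argument: both identify the exponential rate $a_{ij}=q(i,j)-q^{(0)}(i,j)-\varrho^{(j)}$ of the $k$th summand and let its sign decide between convergence and linear growth, and your decomposition for (iii) is identical to the paper's. The paper packages all of this into a general series lemma (Lemma~\ref{lemma_convergence_series}), applied with $\xi_k=e^{-\varrho^{(j)}}f_j^{(0)}(X_k)/f_j(X_k)$, but the content is the same. (Minor slip: $a_{ii}=-q^{(0)}(i,i)-\varrho^{(i)}$, not $-q^{(0)}(i,i)$; the conclusion is unaffected.) One case you do not address is $\varrho^{(j)}=\infty$, which Assumption~\ref{assumption_rho} permits; the paper handles it separately by dominating $L_n^{(j)}$ with an auxiliary process built from $\rho_k^{(j)}\vee e^{-(k-1)M}$ for finite $M$.

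For (vi)--(vii) the routes differ. The paper reads off uniform integrability directly from Lemma~\ref{lemma_convergence_series}(iii)--(iv): the hypothesis $\gamma:=\max\{\E_\infty[\xi_1],\E_0[\xi_1]\}<\infty$ is exactly the pair of integrals in \eqref{eq:sufficient-for-UI-of-L-average_2}, and (vii) follows from part (iv) of that lemma together with the decomposition of $K_n^{(j)}$. Your max-bound route is workable, but the phrase ``change-of-measure moment bounds'' does not explain how a \emph{first-moment} hypothesis on $f_j^{(0)}/f_j$ yields $L^r$ control for \emph{every} $r\ge 1$. The missing ingredient is the elementary inequality $(\log x)_+^r\le C_r\,x$ for $x>0$, which converts $\E_i[\xi_l]<\infty$ into $\E_i[(\log\xi_l)_+^r]<\infty$ for all $r$; combined with the lower bound $L_n^{(j)}\ge\log(\rho_0^{(j)}+\rho_1^{(j)})$ and the crude estimate $\max_{k\le n}\bigl(\log\rho_k^{(j)}+\sum_{l<k}\log\xi_l\bigr)_+\le\sum_{l<n}(\log\xi_l)_+$, Minkowski then gives $\sup_n\|L_n^{(j)}/n\|_r<\infty$ for every $r$, hence the claimed uniform integrability. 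Once this step is made explicit your argument goes through, and in fact has the mild advantage of not requiring the separate $\varrho^{(j)}=\infty$ truncation that the paper's $\xi_k=e^{-\varrho^{(j)}}f_j^{(0)}(X_k)/f_j(X_k)$ substitution forces.
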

By this lemma, for every $i \in \M$, we have $\epsilon_n(i,j)/n \rightarrow 0$ for $j\in \M \setminus\{i\}$, and $\epsilon_n^{(0)}(i,j)/n \rightarrow 0$ for $j\in \M$, as $n
  \uparrow \infty$ $\P_i$-a.s.
This together with Lemma \ref{convergence_h_2} shows Proposition \ref{prop_convergence_example_2}, once we show that
\begin{align}
\frac 1 n \Lambda_n(i,0) \xrightarrow[n \uparrow \infty]{\textrm{$\P_i$-a.s.}} \min_{j \in \M} \big\{ q^{(0)}(i,j)+\varrho^{(j)} \big\}. \label{lambda_convergence_min}
\end{align}
Indeed, 
\begin{align}
\frac 1 n \Lambda_n(i,0) = \frac 1 n  \log \left( \frac {\widetilde{\Pi}_n^{(i)}}{\sum_{j \in \M} \sum_{y \in \Y_j^{(0)}}\Pi_n(y)} \right) = -\frac 1 n  \log \left( \sum_{j \in \M} \frac {\sum_{y \in \Y_j^{(0)}}\Pi_n(y) } {\widetilde{\Pi}_n^{(i)}} \right)  
= -\frac 1 n  \log \left( \sum_{j \in \M} e^{-n A_n^{(j)}} \right)  \label{lambda_zero_relationship}
\end{align}
where $A_n^{(j)}:= \Lambda_n^{(0)}(i,j)/n \rightarrow q^{(0)}(i,j)+\varrho^{(j)}$ as $n \uparrow \infty$ $\P_i$-a.s. Hence by Lemma \ref{lemma_convergence_min_to_mu},  \eqref{lambda_convergence_min} holds.

We now pursue the convergence in the $L^r$-sense.  
In view of \eqref{lambda_zero_relationship}, we have $\Lambda_n(i,0)/n \leq \Lambda_n^{(0)}(i,j)/n$ for any $j \in \M$
and
\begin{align*}
\frac 1 n \Lambda_n(i,0)   \geq -\frac 1 n  \log \left( M \max_{j \in \M} \frac {\sum_{y \in \Y_j^{(0)}}\Pi_n(y) } {\widetilde{\Pi}_n^{(i)}} \right)  
= - \frac {\log M} n + \min_{j \in \M}  \frac 1 n  \Lambda_n^{(0)}(i,j) \geq - \frac {\log M} n - \sum_{j \in \M}  \frac 1 n  (\Lambda_n^{(0)}(i,j))_-.
\end{align*}
Therefore, for the proof of the uniform integrability of $\Lambda_n(i,0)/n$, it is sufficient to show that of $\Lambda_n^{(0)}(i,j)/n$ for every $j \in \M$.

As in Example 1, for every $i \in \M$ and $r \geq 1$,  we have $(1/n)
  {\sum_{l=1}^n h_{ij}(X_l)} \xrightarrow[n \uparrow
  \infty]{L^r(\P_i)} l(i,j)$ for $j \in \M \setminus \{i\}$, if
\begin{align*}
  \int_E \left| h_{ij}(x) \right|^r f^{(0)}_i(x) m(\diff x)<\infty \quad
  \text{and} \quad \int_E \left| h_{ij}(x) \right|^r f_i(x) m(\diff x)<\infty, 
\end{align*}
which are satisfied under Condition \ref{condition_convergence_h_l_r_2} below.
\begin{condition} \label{condition_convergence_h_l_r_2} For given $i \in
  \M$, $j\in \M\setminus\{i\}$, and $r \geq 1$, suppose that if $ j\in \Gamma_i$
  \begin{align*} 
      \int_E \left| \log \frac{f_i(x)}{f_j(x)}\right|^r f^{(0)}_i(x) m(\diff x)
      <\infty \quad \text{and} \quad \int_E \left| \log \frac{f_i(x)}{f_j(x)}\right|^r f_i(x) m(\diff x)<\infty, 
\end{align*}
and if $j\in \M \setminus (\Gamma_i \cup \{i\})$
\begin{align}
      \int_E \left| \log \frac{f_i(x)}{f_j^{(0)}(x)}\right|^r f^{(0)}_i(x) m(\diff x)
      <\infty \quad \text{and} \quad \int_E \left| \log \frac{f_i(x)}{f^{(0)}_j(x)}\right|^r f_i(x) m(\diff x) <\infty.  \label{cond_f_i_j_0}
  \end{align}  
\end{condition}
Moreover, $(1/n)
  {\sum_{l=1}^n h_{ij}^{(0)}(X_l)} \xrightarrow[n \uparrow
  \infty]{L^r(\P_i)} q^{(0)}(i,j)+\varrho^{(j)}$ for $j \in \M$, if 
\begin{align*}
  \int_E \left| h_{ij}^{(0)}(x) \right|^r f^{(0)}_i(x) m(\diff x)<\infty \quad
  \text{and} \quad \int_E \left| h_{ij}^{(0)}(x) \right|^r f_i(x) m(\diff x)<\infty, 
\end{align*}
which is satisfied if $\varrho^{(j)} < \infty$ and the following holds.
\begin{condition} \label{condition_convergence_h_l_r_3} For given $i \in
  \M$, $j\in \M$, and $r \geq 1$, suppose that \eqref{cond_f_i_j_0} holds.
\end{condition}

On the other hand, by Lemma \ref{lemma_limit_l}, $\epsilon_n(i,j)/n
  \rightarrow 0$ as $n \uparrow \infty$ in $L^r(\P_i)$ under Condition \ref{condition_convergence_l_r_epsilon_2} below for $j \in \M \setminus \{i\}$, and, for $j = 0$, $\epsilon_n^{(0)}(i,j)/n
  \rightarrow 0$ as $n \uparrow \infty$ in $L^r(\P_i)$ under Condition \ref{condition_convergence_l_r_epsilon_3} below for $j \in \M$.  Notice as in Lemma \ref{lemma_limit_l} (vi) that in order for $L_n^{(i)}$ to converge in $L^r$ under $\P_i$ to zero, it is sufficient to have
\begin{align}
 \int_E \frac {f^{(0)}_i (x)} {f_i(x)} f^{(0)}_i(x) m (\diff x) < \infty, \label{condition_l_r_l_i_2}
\end{align}
because $\int_E \frac {f^{(0)}_i (x)} {f_i(x)} f_i(x) m (\diff x) = \int_E f^{(0)}_i (x)  m (\diff x)= 1 < \infty$.  

\begin{condition} \label{condition_convergence_l_r_epsilon_2} Given $i
  \in \M$, $j\in \M \setminus\{i\}$ and $r \geq 1$, we suppose that
  (\ref{condition_l_r_l_i_2}) holds,
\begin{enumerate}
\item if $j \in \Gamma_i$,
  (\ref{eq:sufficient-for-UI-of-L-average_2}) holds, and
\item if $j \in \M \setminus \Gamma_i$, (\ref{eq:sufficient-for-UI-of-K-average_2})   holds for the given $r$.
\end{enumerate}
\end{condition}

\begin{condition} \label{condition_convergence_l_r_epsilon_3} Given $i
  \in \M$, we suppose that (\ref{condition_l_r_l_i_2}) holds
 and $\max_{j \in \M}\varrho^{(j)} < \infty$ holds.
\end{condition}



In summary, we have the following $L^r$-convergence results.

\begin{proposition} \label{proposition_convergence_llr} 
\begin{enumerate}
\item 
For every $i
  \in \M$ and $j\in \M \setminus\{i\}$, we have $\Lambda_n(i,j)/n \rightarrow l(i,j)$ as $n \uparrow \infty$ in $L^r(\P_i)$ for some $r \geq 1$ if
Conditions \ref{condition_convergence_h_l_r_2} and
\ref{condition_convergence_l_r_epsilon_2} hold for the given $r$, 
\item For every $i
  \in \M$,
we have $\Lambda_n(i,0)/n \rightarrow l(i,0)$ as $n \uparrow \infty$ in $L^r(\P_i)$ for some $r \geq 1$ if Condition \ref{condition_convergence_h_l_r_3} holds for every $j \in \M$ and Condition \ref{condition_convergence_l_r_epsilon_3} holds.
\end{enumerate}


\end{proposition}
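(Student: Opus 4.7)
The plan is to treat the two parts separately by exploiting the decompositions $\Lambda_n(i,j) = \sum_{l=1}^n h_{ij}(X_l) + \epsilon_n(i,j)$ (for $j \in \M \setminus \{i\}$) and $\Lambda_n^{(0)}(i,j) = \sum_{l=1}^n h_{ij}^{(0)}(X_l) + \epsilon_n^{(0)}(i,j)$ (for $j \in \M$) set up just above the statement, and then combining $L^r(\P_i)$-convergence of the two summands via Vitali's theorem.

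For part (1), I would first establish $(1/n)\sum_{l=1}^n h_{ij}(X_l) \to l(i,j)$ in $L^r(\P_i)$. Under $\P_i$, the sequence $(X_l)$ consists of i.i.d.\ variables with density $f_i^{(0)}$ for $l < \theta$ and i.i.d.\ with density $f_i$ for $l \geq \theta$, and $\theta < \infty$ a.s., so the $L^r$-integrability of $h_{ij}$ under both densities provided by Condition \ref{condition_convergence_h_l_r_2} permits a verbatim adaptation of Lemma 4.3 of \cite{Dayanik_2012}. Next I would show $\epsilon_n(i,j)/n \to 0$ in $L^r(\P_i)$. Its explicit formula is a finite linear combination of $L_n^{(i)}/n$, either $L_n^{(j)}/n$ (when $j \in \Gamma_i$) or $K_n^{(j)}/n$ together with $(-\log \rho_n^{(j)})/n - \varrho^{(j)}$ (when $j \in \M \setminus (\Gamma_i \cup \{i\})$), plus deterministic $O(1/n)$ terms. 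Lemma \ref{lemma_limit_2}(i)--(iii) gives a.s.\ vanishing of each piece; parts (vi)--(vii) of the same lemma, activated by Condition \ref{condition_convergence_l_r_epsilon_2}, supply the uniform integrability of $L_n^{(i)}/n$, $L_n^{(j)}/n$, and $K_n^{(j)}/n$; and Assumption \ref{assumption_rho} controls the $\rho_n^{(j)}$-term.

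For part (2), the key tool is the identity $\Lambda_n(i,0) = -\log \sum_{j \in \M} e^{-\Lambda_n^{(0)}(i,j)}$ already exploited in \eqref{lambda_zero_relationship}. This yields the deterministic sandwich $\min_{j \in \M}\Lambda_n^{(0)}(i,j) - \log M \leq \Lambda_n(i,0) \leq \min_{j \in \M}\Lambda_n^{(0)}(i,j)$, so that using that $\min$ is $1$-Lipschitz,
\begin{align*}
\left| \frac{\Lambda_n(i,0)}{n} - l(i,0) \right| \leq \frac{\log M}{n} + \sum_{j \in \M} \left| \frac{\Lambda_n^{(0)}(i,j)}{n} - \bigl(q^{(0)}(i,j) + \varrho^{(j)}\bigr) \right|.
\end{align*}
The claim therefore reduces to showing $\Lambda_n^{(0)}(i,j)/n \to q^{(0)}(i,j) + \varrho^{(j)}$ in $L^r(\P_i)$ for every $j \in \M$. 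This proceeds exactly as in part (1): the Cesaro average converges in $L^r$ under Condition \ref{condition_convergence_h_l_r_3}, and $\epsilon_n^{(0)}(i,j)/n = L_n^{(i)}/n - (1/n)\log(1 - \sum_{t=0}^n \rho_t^{(j)}) - \varrho^{(j)} + O(1/n)$ tends to zero in $L^r$ under Condition \ref{condition_convergence_l_r_epsilon_3} by Lemma \ref{lemma_limit_2}(i),(iv),(vi), the finiteness $\max_j \varrho^{(j)} < \infty$, and Assumption \ref{assumption_rho}.

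The main obstacle is the $L^r$-control (as opposed to the a.s.\ control already in hand) of the auxiliary processes $L_n^{(j)}$ and $K_n^{(j)}$ inside the error term, together with the bookkeeping required by the case distinction $j \in \Gamma_i$ versus $j \in \M \setminus (\Gamma_i \cup \{i\})$ when invoking the appropriate part of Lemma \ref{lemma_limit_2}; once those pieces are in place, combining them with Vitali's theorem and the $1$-Lipschitz sandwich for the minimum is routine.
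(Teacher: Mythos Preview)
Your proposal is correct and follows essentially the same approach as the paper: the proposition is presented in the paper as a summary of the discussion immediately preceding it, which proceeds exactly by the decomposition $\Lambda_n(i,j)=\sum_l h_{ij}(X_l)+\epsilon_n(i,j)$, invokes Lemma~\ref{lemma_limit_2}(vi)--(vii) under Conditions~\ref{condition_convergence_l_r_epsilon_2}--\ref{condition_convergence_l_r_epsilon_3} for the error term, and handles $j=0$ via the same sandwich $\min_{j}\Lambda_n^{(0)}(i,j)-\log M\le \Lambda_n(i,0)\le \min_{j}\Lambda_n^{(0)}(i,j)$ derived from \eqref{lambda_zero_relationship}. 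Your $1$-Lipschitz bound on the minimum is a slightly cleaner packaging of the paper's bound $\Lambda_n(i,0)/n\ge -(\log M)/n-\sum_j(\Lambda_n^{(0)}(i,j))_-/n$, but the content is identical.
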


\subsection{Higher-Order approximations} \label{section_higher_approximation}

For Examples 1 and 2 described in Sections  \ref{subsection_example1} and \ref{subsection_example_2}, respectively, higher-order asymptotic approximations for
the minimum Bayes risk in Problem \ref{problem_bayes_risk} can be obtained by choosing appropriately
the values of $\sigma$ in \eqref{sigma_expression}.  Proposition
\ref{corollary_bound_kappa_i_and_r} (i) gives an upper bound on
$(R_i^{(a)} (\cdot,\cdot))_{i \in \M}$, and here we investigate if
there exists some $\sigma$ such that (\ref{convergence_sigma}) holds. 
 This can be obtained by a direct application of the theorems in \cite{Dayanik_2012}.

\begin{assumption} 
We assume that $j(i)$ is unique, for each $i \in \M$.
\end{assumption}
By Remarks \ref{remark_regarding_l} and \ref{remark_example_2},  $j(i) \in \{0\} \cup \Gamma_i $ in both Examples 1 and 2.   We shall first consider the case $j(i) \in \Gamma_i$.

\subsubsection{For the case $j(i) \in \Gamma_i$.} Because we are assuming $\Y_j := \{j\}$ for every $j \in \M$, we can set
\begin{align*}
\underline{a}_{ji} := \left\{ \begin{array}{ll} \min_{y \in \Y_0}a_{yi}, & j =0 \\ a_{ji}, & j \in \M \end{array} \right\} \quad \textrm{and} \quad \overline{a}_{ji} := \left\{ \begin{array}{ll} \max_{y \in \Y_0}a_{yi}, & j =0 \\ a_{ji}, & j \in \M \end{array} \right\},
\end{align*}
and for every $n \geq 1$
\begin{align*}
    \overline{G}_i^{(a)} (n) &:= \sum_{j \in \M_0 \setminus\{i\}} \overline{a}_{ji}
    e^{-\Lambda_n(i,j)} \quad \textrm{and} \quad \underline{H}_i^{(a)}(A_i) := - \log \overline{G}_i^{(a)}(\tau^{(i)}_A)+\log
        A_i - \log 1_{\left\{d_A=i, \; \theta \leq \tau_A <
          \infty\right\}}, \\
 \underline{G}_i^{(a)} (n) &:= \sum_{j \in \M_0 \setminus\{i\}} \underline{a}_{ji}
    e^{-\Lambda_n(i,j)} \quad \textrm{and} \quad  \overline{H}_i^{(a)}(A_i) := - \log \underline{G}_i^{(a)}(\tau^{(i)}_A)+\log
        A_i - \log 1_{\left\{d_A=i, \; \theta \leq \tau_A <
          \infty\right\}},
    \end{align*} 
    where it can be shown that $\overline{H}_i^{(a)}(A_i)$ and
    $\underline{H}_i^{(a)}(A_i)$ are bounded from below as in the
    proof of Lemma 5.1 of \cite{Dayanik_2012}.

Fix $i \in \M$. By Lemma \ref{lemma_changing_measure}  and because $\tau_A =
\tau^{(i)}_A$ on $\{d_A = i, \theta \le \tau_A < \infty\}$, we have
\begin{align*}
  {R_i^{(a)} (\tau_A,d_A)} / {A_i} &\leq \E_i
  \big[ 1_{\{d_A=i, \; \theta \leq \tau_A < \infty \}}
    \overline{G}_i^{(a)}(\tau^{(i)}_A) / A_i\big]
  = \E_i \big[ \exp \big\{- \underline{H}_i^{(a)}(A_i) \big\}\big], \\
  {R_i^{(a)} (\tau_A,d_A)} / {A_i} &\geq \E_i
  \big[ 1_{\{d_A=i, \; \theta \leq \tau_A < \infty \}}
    \underline{G}_i^{(a)}(\tau^{(i)}_A) / A_i\big]
  = \E_i \big[ \exp \big\{- \overline{H}_i^{(a)}(A_i) \big\}\big].
\end{align*}
Suppose  the overshoot  \begin{align}
W_i(A_i) := \Phi_{\tau^{(i)}_A}^{(i)} - (-\log A_i) = \Phi_{\tau^{(i)}_A}^{(i)} + \log A_i \geq 0, \label{def_overshoot}
\end{align}
converges in distribution under $\P_i$ to some random variable, say, $W_i$.  Then, as in Lemma 5.1 of \cite{Dayanik_2012}, $\underline{H}_i^{(a)} (A_i)$ and $\overline{H}_i^{(a)} (A_i)$ converge in distribution as $A_i \downarrow 0$ under $\P_i$ to
$W_i - \log a_{j(i)i}$ (note $a_{j(i)i} = \overline{a}_{j(i)i} = \underline{a}_{j(i)i}$ by the assumption that $j(i) \in \Gamma_i$).
Now because $x \mapsto e^{-x}$ is continuous and bounded on $x \in [b,\infty]$ for any $b \in \R$, we have
${R_i^{(a)} (\tau_A,d_A)} / {A_i} \xrightarrow{A_i \downarrow 0} \E_i [ \exp \{- W_i + \log a_{j(i)i} \}] = a_{j(i)i} \E_i [ \exp \{- W_i  \}]$,
and therefore (\ref{convergence_sigma}) holds with $\sigma_i = a_{j(i)i} \E_i [ \exp \{- W_i  \}]$. 

\begin{lemma} \label{lemma_sigma} Fix $i \in \M$. If $j(i) \in \Gamma_i$ is unique
  and the overshoot $W_i(A_i)$ in (\ref{def_overshoot}) converges in
  distribution as $A_i \downarrow 0$ to some random variable $W_i$
  under $\P_i$, then (\ref{convergence_sigma}) holds with
  $\sigma_i := a_{j(i)i} \E_i [ \exp \{- W_i \}]$.
\end{lemma}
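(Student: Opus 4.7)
The plan is to carry out rigorously the sandwich argument already sketched immediately before the statement. First, Proposition \ref{corollary_measure_change} combined with the bounds $\underline{a}_{ji} \le a_{yi} \le \overline{a}_{ji}$ for $y \in \Y_j$ and the identity $\tau_A = \tau_A^{(i)}$ on $\{d_A = i,\, \theta \le \tau_A < \infty\}$ yields the sandwich
\[
\E_i\!\left[\exp\{-\overline{H}_i^{(a)}(A_i)\}\right] \,\le\, R_i^{(a)}(\tau_A, d_A)/A_i \,\le\, \E_i\!\left[\exp\{-\underline{H}_i^{(a)}(A_i)\}\right];
\]
the $-\log 1_{\{\cdot\}}$ summand in $\underline{H}_i^{(a)}, \overline{H}_i^{(a)}$ forces the integrands to vanish off the event $\{d_A = i,\, \theta \le \tau_A < \infty\}$.

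Next I would extract the dominant term. Because $j(i) \in \Gamma_i$ is the \emph{unique} minimizer of $l(i,\cdot)$, Assumption \ref{condition_a_s_convergence} together with Proposition \ref{proposition_infinity_tau_i} gives, $\P_i$-a.s.\ as $A_i \downarrow 0$, for every $j \in \M_0 \setminus \{i, j(i)\}$,
\[
\Lambda_{\tau_A^{(i)}}(i,j) - \Lambda_{\tau_A^{(i)}}(i,j(i)) = \tau_A^{(i)}\left[\,l(i,j) - l(i,j(i)) + o(1)\,\right] \longrightarrow +\infty.
\]
Consequently $\overline{G}_i^{(a)}(\tau_A^{(i)}) = \overline{a}_{j(i)i}\, e^{-\Lambda_{\tau_A^{(i)}}(i,j(i))}(1+o(1))$, and analogously for $\underline{G}_i^{(a)}$; applying the same log-sum-exp reasoning to the representation \eqref{identity_phi} of $\Phi_{\tau_A^{(i)}}^{(i)}$ also yields $\Phi_{\tau_A^{(i)}}^{(i)} = \Lambda_{\tau_A^{(i)}}(i,j(i)) + o(1)$. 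Since $j(i) \in \Gamma_i \subset \M$ implies $\overline{a}_{j(i)i} = \underline{a}_{j(i)i} = a_{j(i)i}$, these approximations combine, $\P_i$-a.s.\ on $\{d_A = i,\, \theta \le \tau_A < \infty\}$, to give
\[
\underline{H}_i^{(a)}(A_i) = \overline{H}_i^{(a)}(A_i) + o(1) = W_i(A_i) - \log a_{j(i)i} + o(1).
\]

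Finally, under the hypothesis $W_i(A_i) \Rightarrow W_i$ under $\P_i$, Slutsky's theorem yields $\underline{H}_i^{(a)}(A_i), \overline{H}_i^{(a)}(A_i) \Rightarrow W_i - \log a_{j(i)i}$. Both random variables are bounded from below by a deterministic constant (as recorded just before the lemma), so $x \mapsto e^{-x}$ is bounded and continuous on their effective range, and, provided $\P_i\{d_A = i,\, \theta \le \tau_A < \infty\} \to 1$ (which follows from $\theta < \infty$ $\P_i$-a.s., $\tau_A \uparrow \infty$, and the a.s.\ convergence $\widetilde{\Pi}_n^{(i)} \to 1$ under $\P_i$), the portmanteau theorem gives
\[
\E_i\!\left[\exp\{-\underline{H}_i^{(a)}(A_i)\}\right],\ \E_i\!\left[\exp\{-\overline{H}_i^{(a)}(A_i)\}\right] \longrightarrow a_{j(i)i}\,\E_i[e^{-W_i}] = \sigma_i,
\]
and the sandwich forces \eqref{convergence_sigma}. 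The main delicate step is reconciling the a.s.\ ``$+o(1)$" approximations with the weaker assumption of convergence only in distribution of $W_i(A_i)$; Slutsky's theorem handles this cleanly, but one must verify separately that the indicator event has $\P_i$-probability tending to $1$, which is why the uniqueness of $j(i)$ and the a.s.\ dominance of $\widetilde{\Pi}_n^{(i)}$ under $\P_i$ are both essential.
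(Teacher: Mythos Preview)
Your proposal is correct and follows essentially the same route as the paper: the paper sets up the same sandwich via $\underline{H}_i^{(a)}$ and $\overline{H}_i^{(a)}$, asserts (by deferring to Lemma 5.1 of \cite{Dayanik_2012}) that both converge in distribution to $W_i-\log a_{j(i)i}$, and then invokes the boundedness and continuity of $x\mapsto e^{-x}$ on $[b,\infty]$ to pass to the limit. You have simply made explicit the dominant-term extraction and the Slutsky/portmanteau steps that the paper leaves to the cited reference, including the verification that $\P_i\{d_A=i,\,\theta\le\tau_A<\infty\}\to 1$.
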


Now we obtain the limiting distribution of \eqref{def_overshoot}.
Similarly to \cite{Dayanik_2012}, we have a decomposition $\Phi_n^{(i)} =
\sum_{l=\theta \vee 1}^n h_{i j(i)} (X_l) + \xi_n(i,j(i))$, where, for $n \geq 1$,
\begin{align*}
  \xi_n(i,j(i)) &:= \sum_{l=1}^{n \wedge (\theta -1)} h_{ij(i)}(X_l) +
  \epsilon_n (i,j(i)) - \log \Big( 1 + \sum_{j \in \mathcal{M}_0 \setminus
    \{i,j(i) \}} \exp (\Lambda_n(i,j(i)) - \Lambda_n(i,j)) \Big).
\end{align*}
By Lemmas  \ref{lemma_limit_l} and \ref{lemma_limit_2} and because the last term of the right-hand side converges to zero $\P_i-a.s.$, the remaining term $\xi_n(i,j(i))$ converges to a finite random variable, and hence is \emph{slowly changing} (cf. Definitions 5.2 and 5.3 of \cite{Dayanik_2012}).  This allows us to apply nonlinear renewal theory.


Define a stopping time,  $T_i := \inf \big\{ n \geq 1: \sum_{l=1}^{n} h_{ij(i)}(X_l)> 0\big\}$,
and random variable $W_i$ whose distribution is given by
\begin{align}
  \P_i \{ W_i\leq w \} = \frac {\int_1^w \P_i^{(0)} \big\{
      \sum_{l=1}^{T_i} h_{ij(i)}(X_l) >
      s\big\} \diff s}{\E_i^{(0)} \big[ \sum_{l=1}^{T_i}h_{ij(i)}(X_l)\big]},\qquad 0 \leq w <
  \infty. \label{dist_overshoots}
\end{align}

\begin{proposition} \label{proposition_asymptotic_kappa} Fix $i \in
  \M$ and suppose $j(i) \in \Gamma_i$ is unique. Then ${R_i^{(a)} (\tau_A,d_A)} /
  {A_i} \xrightarrow{A_i \downarrow 0} a_{j(i) i} \E_i [ e^{-W_i}]$. Therefore, a higher-order approximation for
  Problem \ref{problem_bayes_risk} can be achieved by setting in
  (\ref{g_minimizer}), $\sigma_i := a_{j(i) i} \E_i \left[ e^{- W_i}\right]$. 

\end{proposition}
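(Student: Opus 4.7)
The plan is to reduce the proposition to a weak-convergence statement for the overshoot and then to invoke nonlinear renewal theory. By Lemma \ref{lemma_sigma}, once we show that the overshoot $W_i(A_i) = \Phi_{\tau_A^{(i)}}^{(i)} + \log A_i$ in \eqref{def_overshoot} converges under $\P_i$ in distribution to the random variable $W_i$ specified by \eqref{dist_overshoots} as $A_i\downarrow 0$, the conclusion ${R_i^{(a)}(\tau_A,d_A)}/{A_i}\to a_{j(i)i}\E_i[e^{-W_i}]$ is immediate, and (\ref{convergence_sigma}) then forces the higher-order choice $\sigma_i=a_{j(i)i}\E_i[e^{-W_i}]$ in \eqref{g_minimizer}.

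The main step is thus the convergence of the overshoot, which I would handle via the decomposition
\[
\Phi_n^{(i)} = \sum_{l=\theta\vee 1}^{n} h_{ij(i)}(X_l) + \xi_n(i,j(i))
\]
recalled just before the statement. Since we assume $j(i)\in\Gamma_i$, we have $h_{ij(i)}(x)=\log(f_i(x)/f_{j(i)}(x))$, so conditionally on $\theta$ the process $\big(\sum_{l=\theta\vee 1}^{n} h_{ij(i)}(X_l)\big)_{n\ge \theta}$ is a classical random walk under $\P_i^{(\theta)}$ with i.i.d.\ increments of positive mean $q(i,j(i))=l(i)>0$ and first-crossing behavior governed by the standard renewal measure. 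The limiting excess distribution for such a walk at a high boundary is precisely \eqref{dist_overshoots}, so the task is to transfer that limit to the nonlinear boundary crossing of $\Phi_n^{(i)}$.

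For that transfer I would verify that $\xi_n(i,j(i))$ is a slowly changing perturbation in the sense of Woodroofe and Siegmund (cf.\ Definitions 5.2 and 5.3 in \cite{Dayanik_2012}). The constituent pieces $\sum_{l=1}^{n\wedge(\theta-1)}h_{ij(i)}(X_l)$, $L_n^{(i)}$ and the $L_n^{(j)}$, $K_n^{(j)}$ that make up $\epsilon_n(i,j(i))$ all converge $\P_i$-a.s.\ to finite limits by Lemma \ref{lemma_limit_l} (Example 1) and Lemma \ref{lemma_limit_2} (Example 2); and uniqueness of $j(i)$ makes $\Lambda_n(i,j(i))-\Lambda_n(i,j)\to-\infty$ $\P_i$-a.s.\ for each $j\ne j(i)$, which kills the log-sum-exp correction term in $\xi_n(i,j(i))$. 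These a.s.\ convergences imply both $\xi_n/n\to 0$ and the ``max-oscillation'' tightness $\max_{k\le n^\delta}|\xi_{n+k}-\xi_n|\xrightarrow{\P_i}0$ required by nonlinear renewal theory. Applying Theorem 4.5 / 4.10 of \cite{Woodroofe1982} (equivalently, the version in \cite{MR799155}) then yields $W_i(A_i)\Rightarrow W_i$ with the limit law in \eqref{dist_overshoots}, completing the proof.

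The main obstacle is the verification of the slow-change hypothesis: $\xi_n(i,j(i))$ mixes four qualitatively different ingredients (the transient-phase partial sum, the bounded correction coming from $\eta$ and the $\nu_j$, the nonrandom-walk martingale-like processes $L_n^{(j)}, K_n^{(j)}$, and the log-sum-exp term), and in Example 2 the $\rho^{(j)}_n$ and $\varrho^{(j)}$ corrections must be carefully absorbed. However, as the excerpt points out, this parallels Section 5 of \cite{Dayanik_2012} almost verbatim, with Lemmas \ref{lemma_limit_l} and \ref{lemma_limit_2} providing exactly the a.s.\ convergence inputs needed, so the bookkeeping, though intricate, is routine.
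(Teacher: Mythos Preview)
Your proposal is correct and follows essentially the same route as the paper: reduce to Lemma~\ref{lemma_sigma}, use the decomposition $\Phi_n^{(i)} = \sum_{l=\theta\vee 1}^n h_{ij(i)}(X_l) + \xi_n(i,j(i))$, verify via Lemmas~\ref{lemma_limit_l} and~\ref{lemma_limit_2} that $\xi_n(i,j(i))$ converges $\P_i$-a.s.\ to a finite limit and is therefore slowly changing, and then invoke nonlinear renewal theory to obtain the overshoot limit law \eqref{dist_overshoots}. The paper's argument is slightly more terse but structurally identical.
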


\subsubsection{For the case $j(i) = 0$.}  Now suppose $j(i) = 0$ and is unique.   As in Remarks  \ref{remark_regarding_l} (4)  and \ref{remark_example_2} (4),  $\varrho^{(j)}$ and $q^{(0)}(i,j) + \varrho^{(j)}$ in Examples 1 and 2, respectively, are minimized when $j=i$ and is unique. Here we assume that
\begin{align}
\begin{split}
&a_{yi} = a_{zi} =: a^{(0)}_i, \quad y,z \in \Y_0, \quad \textrm{ in Example 1}\\
&a_{yi} = a_{zi} =: a^{(0)}_i, \quad y,z \in \Y_i^{(0)},  \quad \textrm{ in Example 2}.
\end{split} \label{assumption_additional}
\end{align}
Similarly to the above, we have a decomposition: for every $n
  \geq 1$, 
\begin{align*}
\Phi_n^{(i)} &= \left\{ \begin{array}{ll} \sum_{l=\theta \wedge 1}^n   h_{i0}(X_l) + \sum_{l=1}^{n\wedge (\theta-1)} h_{i0}(X_l) + \epsilon_n(i,0) + \eta_n^{(0)}(i), & \textrm{in Example 1}, \\
 \sum_{l=\theta \wedge 1}^n   h^{(0)}_{ii}(X_l) + \sum_{l=1}^{n\wedge (\theta-1)} h^{(0)}_{ii}(X_l) + \epsilon_n^{(0)}(i,i) + \eta_n^{(0)}(i), & \textrm{in Example 2},
\end{array} \right.
\end{align*}
where
\begin{align*}
  \eta_n^{(0)}(i) &:= \left\{ \begin{array}{ll} \log \Big( 1 + \sum_{j
        \in \mathcal{M} \setminus
        \{i\}} \exp (\Lambda_n(i,0) - \Lambda_n(i,j)) \Big), & \textrm{in Example 1},\\
      \log \Big( 1 + \sum_{j \in \mathcal{M} \setminus \{i \}} \exp
      (\Lambda_n^{(0)}(i,i) - \Lambda_n(i,j)) & \\
      \hspace{8em}+ \sum_{j \in \mathcal{M} \setminus \{i\}} \exp
      (\Lambda_n^{(0)}(i,i) - \Lambda_n^{(0)}(i,j)) \Big), &
      \textrm{in Example 2}.
    \end{array} \right. 
\end{align*}
Here, under $\P_i$, $\sum_{l=1}^{n\wedge (\theta-1)} h_{i0}(X_l)$ and $\sum_{l=1}^{n\wedge (\theta-1)} h^{(0)}_{ii}(X_l)$ are finite a.s., $\eta_n^{(0)}(i)$ converges to zero a.s. in view of the limits as in \eqref{definition_limit_llr_2}, \eqref{definition_limit_llr_3} and the fact that in Example 2 $\Lambda_n^{(0)}(i,j)/n \rightarrow q^{(0)}(i,j)+\varrho^{(j)}$ (which is uniquely minimized when $j=i$).  Hence these terms are slowly-changing.  

 It remains to show that  $\epsilon_n(i,0)$ (resp.\  $\epsilon_n^{(0)}(i,i)$ ) is slowly-changing for Example 1 (resp. Example 2).  In view of Assumption \ref{assumption_rho} and Lemmas \ref{lemma_limit_l} and  \ref{lemma_limit_2}, it holds on condition that the following holds. Notice in Example 1 that
 \begin{align*}
 \log \Big[ \sum_{j \in \M} \nu_j \big(1- \sum_{t=0}^n \rho_t^{(j)} \big)  \Big] = \log  \big(1- \sum_{t=0}^n \rho_t^{(i)} \big) + \log \nu_i + \log \Big[ 1 + \sum_{j \in \M \backslash \{i\}} \frac {\nu_j} {\nu_i} \frac {1- \sum_{t=0}^n \rho_t^{(j)} }   {1- \sum_{t=0}^n \rho_t^{(i)} } \Big],
\end{align*}
 where the last term converges to zero by Assumption \ref{assumption_rho} and is hence slowly-changing. 
\begin{assumption}   \label{assump_ucip}For both Examples 1 and 2, we assume $\zeta_n^{(i)} := -{\log \big( 1- \sum_{t=0}^n \rho_t^{(i)}  \big)} - n \varrho^{(i)}$ is uniformly continuous in probability, i.e., for any $\varepsilon > 0$, there exists $\delta > 0$ such that 
\begin{align*}
\max_{0 \leq k \leq n \delta} |\zeta_{n+k}^{(i)}-\zeta_n^{(i)}| < \varepsilon, \quad \textrm{ for all } n \geq 1.
\end{align*} 
\end{assumption}
 Let  $\widetilde{T}_i := \inf \big\{ n \geq 1: \sum_{l=1}^{n} h_{i0}(X_l)> 0\big\}$ (resp.\ $\widetilde{T}_i := \inf \big\{ n \geq 1: \sum_{l=1}^{n} h_{ii}^{(0)}(X_l)> 0\big\}$) for Example 1 (resp. Example 2),
and the distribution of random variable $\widetilde{W}_i$  is given by
\begin{align*}
  \P_i \{ \widetilde{W}_i\leq w \} = \left\{ \begin{array}{ll} \frac {\int_0^w \P_i^{(0)} \big\{
      \sum_{l=1}^{T_i} h_{i0}(X_l) >
      s\big\} \diff s}{\E_i^{(0)} \big[ \sum_{l=1}^{T_i}h_{i0}(X_l)\big]}, & \textrm{in Example 1}\\ \frac {\int_0^w \P_i^{(0)} \big\{
      \sum_{l=1}^{T_i} h_{ii}^{(0)}(X_l) >
      s\big\} \diff s}{\E_i^{(0)} \big[ \sum_{l=1}^{T_i}h_{ii}^{(0)}(X_l)\big]}, &  \textrm{in Example 2} \end{array} \right\}, \qquad 0 \leq w <
  \infty. 
\end{align*}
Following the same arguments as in the case $j(i) \in \Gamma_i$, we have the following.
\begin{proposition} \label{proposition_asymptotic_kappa} Fix $i \in
  \M$ and suppose $j(i)=0$ is unique. Moreover, suppose \eqref{assumption_additional} and Assumption \ref{assump_ucip} hold. Then ${R_i^{(a)} (\tau_A,d_A)} /
  {A_i} \xrightarrow{A_i \downarrow 0} a_{i}^{(0)} \E_i [ e^{-\widetilde{W}_i}]$. Therefore, a higher-order approximation for
  Problem \ref{problem_bayes_risk} can be achieved by setting in
  (\ref{g_minimizer}), $\sigma_i := a_{i}^{(0)} \E_i \left[ e^{- \widetilde{W}_i}\right]$. 

\end{proposition}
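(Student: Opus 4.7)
The plan is to follow the same blueprint as the proof of the companion proposition for the case $j(i)\in\Gamma_i$, sandwiching $R_i^{(a)}(\tau_A,d_A)/A_i$ between two expectations of the form $\E_i[\exp(-H_i^{(a)}(A_i))]$ and identifying the common limit by applying nonlinear renewal theory to the overshoot $W_i(A_i)=\Phi_{\tau_A^{(i)}}^{(i)}+\log A_i$. Concretely, Lemma \ref{lemma_changing_measure} together with the bounds from Remark \ref{remark_bounds_tdl} gives
\[
\E_i\!\big[e^{-\overline{H}_i^{(a)}(A_i)}\big]\;\le\;R_i^{(a)}(\tau_A,d_A)/A_i\;\le\;\E_i\!\big[e^{-\underline{H}_i^{(a)}(A_i)}\big].
\]
Assumption \eqref{assumption_additional} is used precisely here: because $a_{yi}$ is constant across $\Y_0$ in Example 1 (resp.\ across $\Y_i^{(0)}$ in Example 2), the weights $\overline{a}_{0i}$ and $\underline{a}_{0i}$ of the dominant $j=0$ term in $\overline{G}_i^{(a)}$ and $\underline{G}_i^{(a)}$ both equal $a_i^{(0)}$, so the upper and lower bounds collapse to a common limit.

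Next I would show that $W_i(A_i)$ converges in distribution under $\P_i$ to $\widetilde{W}_i$ by invoking the nonlinear renewal theorem as in Section 5 of \cite{Dayanik_2012}. The required decomposition is exactly the one displayed in the excerpt,
\[
\Phi_n^{(i)}=\sum_{l=\theta\vee 1}^{n} h_{i0}(X_l)+\xi_n\ \text{(Ex.\ 1)},\qquad \Phi_n^{(i)}=\sum_{l=\theta\vee 1}^{n} h_{ii}^{(0)}(X_l)+\xi_n\ \text{(Ex.\ 2)},
\]
where $\xi_n$ collects the pre-$\theta$ partial sum, the residual $\epsilon_n(i,0)$ or $\epsilon_n^{(0)}(i,i)$, and the remainder $\eta_n^{(0)}(i)$. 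The underlying random walks have strictly positive drift $l(i)$ by Remarks \ref{remark_regarding_l} and \ref{remark_example_2}, so nonlinear renewal theory identifies the overshoot limit as the $\widetilde{W}_i$ defined via $\widetilde{T}_i$. Once this convergence in distribution is in hand, the dominated-convergence argument of Lemma 5.1 of \cite{Dayanik_2012}, applied to the bounded continuous map $x\mapsto e^{-x}$ on $[b,\infty)$, yields $R_i^{(a)}(\tau_A,d_A)/A_i\to a_i^{(0)}\E_i[e^{-\widetilde{W}_i}]$ as claimed.

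The crux, and the main obstacle, is verifying that $\xi_n$ is \emph{slowly changing} in the sense of \cite{Dayanik_2012}. The pre-$\theta$ sum is a.s.\ finite because $\theta<\infty$ a.s., and $\eta_n^{(0)}(i)$ tends a.s.\ to $0$ using the strict uniqueness of $j(i)=0$ (all other $\Lambda_n(i,j)-\Phi_n^{(i)}$ diverge to $+\infty$) combined with the limits in \eqref{definition_limit_llr_2}, \eqref{definition_limit_llr_3}. For $\epsilon_n(i,0)$ in Example 1, the algebraic rewriting of $\log\big[\sum_j\nu_j(1-\sum_t\rho_t^{(j)})\big]$ recorded just before Assumption \ref{assump_ucip} shows that $\epsilon_n(i,0)=-\zeta_n^{(i)}+L_n^{(i)}-\log\nu_i$ up to a remainder that vanishes by Assumption \ref{assumption_rho} and the uniqueness of the minimizer of $\varrho^{(\cdot)}$; the analogous identity holds for $\epsilon_n^{(0)}(i,i)$ in Example 2. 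The term $L_n^{(i)}$ converges a.s.\ to a finite random variable by Lemma \ref{lemma_limit_l}(iv) (resp.\ Lemma \ref{lemma_limit_2}(iv)) and is therefore slowly changing. It is $\zeta_n^{(i)}$ that need not be slowly changing on its own, and Assumption \ref{assump_ucip} is imposed precisely to supply its uniform continuity in probability — the final ingredient needed to conclude slow change of $\xi_n$ and to invoke nonlinear renewal theory.

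The identification $\sigma_i=a_i^{(0)}\E_i[e^{-\widetilde{W}_i}]$ then follows by substituting the limit just obtained into \eqref{convergence_sigma} and plugging into \eqref{sigma_expression}--\eqref{g_minimizer}; this is a book-keeping step once the convergence of $R_i^{(a)}(\tau_A,d_A)/A_i$ has been established.
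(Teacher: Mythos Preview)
Your proposal is correct and follows essentially the same approach as the paper: sandwich $R_i^{(a)}(\tau_A,d_A)/A_i$ via $\overline{H}_i^{(a)}$ and $\underline{H}_i^{(a)}$, verify that the slowly-changing residual $\xi_n$ in the decomposition of $\Phi_n^{(i)}$ satisfies the hypotheses of the nonlinear renewal theorem (using Lemmas \ref{lemma_limit_l}(iv)/\ref{lemma_limit_2}(iv) for $L_n^{(i)}$, uniqueness of $j(i)=0$ for $\eta_n^{(0)}(i)\to 0$, and Assumption \ref{assump_ucip} for $\zeta_n^{(i)}$), and conclude via Lemma 5.1 of \cite{Dayanik_2012}. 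One minor slip: in your identity for $\epsilon_n(i,0)$ the sign of $\zeta_n^{(i)}$ is reversed and the $\log\nu_i$ terms actually cancel, but since UCIP is symmetric in sign and constants are slowly changing, this does not affect the argument.
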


\section{Numerical Examples} \label{section_numerics}
In this section, we verify the effectiveness of the asymptotically optimal strategies through a series of numerical experiments.   Because the optimality results are fundamentally relying on the existence of the limits $l(i,j)$ as in Assumption \ref{condition_a_s_convergence}, we first verify their existence numerically and show that they can be obtained efficiently via simulation.  We then evaluate the performance of the asymptotically optimal strategies  and also the rate of convergence.

%
%
%

\subsection{Verification of Assumption \ref{condition_a_s_convergence}} \label{numerics_convergence}
We consider both the case $X$ is i.i.d.\ in each of the closed sets as studied in Section \ref{section_convergence_results} and also the non-i.i.d.\ case where each closed set may contain multiple states.

In order to verify the convergence results in Section \ref{section_convergence_results}, we consider Example 2 of Subsection \ref{subsection_example_2} with $M=2$ and the hidden Markov chain  $\Y_1 = \{1 \}$, $\Y_0^{(1)} = \{ (1,1), (1,2)\}$, $\Y_2 = \{2 \}$, and $\Y_0^{(2)} = \{ (2,1), (2,2)\}$ with 
\begin{align*}
P = \begin{array}{c} (1,1) \\  (1,2) \\ 1 \\  (2,1) \\  (2,2) \\ 2 \end{array} \left[ \begin{array}{ll|l|ll|l} 
 .85 &  .15 &  0 &         0     &    0    &     0 \\
         0  &  .9  &  .1     &    0     &    0    &     0\\
\hline
         0    &     0  &  1   &      0      &   0     &    0 \\
\hline
         0   &      0     &    0 &   .8     &    0  &  .2 \\
         0     &    0     &    0     &    0  &  .95  & .05 \\
\hline
         0     &    0     &    0     &    0   &      0  &  1 \end{array}\right] \quad \textrm{and} \quad \eta = \left[ \begin{array}{c} .25 \\ .25 \\ 0\\ .25 \\ .25 \\ 0\end{array}\right].
\end{align*}
Under $\P_1$, $Y$ starts at either $(1,1)$ or $(1,2)$ and gets absorbed by $1$, while under $\P_2$ it starts at either $(2,1)$ or $(2,2)$ and gets absorbed by $2$.
Conditionally given $Y_0 = (1,1)$, the absorption time $\theta$  is a sum of two independent geometric random variables with parameters $0.15$ and $0.1$; conditionally on $Y_0 = (1,2)$,  it is  geometric with parameters $0.1$.  It is easy to show that the exponential tail \eqref{exponential_tail} under $\P_1$ is $\varrho^{(1)} = |\log (1-\min(0.1, 0.15))|$.  On the other hand, regarding $\Y_2 \cup \Y_0^{(2)}$, the absorption time $\theta$ is a mixture of two geometric random variables $0.2$ and $0.05$.  Its exponential tail is  $\varrho^{(2)} = |\log (1-\min (0.2, 0.05))|$.

For the observation process $X$, we assume that it is normally
distributed with a common variance $1$ and its conditional mean given
$Y$ is $\{ \lambda(y); y \in \Y \}$. As is assumed in Example 2, we let
$\lambda^{(0)}_1 := \lambda((1,1)) = \lambda((1,2))$ and
$\lambda_2^{(0)} := \lambda((2,1)) = \lambda((2,2))$.  We also let
$\lambda_k := \lambda(k)$ for $k=1,2$. The Kullback-Leibler divergence
is $q(i,j) = \big( \lambda_i - \lambda_j \big)^2/2$ for every
$i\in\M$, $j\in\M \setminus \{i\}$ and $q^{(0)}(i,j) = \big( \lambda_i
- \lambda^{(0)}_j\big)^2/2$ for every $i,j\in\M$.  Here we assume that
$\lambda^{(0)}_1 = 0.1$, $\lambda_1 = 0.7$, $\lambda^{(0)}_2 = 0$ and
$\lambda_2 = 0.2$.  Using Proposition \ref{prop_convergence_example_2},
the analytical limit values $l(i,j)$ are obtained and are listed in
the last column of Table \ref{tab:limits-for-numerical-example_2}.

In Figure \ref{figure_llr_2}, we plot  sample paths of $\Lambda_n(1,\cdot)/n$ under $\P_1$ and $\Lambda_n(2,\cdot)/n$ under $\P_2$ along with the theoretical limit $l(i,j)$.  In order to verify their almost sure convergence, we show in Table \ref{tab:limits-for-numerical-example_2} the statistics on the position at time $n=500, 1000, 1500$ based on $1000$ samples for each.   We indeed see that the mean value approaches the theoretical limit and the standard deviation diminishes as $n$ increases, verifying the almost sure limit of the LLR processes.

\begin{figure}[htp]
\begin{center}
\begin{tabular}{cc}
  \ifpdf
  \includegraphics[width=0.5\textwidth] {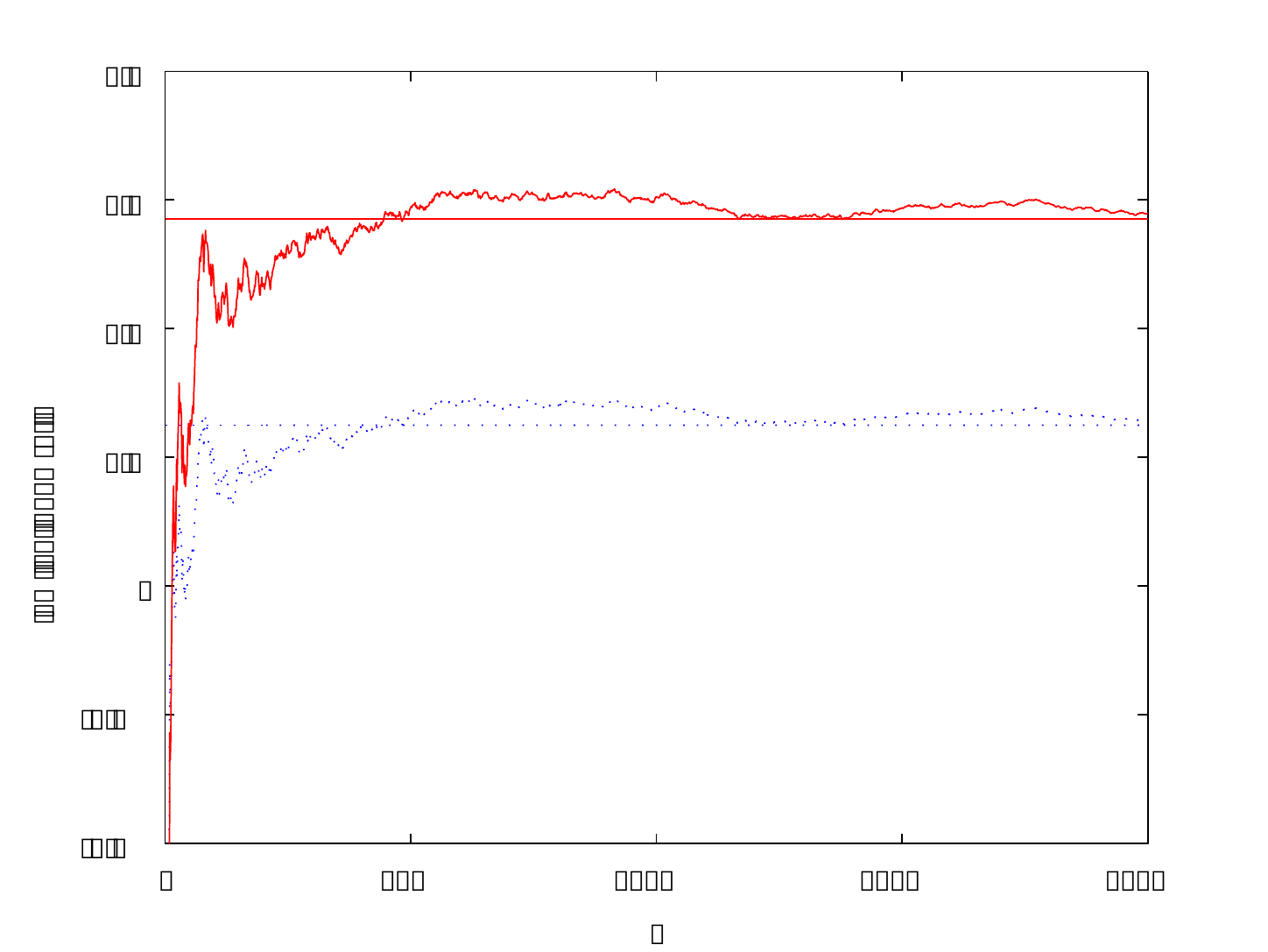} 
  &
  \includegraphics[width=0.5\textwidth] {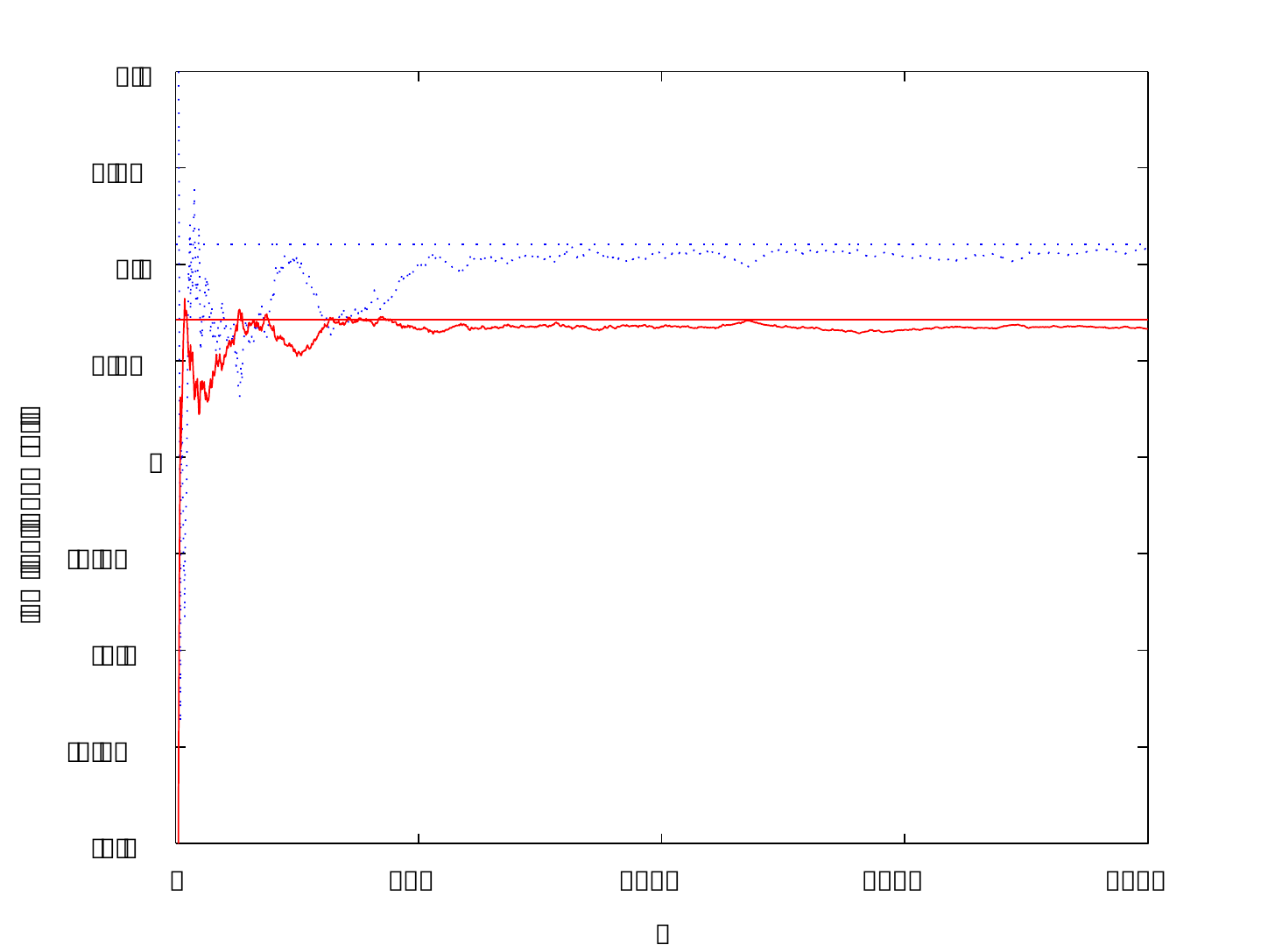} 
\\ 
(a) $\Lambda_n(1,\cdot)/n$ under $\P_1$ & (b) $\Lambda_n(2,\cdot)/n$ under $\P_2$  \\
\fi
\end{tabular}
\caption[Realizations of the LLR processes.]{Sample realizations
  of LLR Processes: (a) $\Lambda_n(1,0)/n$ (solid)  and $\Lambda_n(1,2)/n$ (dotted) under $\P_1$ and  (b) $\Lambda_n(2,0)/n$ (solid) and $\Lambda_n(2,1)/n$ (dotted)  under $\P_2$.  The theoretical limit values $l(\cdot, \cdot)$ are also given.
}
\label{figure_llr_2}
  \end{center}
\end{figure}

\begin{table}[!t]
  \centering
  \begin{tabular}[t]{c|ccc|c}
     n   & $500$ & $1000$ & $1500$ & theoretical values\\ \hline
    $\Lambda_n(1,0)$ under $\P_1$ &.2790 (.0272) & .2818 (.0193)   &  .2830 (.0154) & .2854\\
    $\Lambda_n(1,2)$ under $\P_1$  & .1218 (.0225) & .1231 (.0161) & .1238 (.0128) &  .1250\\
     $\Lambda_n(2,0)$ under $\P_2$ & .0721 (.0084) &    .0715 (.0062)   & .0714 (.0051)  & .0713\\
    $\Lambda_n(2,1)$ under $\P_2$  & .0948 (.0096)& .1006 (.0063) & .1032 (.0048)& .1104\\
  \end{tabular}\vspace*{0.5em}
  \caption{The LLR process at time $n = 500, 1000,1500$: mean and standard deviation along with theoretical values.}
  \label{tab:limits-for-numerical-example_2}
\end{table}


We now consider the non-i.i.d.\  case where each closed set consists of multiple states.  Because this case has not been covered in Section \ref{section_convergence_results} and the limit $l(i,j)$ has not been derived, we shall confirm this numerically via simulation. We consider a Markov chain with $M = 2$, $\Y_0 = \{0\}$,  $\Y_1 = \{(1,1),(1,2),(1,3)\}$ and  $\Y_2 = \{(2,1),(2,2)\}$.  We consider two cases with transitional matrices:
\begin{align*}
P_1 := \begin{array}{c} 0\\  (1,1) \\ (1,2) \\  (1,3) \\  (2,1) \\ (2,2) \end{array}  \left[ \begin{array}{l|lll|ll} 
.75 & .05 & .05 & .05 & .05 & .05 \\
\hline
    0 & .5 & .2 & .3 & 0&  0 \\
    0& .3 &.5 & .2 & 0 & 0 \\
    0 & .3 & .2 & .5 & 0 & 0 \\
\hline
    0 & 0 & 0 & 0 & .7 & .3 \\
    0 & 0 & 0 & 0 & .2 & .8 \end{array}\right], \quad  P_2 := \left[ \begin{array}{l|lll|ll} 
.75 & .05 & .05 & .05 & .05 & .05 \\
\hline
    0 & 0 & 1 & 0 & 0&  0 \\
    0& 0 &0 & 1 & 0 & 0 \\
    0 & 1 & 0 & 0 & 0 & 0 \\
\hline
    0 & 0 & 0 & 0 & 0 & 1 \\
    0 & 0 & 0 & 0 & 1 & 0 \end{array}\right].
\end{align*}
Here we model the acyclic case for the former and cyclic case for the latter.  For both cases, we assume the initial distribution $\eta = \left[ 1, 0, 0, 0, 0,0 \right]$ and $X$ is again normally distributed with variance $1$ and mean function $\lambda = \left[  0, 0.2, 0.4, 0.6, -0.2, -0.4  \right]$.


We plot in Figure \ref{figure_llr} sample paths of the LLR processes $\Lambda_n(1,\cdot)/n$ under $\P_1$ and $\Lambda_n(2,\cdot)/n$ under $\P_2$ and also show in Table \ref{tab:limits-for-numerical-example} the statistics on their positions at $n = 500,1000,1500$ based on $1000$ sample paths.  We observe that these processes indeed converge to deterministic limits almost surely.  In fact due to the simple structure of the transient set $\Y_0$, the convergence seems to be faster than what are observed in Figure \ref{figure_llr_2} and Table \ref{tab:limits-for-numerical-example_2}.  It is also noted that the convergence holds regardless of the cyclic/acyclic structure of the closed sets.


\begin{figure}[htp]
\begin{center}
\begin{tabular}{cc}
  \ifpdf
  \includegraphics[width=0.5\textwidth] {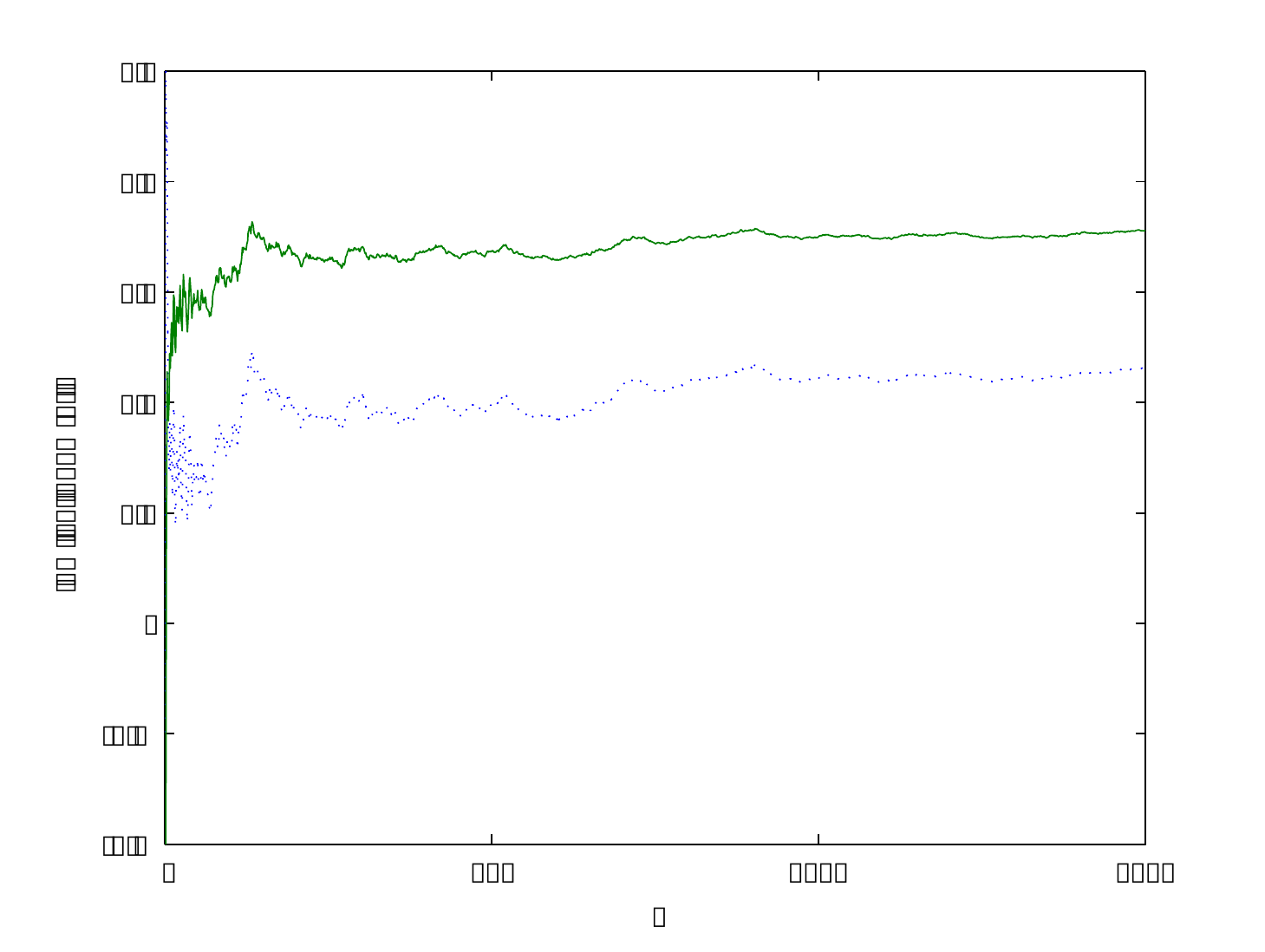} 
  &
  \includegraphics[width=0.5\textwidth] {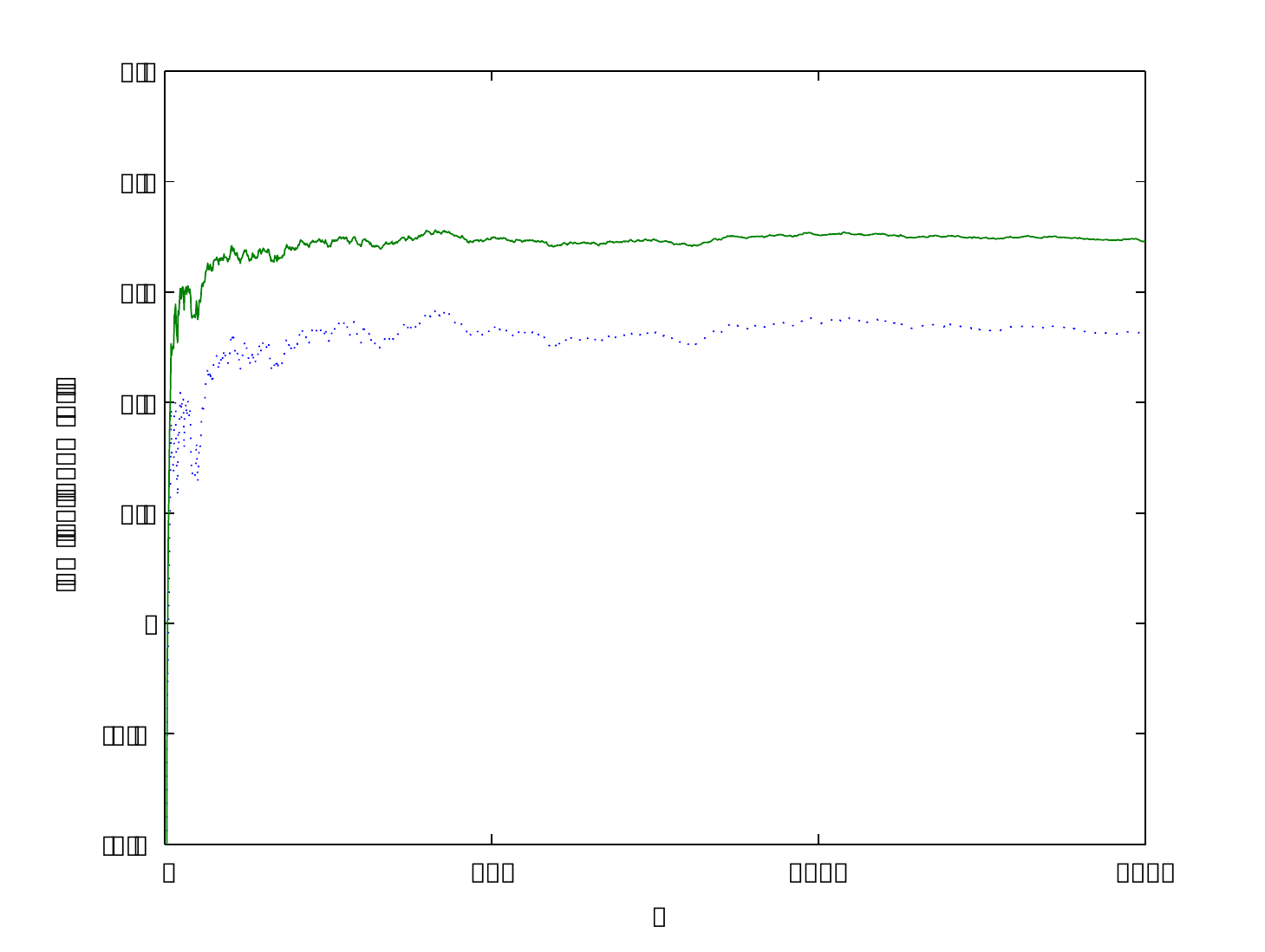} 
\\ 
(a-1) case 1 under $\P_1$ & (b-1) case 1 under $\P_2$ \\
  \includegraphics[width=0.5\textwidth] {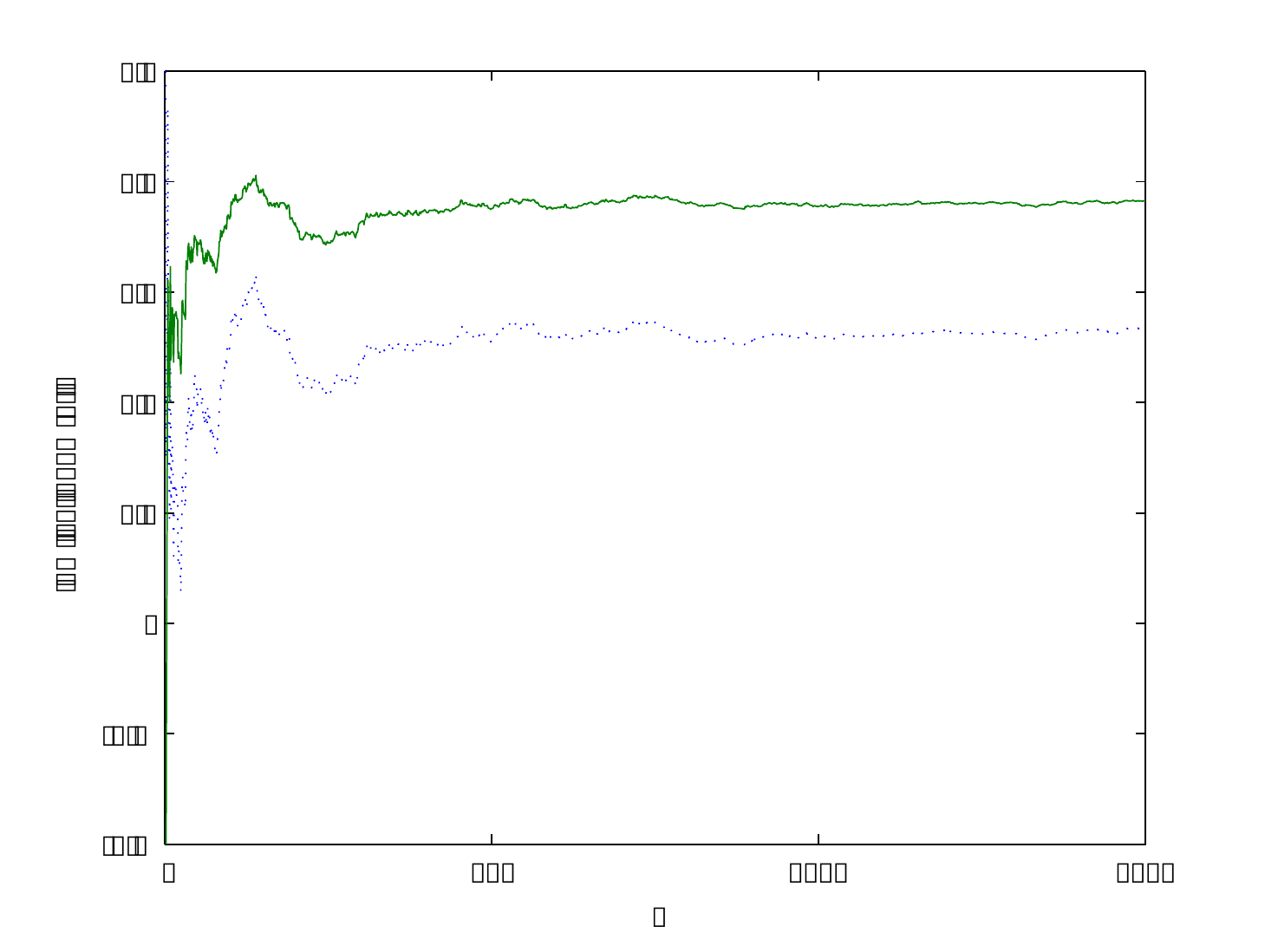} 
  &
  \includegraphics[width=0.5\textwidth] {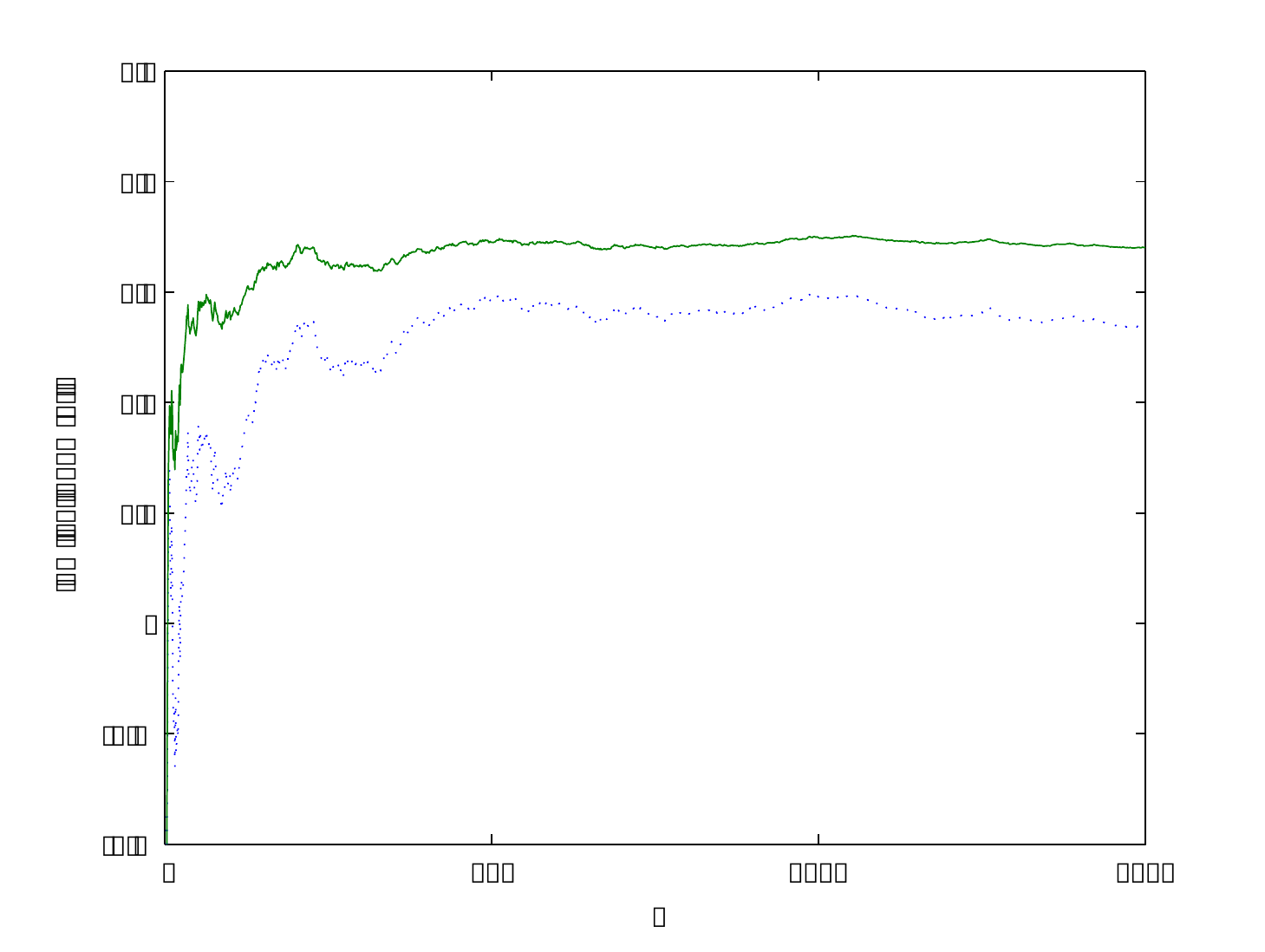} 
\\ 
(a-2) case 2 under $\P_1$ & (b-2) case 2 under $\P_2$ \\
\fi
\end{tabular}
\caption[Realizations of the LLR processes.]{Sample realizations
  of LLR Processes: (a) $\Lambda_n(1,0)/n$ (solid)  and $\Lambda_n(1,2)/n$ (dotted) under $\P_1$ and  (b) $\Lambda_n(2,0)/n$ (solid) and $\Lambda_n(2,1)/n$ (dotted)  under $\P_2$.
}
\label{figure_llr}
  \end{center}
\end{figure}

\begin{table}[!t]
  \centering
  \begin{tabular}[t]{c|ccc}
        & $500$ & $1000$  & $1500$\\ \hline
    $\Lambda(1,0)$ under $\P_1$ & .3636 (.0171)& .3641 (.0130)& .3639 (.0104)    \\
    $\Lambda(1,2)$ under $\P_1$  & .2451 (.0299) & .2456 (.0228) & .2450 (.0182)  \\
     $\Lambda(2,0)$ under $\P_2$ & .3364 (.0139) & .3376 (.0104) & .3375 (.0085)         \\
    $\Lambda(2,1)$ under $\P_2$  & .2393 (.0293)& .2415 (.0221) & .2412 (.0181)\\
  \end{tabular} \\
case 1 \\
  \begin{tabular}[t]{c|ccc}
        & $500$ & $1000$  & $1500$\\ \hline
    $\Lambda(1,0)$ under $\P_1$ & .3775 (.0188)& .3801 (.0133)& .3804 (.0112)     \\
    $\Lambda(1,2)$ under $\P_1$  & .2575 (.0313) & .2610 (.0220) &  .2614 (.0186)  \\
     $\Lambda(2,0)$ under $\P_2$ & .3340 (.0146) & .3362 (.0101) &.3361 (.0081)        \\
    $\Lambda(2,1)$ under $\P_2$  & .2508 (.0324) & .2564 (.0224) & .2567 (.0182) \\
  \end{tabular} \\ case 2 \vspace*{0.5em}
  \caption{The LLR process at time $n = 500, 1000,1500$: mean and standard deviation.}
  \label{tab:limits-for-numerical-example}
\end{table}

%


\subsection{Numerical results on asymptotic optimality}  We now evaluate the asymptotically optimal strategy in comparison with the optimal Bayes risk focusing on Problem \ref{problem_bayes_risk} with $m=1$.  \cite{Dayanik2009}  showed that the problem can be reduced to an optimal stopping problem of the posterior probability process $\Pi$, and in theory the value function can be approximated via value iteration in combination with discretization.  In practice, however, the state space increases exponentially in the number of states $|\Y|$, and 
it is computationally feasible only when $|\Y|$ is small (typically at most three or four).  Moreover, we need to deal with small detection delay costs $c$ and hence the resulting stopping regions tend to be very small in practical applications.  For this reason, the approximation is affected severely by discretization errors as well.  Here in order to provide reliable approximation to the optimal Bayes risk, we consider the following simple examples.

We suppose $M = 2$, $\Y_0 = \{ (0,1), (0,2)\}$,  $\Y_1 = \{1\}$ and $\Y_2 = \{2\}$ and consider case 1 with
\begin{align*}
P_1 := \begin{array}{c} (0,1)\\  (0,2) \\ 1 \\  2 \end{array} \left[ \begin{array}{llll} 
.95 & 0 & .05 & 0 \\
0 & .85 & 0  & .15 \\
0 & 0 & 1 & 0 \\
0 & 0 & 0 & 1
 \end{array}\right] \quad \textrm{and} \quad \eta_1 := \left[ \begin{array}{c} .5 \\ .5 \\ 0 \\ 0 \end{array} \right]
\end{align*}
and case 2 with
\begin{align*}
P_2 := \begin{array}{c} (0,1)\\  (0,2) \\ 1 \\  2 \end{array} \left[ \begin{array}{llll} 
.95 & .05 & 0 & 0 \\
0 & .85 & .05  & .1 \\
0 & 0 & 1 & 0 \\
0 & 0 & 0 & 1
 \end{array}\right] \quad \textrm{and} \quad \eta_2 := \left[ \begin{array}{c} 1 \\ 0 \\ 0 \\ 0 \end{array} \right].
\end{align*}
Case 1 has been considered in \cite{Dayanik2009} where $\theta$ is geometric with parameter $.05$ under $\P_1$ and $.15$ under $\P_2$. In Case 2, it is  a sum of two geometric random variables under $\P$.
For $X$, we assume for both cases that it takes values in $E  = \{1,2,3,4\}$ with probabilities $\P\{ X_1 = k | Y_1=y\}=f(y,k)$ given by
\begin{align*}
f = \left[ \begin{array}{llll} 
.25 & .25 & .25 & .25 \\
.25 & .25 & .25  & .25 \\
.4 & .3 & .2 & .1 \\
.1 & .2 & .3 & .4
 \end{array}\right].
\end{align*}


\begin{table}[!t]
  \centering
  \begin{tabular}[t]{c|c|c|c}
      c  & asymptotic & optimal  & ratio\\ \hline
    $.5$  & 1.45357 (1.44847,1.45867)&1.02350 (1.01881,1.02819)&  1.42020\\
    $.1$  &1.01413 (1.01106,1.01719)& 0.80195 (0.79510,0.80880)  &  1.26458 \\
    $.05$  & 0.72380 (0.72149,0.72611)& 0.62557 (0.61869,0.63245)& 1.15702 \\
    $.01$ & 0.25023 (0.24907,0.25139)& 0.24226 (0.23763,0.24690) &  1.03288 \\
    $.005$ & 0.14843 (0.14756,0.14929) & 0.14440 (0.14106,0.14775) &  1.02787 \\
  \end{tabular} \\ case 1 \\ 
  \begin{tabular}[t]{c|c|c|c}
      c  & asymptotic & optimal  & ratio\\ \hline
    $.5$  & 1.61202 (1.60403,1.62000) & 1.01375 (1.01099,1.01651)&  1.59015 \\
    $.1$  & 1.12962 (1.12617,1.13307)&  0.91009 (0.90408,0.91610) & 1.24122\\
    $.05$  & 0.81023 (0.80785,0.81261) & 0.73136 (0.72473,0.73800) & 1.10783  \\
    $.01$ & 0.27809 (0.27684,0.27933) & 0.27454 (0.27011,0.27896) & 1.01293 \\
    $.005$ & 0.16287 (0.16194,0.16380) & 0.16269 (0.15893,0.16644) & 1.00115 \\
  \end{tabular} \\ case 2
\vspace*{0.5em}
  \caption{Comparison with the optimal value function.}
  \label{tab:optimal}
\end{table}

We set the detection delay function $c = [0, 0, \bar{c}, \bar{c}]$ and the terminal decision loss function $a_{yi} = 1$ for $y \notin \Y_i$ and it is zero otherwise.  The limits $l(i,j)$ can be analytically computed by Propositions \ref{prop_convergence_example_1} and the asymptotically optimal strategy can be constructed analytically.  Here we set the value $\sigma_i = a_{j(i)i} = 1$ and hence $A_i(c) = c_i /l(i)$, for every $i \in \M$.
 In order to compute the optimal Bayes risk, we first discretize the state space of $\Pi$ ($|\Y|-1$-simplex) by $70^{|\Y|-1}$ mesh and then obtain the stopping regions by solving the optimality equation provided in \cite{Dayanik2009}  via value iteration.  The optimal Bayes risk is then approximated via simulation based on $10,000$ paths.   The risk under the asymptotically optimal strategy is approximated based on $100,000$ paths. 

Table \ref{tab:optimal} shows the results.  It shows the approximated Bayes risk (with 95\% confidence interval) for both strategies and also the ratio between the two.    It can be seen that the ratio indeed converges to $1$.  In fact, the results show that the convergence is fast and it approximates the optimal Bayes risk precisely even for a moderate value of $\bar{c}$.  The proposed strategy can be derived analytically and its corresponding Bayes risk can be computed instantaneously via simulation.


\section*{Acknowledgments} Savas Dayanik was supported by the
T{\"U}B{\.I}TAK Research Grant 109M714. Kazutoshi Yamazaki was partially supported by Grant-in-Aid for Young Scientists (B) No.\ 22710143, the Ministry of Education, Culture, Sports, Science and Technology, and  by Grant-in-Aid for Scientific Research (B) No.\  2271014, Japan Society for the Promotion of Science.

\appendix
\section{Proofs}

\subsection{Proof of Lemma \ref{lemma_lower_bound}}
The proof of Lemma \ref{lemma_lower_bound} requires the following lemmas. 
\begin{lemma} \label{lemma_lower_bound_2}
For every $i \in \M$ and $j \in \M_0 \setminus \{i\}$, $L > 0$, $\gamma > 0$ and $k > 1$, we have
\begin{multline*}
\inf_{(\tau,d) \in \overline{\Delta}(\overline{R})}\P_i \Big\{ \sum_{m=1}^{\tau-1} c(Y_m) >  \gamma L \Big\} \\ \geq 1 - \frac {\sum_{y \in \Y \setminus \Y_j} \overline{R}_{yi}} {\nu_i}  - \frac {e^{k L l(i,j)}} {\nu_i} \sum_{y \in \Y_j}\overline{R}_{yi} - \P_i \Big\{ \sup_{n \leq \theta + L} \Lambda_n (i,j) > k L l(i,j) \Big\} - \P_i \Big\{ \min_{n \geq L}\frac {\sum_{m=1}^{n-1} c(Y_m)} {n} < \gamma \Big\}.
\end{multline*}
\end{lemma}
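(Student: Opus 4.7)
I would carry out the proof in two stages.

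\emph{Stage 1 (reduction to a bound on $\P_i\{\tau\leq L\}$).} On the event $\{\tau>L\}\cap\{\min_{n\geq L}\sum_{m=1}^{n-1}c(Y_m)/n\geq\gamma\}$, taking $n=\tau$ in the minimum yields $\sum_{m=1}^{\tau-1}c(Y_m)\geq\gamma\tau>\gamma L$. Hence
\begin{align*}
\P_i\Big\{\sum_{m=1}^{\tau-1}c(Y_m)>\gamma L\Big\}\;\geq\;1-\P_i\{\tau\leq L\}-\P_i\Big\{\min_{n\geq L}\frac{\sum_{m=1}^{n-1}c(Y_m)}{n}<\gamma\Big\},
\end{align*}
which accounts for the last two terms on the right-hand side of the lemma. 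It remains to show that for every $(\tau,d)\in\overline{\Delta}(\overline{R})$,
\begin{align*}
\P_i\{\tau\leq L\}\;\leq\;\frac{\sum_{y\in\Y\setminus\Y_j}\overline{R}_{yi}}{\nu_i}+\frac{e^{kLl(i,j)}}{\nu_i}\sum_{y\in\Y_j}\overline{R}_{yi}+\P_i\Big\{\sup_{n\leq\theta+L}\Lambda_n(i,j)>kLl(i,j)\Big\}.
\end{align*}

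\emph{Stage 2a (correct-decision late detection via change of measure).} I decompose $\{\tau\leq L\}$ into the disjoint pieces $\{d=i,\theta\leq\tau\leq L\}$, $\{d=i,\tau<\theta,\tau\leq L\}$, and $\{d\neq i,\tau\leq L\}$. For the first piece, apply Lemma \ref{lemma_changing_measure} with $F=\{d=i,\tau\leq L\}\in\F_\tau$. For $j\in\M\setminus\{i\}$ this gives
\begin{align*}
\nu_i\E_i\big[1_{\{d=i,\theta\leq\tau\leq L\}}e^{-\Lambda_\tau(i,j)}\big]=\P\{d=i,\mu=j,\theta\leq\tau\leq L\}\leq\widetilde{R}_{ji}(\tau,d)\leq\sum_{y\in\Y_j}\overline{R}_{yi},
\end{align*}
while the second identity in Lemma \ref{lemma_changing_measure}, applied with $\Lambda_\tau(i,0)$ and $\widetilde{R}_{0i}(\tau,d)\leq\sum_{y\in\Y_0}\overline{R}_{yi}$, covers $j=0$. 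Since $\theta\leq\tau\leq L$ forces $\tau\leq\theta+L$, on the good event $G_j:=\{\sup_{n\leq\theta+L}\Lambda_n(i,j)\leq kLl(i,j)\}$ we have $e^{-\Lambda_\tau(i,j)}\geq e^{-kLl(i,j)}$, whence
\begin{align*}
\P_i\{d=i,\theta\leq\tau\leq L\}\leq\frac{e^{kLl(i,j)}}{\nu_i}\sum_{y\in\Y_j}\overline{R}_{yi}+\P_i(G_j^c).
\end{align*}

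\emph{Stage 2b (remaining pieces via aggregation of TDL constraints).} With the convention $\overline{R}_{yi}:=0$ for $y\in\Y_i$, summing $\widetilde{R}_{ki}(\tau,d)\leq\sum_{y\in\Y_k}\overline{R}_{yi}$ over $k\in\M_0\setminus\{i,j\}$ gives the key identity
\begin{align*}
\sum_{k\in\M_0\setminus\{i,j\}}\widetilde{R}_{ki}(\tau,d)=\P\big\{d=i,\,Y_\tau\in\Y\setminus(\Y_i\cup\Y_j),\,\tau<\infty\big\}\leq\sum_{y\in\Y\setminus\Y_j}\overline{R}_{yi}.
\end{align*}
Translating this through $\nu_i\P_i\{\cdot\}\leq\P\{\cdot\cap\{\mu=i\}\}$ and using that under $\P_i$ the state $Y_\tau$ necessarily lies in $\Y_i\cup\Y_0$ bounds $\P_i\{d=i,\tau<\theta\}$ directly, and together with a symmetric change-of-measure argument (obtained by swapping $i$ with $k$ in Lemma \ref{lemma_changing_measure} and invoking $G_j$ once more) also bounds $\P_i\{d\neq i,\tau\leq L\}$. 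Combining the bounds of Stages 2a and 2b yields the inequality on $\P_i\{\tau\leq L\}$ and completes the proof.

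\emph{Main obstacle.} The delicate step is the misidentification piece $\{d\neq i,\tau\leq L\}$: the $\overline{\Delta}(\overline{R})$-constraints most directly bound this through $\widetilde{R}_{ik}(\tau,d)\leq\sum_{y\in\Y_i}\overline{R}_{yk}$, which involves $\overline{R}_{yk}$ rather than $\overline{R}_{yi}$. Re-expressing these misidentification events via Lemma \ref{lemma_changing_measure} applied with the roles of $i$ and $k$ reversed, and then absorbing the residual exponential factors into the good-event probability $\P_i(G_j^c)$ already present on the right-hand side, following the analogous treatment in \cite{Dayanik_2012}, is the only nontrivial bookkeeping.
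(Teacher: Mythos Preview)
Your Stage~1 and Stage~2a are correct and match the paper's approach in spirit; the paper likewise reduces to bounding $\P_i\{\tau>L\}$ from below (via the cruder $\P_i\{\tau>L\}\ge \P_i\{\tau-\theta>L\}$) and handles the piece $\{d=i,\theta\le\tau\le\theta+L\}$ by exactly the change-of-measure/overshoot argument you wrote.

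The genuine gap is in Stage~2b, the piece $\{d\neq i,\tau\le L\}$. Your proposed ``symmetric change of measure, swapping $i$ with $k$'' does not deliver what you need: applying Lemma~\ref{lemma_changing_measure} with the roles of $i$ and $k$ reversed produces expectations under $\P_k$, not $\P_i$, and the constraints in $\overline{\Delta}(\overline{R})$ that control $\{d=k,\mu=i\}$ are $\widetilde{R}_{ik}(\tau,d)\le \sum_{y\in\Y_i}\overline{R}_{yk}$, which involve $\overline{R}_{yk}$ with second index $k\neq i$. These quantities simply do not appear on the right-hand side of the lemma, and there is no mechanism by which ``absorbing residual exponential factors into $\P_i(G_j^c)$'' converts them into the required $\overline{R}_{yi}$ terms. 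Indeed, for a strategy that never declares $d=i$, every $\widetilde{R}_{ji}(\tau,d)$ vanishes while $\P_i\{d\neq i\}=1$, so no bound built solely from the $\overline{R}_{\cdot\,i}$ constraints can control this piece for an \emph{arbitrary} strategy.

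The paper sidesteps your decomposition by $d$ entirely: it invokes the bound from \cite[Lemma~A.1]{Dayanik_2012}, which packages the complement of $\{d=i,\theta\le\tau\}$ into the single term $R_i^{(1)}(\tau,d)=\frac{1}{\nu_i}\sum_{k\in\M_0\setminus\{i\}}\widetilde{R}_{ki}(\tau,d)$ and then bounds that sum uniformly over $\overline{\Delta}(\overline{R})$ by $\frac{1}{\nu_i}\sum_{y\in\Y\setminus\Y_i}\overline{R}_{yi}$. If you want to salvage your route, the cleanest fix is to drop the attempt to bound $\P_i\{d\neq i\}$ separately and instead follow the paper in citing (or reproving) the Dayanik--Goulionis--Dragalin--Tartakovsky type inequality for $\P_i\{\tau-\theta>L\}$ directly.
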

\begin{proof}
We have
\begin{align*}
\P_i \Big\{ {\sum_{m=1}^{\tau-1} c(Y_m)}  > \gamma L \Big\} \geq \P_i \Big\{ \frac {\sum_{m=1}^{\tau-1} c(Y_m)} {\tau} \geq \gamma, \tau > L \Big\} = \P_i \left\{ \tau > L \right\} - \P_i \Big\{ \frac {\sum_{m=1}^{\tau-1} c(Y_m)} {\tau} < \gamma, \tau > L \Big\}.
\end{align*}
Moreover, we have
\begin{multline*}
\P_i \Big\{ \frac {\sum_{m=1}^{\tau-1} c(Y_m)} {\tau} < \gamma, \tau > L \Big\} \leq \P_i \Big\{ \inf_{n \geq \tau}\frac {\sum_{m=1}^{n-1} c(Y_m)} {n} < \gamma, \tau > L \Big\} \\ \leq \P_i \Big\{ \inf_{n \geq L}\frac {\sum_{m=1}^{n-1} c(Y_m)} {n} < \gamma, \tau > L \Big\} \leq \P_i \Big\{ \inf_{n \geq L}\frac {\sum_{m=1}^{n-1} c(Y_m)} {n} < \gamma \Big\}.
\end{multline*}
As in the proof of Lemma A.1 of \cite{Dayanik_2012}, 
\begin{align*}
\P_i \left\{ \tau  > L \right\} \geq \P_i \left\{ \tau-\theta  > L \right\} \geq 1 - R_i^{(1)} (\tau,d) - \frac {e^{k L l(i,j)}} {\nu_i} \widetilde{R}_{ji} (\tau,d) - \P_i \big\{ \sup_{n \leq \theta + L} \Lambda_n (i,j) > k L l(i,j) \big\}.
\end{align*}
Combining the above and take infimum over $\overline{\Delta} (\overline{R})$,
\begin{multline*}
\inf_{(\tau,d) \in \overline{\Delta}(\overline{R})} \P_i \Big\{ \sum_{m=1}^{\tau-1} c(Y_m) > \gamma L  \Big\}
\geq 1 - \sup_{(\tau,d) \in \overline{\Delta}(\overline{R})}R_i^{(1)} (\tau,d) - \frac {e^{k L l(i,j)}} {\nu_i} \sup_{(\tau,d) \in \overline{\Delta}(\overline{R})} \widetilde{R}_{ji} (\tau,d) \\ - \P_i \Big\{ \sup_{n \leq \theta + L} \Lambda_n (i,j) > k L l(i,j) \Big\} - \P_i \Big\{ \min_{n \geq L} \frac {\sum_{m=1}^{n-1} c(Y_m)} n < \gamma \Big\}.
\end{multline*}
Therefore the lemma holds because $(\tau,d) \in \overline{\Delta}(\overline{R})$ implies that $R_i^{(1)}(\tau,d) \leq {\sum_{y \in \Y \setminus \Y_j} \overline{R}_{yi}/\nu_i}$ and $\widetilde{R}_{ji}(\tau,d) \leq \sum_{y \in \Y_j}\overline{R}_{yi}$.
\end{proof}

\begin{lemma} \label{lemma_for_lower_bound}
Fix $0 < \delta < 1$, $i \in \M$ and $j(i)$. We have
\begin{align*}
&\liminf_{ \overline{R}_i \downarrow 0} \inf_{(\tau,d) \in \overline{\Delta}
(\overline{R})} \P_i \Big\{ \sum_{m=1}^{\tau-1} c(Y_m) \geq \delta \frac {c_i {\big|\log \big( \frac 1 {\nu_i}{ \sum_{y \in \Y_{j(i)}}\overline{R}_{yi}} \big)\big|}} { l(i)}\Big\} \geq 1.
\end{align*}
\end{lemma}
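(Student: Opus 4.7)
The plan is to feed the uniform bound of Lemma \ref{lemma_lower_bound_2} (applied with $j=j(i)$, so $l(i,j(i))=l(i)$) into the target event and tune the three free parameters $L$, $\gamma$, $k$ so that every error term on the right-hand side tends to zero as $\overline{R}_i \downarrow 0$, while $\gamma L$ hits exactly the threshold appearing in the claim. Let us write $\bar r := \tfrac{1}{\nu_i}\sum_{y\in\Y_{j(i)}}\overline R_{yi}$ so that the target threshold is $\delta\, c_i |\log \bar r|/l(i)$; under the ratio assumption \eqref{bounds_r_ratio} the quantity $\bar r\downarrow 0$ together with $\overline R_i\downarrow 0$, and $|\log\bar r|\uparrow \infty$.

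Fix any $\alpha\in(\delta,1)$ and then fix $k\in(1,1/\alpha)$, and set
\begin{equation*}
 L=L(\overline R_i) := \frac{\alpha\, |\log \bar r|}{l(i)},\qquad \gamma := \frac{c_i\,\delta}{\alpha}.
\end{equation*}
Then $\gamma L=\delta\, c_i|\log\bar r|/l(i)$ matches the threshold in the claim, and $\gamma<c_i$ because $\delta<\alpha$. Plugging into Lemma \ref{lemma_lower_bound_2} (with $l(i,j(i))=l(i)$), the second error term becomes
\begin{equation*}
\frac{e^{kLl(i)}}{\nu_i}\sum_{y\in\Y_{j(i)}}\overline R_{yi} = e^{k\alpha |\log\bar r|}\,\bar r = \bar r^{\,1-k\alpha}\xrightarrow{\bar r\downarrow 0} 0,
\end{equation*}
because $k\alpha<1$ by choice. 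The first error term $\tfrac{1}{\nu_i}\sum_{y\in\Y\setminus\Y_{j(i)}}\overline R_{yi}$ vanishes as $\overline R_i\downarrow 0$ directly from the definition of the limit mode.

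It remains to show that the two probability-type error terms vanish as $L\uparrow\infty$. For the last one, Assumption \ref{condition_a_s_convergence} is not needed: the Markov chain $Y$ is absorbed almost surely and has a stationary distribution on each closed set $\Y_i$, so the ergodic theorem gives $n^{-1}\sum_{m=1}^{n-1}c(Y_m)\to c_i$ $\P_i$-a.s., whence (since $\gamma<c_i$) the event $\{\inf_{n\ge L}n^{-1}\sum_{m=1}^{n-1}c(Y_m)<\gamma\}$ is contained, for $L$ large enough, in $\{N>L\}$ for the $\P_i$-a.s.\ finite random threshold $N$ beyond which the deviation is smaller than $c_i-\gamma$; hence its probability tends to $0$. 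For the term $\P_i\{\sup_{n\le \theta+L}\Lambda_n(i,j(i))>kL\,l(i)\}$, use that $\Lambda_n(i,j(i))/n\to l(i)$ $\P_i$-a.s.\ by Assumption \ref{condition_a_s_convergence}: for any $\varepsilon\in(0,(k-1)l(i))$ there is a $\P_i$-a.s.\ finite random $N$ with $\Lambda_n(i,j(i))\le n(l(i)+\varepsilon)$ for all $n\ge N$, and $\sup_{n<N}\Lambda_n(i,j(i))$ is $\P_i$-a.s.\ finite too (using Assumption \ref{finiteness_ratio}). Combining, $\sup_{n\le \theta+L}\Lambda_n(i,j(i))\le C+(\theta+L)(l(i)+\varepsilon)$ for some $\P_i$-a.s.\ finite $C$, and this is eventually dominated by $kL\,l(i)$ once $L$ is large enough (depending on $\omega$ through $C$ and $\theta$). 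Hence the probability vanishes.

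Putting all four error bounds together, the right-hand side of Lemma \ref{lemma_lower_bound_2} converges to $1$ uniformly in $(\tau,d)\in\overline\Delta(\overline R)$ as $\overline R_i\downarrow 0$, which gives the claim after taking $\liminf$. The only delicate step is the joint scheduling of $\alpha,k$: we need $\delta<\alpha<1$ and $1<k<1/\alpha$, which is possible exactly because $\delta<1$; this is the main place where the strict inequality $0<\delta<1$ in the statement is actually used.
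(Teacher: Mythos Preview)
Your proof is correct and follows essentially the same route as the paper's: both feed Lemma \ref{lemma_lower_bound_2} with $j=j(i)$ and tune $L,\gamma,k$ so that $\gamma L$ matches the target threshold while the error terms vanish. The only cosmetic difference is that the paper picks the specific splitting $L=\sqrt{\delta}\,|\log\bar r|/l(i)$, $\gamma=\sqrt{\delta}\,c_i$ (i.e.\ your $\alpha=\sqrt{\delta}$), whereas you allow any $\alpha\in(\delta,1)$; and the paper dispatches the term $\P_i\{\sup_{n\le\theta+L}\Lambda_n(i,j(i))>kLl(i)\}$ by citing \cite[Lemma~A.2]{Dayanik_2012}, while you spell out the elementary a.s.\ argument directly.
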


\begin{proof}
Fix $\overline{R}$ such that $0 < \sum_{y \in \Y_{j(i)}}\overline{R}_{yi} < \nu_i$. Then we have $- \log ( \sum_{y \in \Y_{j(i)}}\overline{R}_{yi}/ {\nu_i} ) = |\log ( \sum_{y \in \Y_{j(i)}}\overline{R}_{yi}/ {\nu_i} )|$. Now in Lemma \ref{lemma_lower_bound_2}, set $j=j(i)$ and 
\begin{align*}
L := \sqrt{\delta} \frac {\big|\log \left( \frac 1 {\nu_i}{ \sum_{y \in \Y_{j(i)}}\overline{R}_{yi}} \right)\big|} {l(i)} \quad \textrm{and} \quad \gamma = \sqrt{\delta} c_i,
\end{align*}
and choose $k > 1$ such that $0 < k \sqrt{\delta} < 1$. Then we have
\begin{align*}
&\inf_{(\tau,d) \in \overline{\Delta}(\overline{R})} \P_i \left\{ \sum_{m=1}^{\tau-1} c(Y_m) \geq \delta \frac { c_i {\left|\log \left( \frac 1 {\nu_i}{ \sum_{y \in \Y_{j(i)}}\overline{R}_{yi}} \right)\right|}} {l(i)} \right\} \\
&\qquad \geq 1 - \frac {\sum_{y \in \Y \setminus \Y_{j(i)}} \overline{R}_{yi}} {\nu_i}  - \left( \frac  {\sum_{y \in \Y_{j(i)}}\overline{R}_{yi}} {\nu_i} \right)^{1-k \sqrt{\delta}} \\ & \qquad - \P_i \left\{ \sup_{n \leq \theta + L} \Lambda_n (i,j(i)) > kL l(i)\right\} - \P_i \left\{ \min_{n \geq L} \frac {\sum_{m=1}^{n-1} c(Y_m)} n < \gamma \right\}.
\end{align*}
This vanishes as $\overline{R}_i \downarrow 0$ because $0 < 1-k \sqrt{\delta} < 1$ and $\overline{R}_i  \downarrow 0 \Longrightarrow L \uparrow \infty$ and $0 < \gamma < c_i$.  Indeed, by \cite{Dayanik_2012}, Lemma A.2. for any $k > 1$, $\P_i \big\{ \sup_{n \leq \theta + L} \Lambda_n (i,j(i)) > k L l(i)\big\} \xrightarrow{L \uparrow \infty} 0$,
and because ${\sum_{m=1}^{n-1} c(Y_m)} / n$ converges $\P_i$-a.s.\ to $c_i$, $
\P_i \big\{ \min_{n \geq L} \frac {\sum_{m=1}^{n-1} c(Y_m)} n < \gamma \big\} \xrightarrow{L \uparrow \infty} 0$,
any $0 < \gamma < c_i$.
\end{proof}

%


\begin{proof}[Proof of Lemma \ref{lemma_lower_bound}]
Fix a set of positive constants $\overline{R}$, $0 < \delta < 1$ and $(\tau,d) \in \Delta$.
We have by Markov inequality
\begin{align*}
\E_i \left[ \frac {D_i^{(c,m)}(\tau)} {\left( { \frac {c_i} {l(i)} \left|\log \left( \frac 1 {\nu_i}{\sum_{y \in \Y_{j(i)}}\overline{R}_{yi}} \right) \right|} \right)^m}  \right] &\geq \delta
\P_i \left\{  \frac {\left(\sum_{m=1}^{\tau-1} c(Y_m)\right)^m} {\left( { \frac {c_i} {l(i)} \left|\log \left( \frac 1 {\nu_i}{\sum_{y \in \Y_{j(i)}}\overline{R}_{yi}} \right) \right|} \right)^m} \geq
\delta \right\} \\
&= \delta \P_i \left\{  \sum_{m=1}^{\tau-1} c(Y_m) \geq \delta^{\frac 1 m}
\frac {c_i {\left|\log \left( \frac 1 {\nu_i}{ \sum_{y \in \Y_{j(i)}}\overline{R}_{yi}} \right)\right|}} { l(i)}
\right\}.
\end{align*}
By taking infimum and then limits on both sides,
\begin{multline*}
\liminf_{\overline{R}_i \downarrow 0}\inf_{(\widetilde{\tau},\widetilde{d}) \in \overline{\Delta}(\overline{R})} \E_i \left[
\frac {D_i^{(c,m)}(\widetilde{\tau})} {\left( { \frac {c_i} {l(i)} \left|\log \left( \frac 1 {\nu_i}{\sum_{y \in \Y_{j(i)}}\overline{R}_{yi}} \right) \right|} \right)^m}
\right] \\ \geq \delta \liminf_{\overline{R}_i \downarrow 0} \inf_{(\widetilde{\tau},\widetilde{d}) \in
\overline{\Delta}(\overline{R})}\P_i \left\{ \sum_{m=1}^{\widetilde{\tau}-1} c(Y_m) \geq
\delta^{\frac 1 m} \frac {c_i {\left|\log \left( \frac 1 {\nu_i}{ \sum_{y \in \Y_{j(i)}}\overline{R}_{yi}} \right)\right|}} { l(i)}  \right\},
\end{multline*}
which is greater than or equal to $\delta$ by Lemma
\ref{lemma_for_lower_bound}. Therefore, the claim holds because $0 <
\delta < 1$ is arbitrary.
\end{proof}

\subsection{Proof of Lemma \ref{lemma_example_1_lambda}}
We first simplify $\widetilde{\alpha}_n^{(i)} (x_1,...,x_n)$ as in  \eqref{def_alpha_sum}.  Corresponding to the event that $Y$ is absorbed by $\Y_i$ at time $t \geq 0$, let the set of paths of $Y$ until time $n$ is given by $\mathcal{S}_{t,n}^{(i)}$ where
\begin{align*}
\mathcal{S}_{0,n}^{(i)} := \{ (i, \ldots, i)\} \quad \textrm{and} \quad \mathcal{S}_{t,n}^{(i)} := \{ (y_0, \ldots, y_n): y_0,\ldots, y_{t-1} \in \mathcal{Y}_{0}, \, y_t, \ldots, y_n = i \}, \quad 1 \leq t \leq n, 
\end{align*}
and by assumption
\begin{align*}
f(y_0, \cdot) \equiv \cdots \equiv f(y_{t-1}, \cdot)\equiv f_0(\cdot) \quad \textrm{and} \quad  f(y_t, \cdot) \equiv \cdots \equiv f(y_n, \cdot)  \equiv f_i(\cdot),  \quad y= (y_0, \ldots, y_n) \in \mathcal{S}_{t,n}^{(i)}. 
\end{align*}
\begin{lemma} \label{lemma_alpha_i}
For any $n \geq 1$ and $(x_1,\ldots x_n) \in E^n$,
\begin{align*}
\widetilde{\alpha}_n^{(i)} (x_1,...,x_n)   &= \left\{ \begin{array}{ll}  \prod_{l=1}^n
    f_0(x_l) \sum_{i \in \M} \nu_i \big(1- \sum_{t=0}^n \rho_t^{(i)} \big), & i = 0, \\ \nu_i \left[ \rho_0^{(i)}  \prod_{k=1}^{n}  f_i(x_k)  +  \sum_{t=1}^n  \rho_t^{(i)}\prod_{k=1}^{t-1}  f_0(x_k)  \prod_{k=t}^n  f_i(x_k) \right], & i \in \M. \end{array} \right.
\end{align*}
\end{lemma}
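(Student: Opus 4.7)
The plan is a direct expansion of the definitions \eqref{def_alpha_sum} and \eqref{def_alpha}, exploiting two structural features of Example 1: (a) $\Y_i = \{i\}$ is a singleton absorbing state for every $i \in \M$, so $P(i,i) = 1$ and the chain never leaves $i$ after arriving; and (b) $f(y,\cdot) \equiv f_0(\cdot)$ for every $y \in \Y_0$. Since $\Y_i$ is a singleton for $i \in \M$, \eqref{def_alpha_sum} collapses to $\widetilde{\alpha}_n^{(i)} = \alpha_n(x_1,\ldots,x_n,i)$; for $i = 0$ we sum over all $y \in \Y_0$.

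For $i \in \M$, I would partition the inner sum in \eqref{def_alpha} according to the first hitting time $t := \min\{k \leq n : y_k = i\}$ along the path $(y_0,\ldots,y_{n-1},y_n = i)$. By (a), $y_t = y_{t+1} = \cdots = y_n = i$ whenever $t \leq n$, and each emission density on this tail equals $f_i(\cdot)$; by (b), the emission densities on indices $1,\ldots,t-1$ reduce to $f_0(x_k)$ because $y_0,\ldots,y_{t-1} \in \Y_0$. The $t = 0$ contribution is therefore $\eta(i) \prod_{k=1}^n f_i(x_k)$, while for $1 \leq t \leq n$ the contribution factorizes as
\[
\Bigl[\sum_{(y_0,\ldots,y_{t-1}) \in \Y_0^t} \eta(y_0) \prod_{k=1}^{t-1} P(y_{k-1},y_k)\, P(y_{t-1},i) \Bigr] \prod_{k=1}^{t-1} f_0(x_k) \prod_{k=t}^n f_i(x_k).
\]
The bracketed quantity is exactly $\nu_i \rho_t^{(i)}$ by \eqref{def_rho_t_i}, and the $t=0$ coefficient satisfies $\eta(i) = \nu_i \rho_0^{(i)}$. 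Summing over $t \in \{0,1,\ldots,n\}$ yields the stated formula for $i \in \M$.

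For $i = 0$, any contributing path has $y_n \in \Y_0$, which by closedness of each $\Y_j$, $j \in \M$, forces $y_k \in \Y_0$ for every $k \leq n$ (once the chain exits $\Y_0$ it cannot return). Hence every emission density in the product is $f_0$ and $\prod_{k=1}^n f_0(x_k)$ pulls outside, leaving
\[
\sum_{(y_0,\ldots,y_n) \in \Y_0^{n+1}} \eta(y_0) \prod_{k=1}^n P(y_{k-1},y_k) \;=\; \P\{\theta > n\} \;=\; \sum_{i \in \M} \nu_i \bigl(1 - \sum_{t=0}^n \rho_t^{(i)}\bigr),
\]
where the last equality comes from conditioning on $\mu$. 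The only step that calls for real attention is the boundary case $t = n$ in the $i \in \M$ argument: the terminal transition $P(y_{n-1}, i)$, which sits outside the product $\prod_{k=1}^{n-1} P(y_{k-1},y_k) f(y_k,x_k)$ in \eqref{def_alpha}, must be grouped with the preceding $\Y_0$-transitions to reproduce $\nu_i \rho_n^{(i)}$; once the index ranges are tracked precisely, the rest is routine algebra.
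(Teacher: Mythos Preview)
Your proposal is correct and follows essentially the same approach as the paper: partition the paths in \eqref{def_alpha} by the absorption time $t$ (the paper packages this as disjoint path sets $\mathcal{S}_{t,n}^{(i)}$), factor out the emission densities as $f_0$ on $[1,t-1]$ and $f_i$ on $[t,n]$, and identify the remaining path sum as $\nu_i\rho_t^{(i)}$ via \eqref{def_rho_t_i}; the $i=0$ case is handled identically by noting that contributing paths stay in $\Y_0$ and the residual path sum equals $\P\{\theta>n\}$. The only cosmetic difference is that the paper writes the $i=0$ residual as $1-\sum_{j\in\M}\sum_{t=0}^n \nu_j\rho_t^{(j)}$ before regrouping, whereas you condition on $\mu$ directly.
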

\begin{proof}
Because $\mathcal{S}_{0,n}^{(i)}, \mathcal{S}_{1,n}^{(i)}, \ldots,  \mathcal{S}_{n,n}^{(i)}$ are mutually disjoint and 
\begin{align*}
\mathcal{S}_{0,n}^{(i)} \sqcup \mathcal{S}_{1,n}^{(i)} \sqcup \cdots \sqcup  \mathcal{S}_{n,n}^{(i)} = \{ (y_0, \ldots, y_{n-1}, i): y_0, \ldots, y_{n-1} \in \mathcal{Y}_0 \cup \{i\} \} =: \mathcal{S}^{(i)}_n
\end{align*}
 or the set of paths under which $Y$ is in $\{i\}$ at time $n$,  and because $y_n = i$ for any $y$ in $\mathcal{S}_{0,n}^{(i)}, \ldots, \mathcal{S}_{n,n}^{(i)}$, we have
\begin{align*}
\widetilde{\alpha}_n^{(i)} (x_1,\ldots, x_n)  
&=\eta(i) \prod_{k=1}^{n}  f_i(x_k) + \sum_{t=1}^n\sum_{y \in \mathcal{S}_{t,n}^{(i)}}  \eta(y_0) \Big(\prod_{k=1}^{n} P(y_{k-1},y_k) f(y_k,x_k)\Big) \\
&=\eta(i) \prod_{k=1}^{n}  f_i(x_k)  +  \sum_{t=1}^n\sum_{y \in  \mathcal{S}_{t,n}^{(i)}} \Big( \eta(y_0) \Big(\prod_{k=1}^{t-1} P(y_{k-1},y_k) f(y_k,x_k) \Big) P(y_{t-1},i) f_i(x_t) \prod_{k=t+1}^n  f_i(x_k) \Big) \\
&= \eta(i) \prod_{k=1}^{n}  f_i(x_k)  +\sum_{t=1}^n \Big( \prod_{k=1}^{t-1}  f_0(x_k)  \prod_{k=t}^n  f_i(x_k) \Big) \sum_{y \in  \mathcal{S}_{t,n}^{(i)}} \Big( \eta(y_0) \prod_{k=1}^{t-1} P(y_{k-1},y_k)   P(y_{t-1},i) \Big)  \\
&=\nu_i \Big[ \rho_0^{(i)} \prod_{k=1}^{n}  f_i(x_k)  +  \sum_{t=1}^n  \rho_t^{(i)}\prod_{k=1}^{t-1}  f_0(x_k)  \prod_{k=t}^n  f_i(x_k) \Big],
\end{align*}
by \eqref{def_rho_t_i}.
On the other hand, 
\begin{multline*}
\widetilde{\alpha}_n^{(0)} (x_1,...,x_n)  = \sum_{y \in \mathcal{Y}_{0}}\alpha_n (x_1,...,x_n, y)  =  \prod_{l=1}^n
    f_0(x_l) \sum_{\mathcal{Y}^n \setminus (\cup_{i \in \mathcal{M}}\mathcal{S}_n^{(i)})}  \Big( \eta(y_0) \prod_{k=1}^{n-1} P(y_{k-1},y_k) \Big) P(y_{n-1},y) \\ = \prod_{l=1}^n
    f_0(x_l) \big(1- \sum_{i \in \M}\sum_{t=0}^n \nu_i \rho_t^{(i)} \big)  = \prod_{l=1}^n
    f_0(x_l) \sum_{i \in \M} \nu_i \big(1- \sum_{t=0}^n \rho_t^{(i)} \big).
\end{multline*}
\end{proof}

\begin{proof}[Proof of Lemma \ref{lemma_example_1_lambda}]
Fix $i \in \M$. By Lemma \ref{lemma_alpha_i},
\begin{align*}
\Lambda_n(i,0)  &= \log \left( \frac { \nu_i \big[ \rho_0^{(i)} \prod_{k=1}^n f_i(X_k) + \sum_{k=1}^n
    \rho_k^{(i)} \prod_{l=1}^{k-1} f_0(X_l) \prod_{m=k}^n f_i (X_m) \big] } {\prod_{l=1}^n
    f_0(X_l) \sum_{j \in \mathcal{M}} \nu_j \big(1- \sum_{t=0}^n \rho_t^{(j)} \big) } \right) \\
 &= \sum_{k=1}^n \log \frac { f_i(X_k)} { f_0(X_k)}  + \log \left( \frac { \nu_i \left[ \rho_0^{(i)} + \sum_{k=1}^n
    \rho_k^{(i)} \prod_{l=1}^{k-1} \frac {f_0(X_l)} {f_i(X_l)} \right] } {\sum_{j \in \M}\nu_j \big(1- \sum_{t=0}^n \rho_t^{(j)} \big) } \right) \\
&=  \sum_{k=1}^n \log \frac { f_i(X_k)} { f_0(X_k)}  + L_n^{(i)} + \log \nu_i  -  \log \big( \sum_{j \in \M} \nu_j \big(1- \sum_{t=0}^n \rho_t^{(j)} \big)  \big).
\end{align*}
For $j \in \M \backslash \{i\}$, we have
\begin{align*}
\Lambda_n(i,j) &= \log \frac {\nu_i} {\nu_j} +\log \left( \frac {  \rho_0^{(i)} \prod_{k=1}^n f_i(X_k) + \sum_{k=1}^n
    \rho_k^{(i)} \prod_{l=1}^{k-1} f_0(X_l) \prod_{m=k}^n f_i (X_m) } { \rho_0^{(j)} \prod_{k=1}^n f_j(X_k) + \sum_{k=1}^n
    \rho_k^{(j)} \prod_{l=1}^{k-1} f_0(X_l) \prod_{m=k}^n f_j (X_m) } \right) \\
&= \log \frac {\nu_i} {\nu_j} +\log \left( \prod_{k=1}^n \frac { f_i(X_k)} { f_j(X_k)} \frac {\exp L_n^{(i)}} {\exp L_n^{(j)}} \right) =  \log \frac {\nu_i} {\nu_j} +\sum_{k=1}^n \log \frac { f_i(X_k)} { f_j(X_k)}  + L_n^{(i)} - L_n^{(j)},
\end{align*}
and we can also write
\begin{align*}
\Lambda_n(i,j) 
&= \log \frac {\nu_i} {\nu_j} +\log \left( \frac 1 {\rho_n^{(j)}} \prod_{k=1}^n \frac { f_i(X_k)} { f_0(X_k)} \frac {\rho_0^{(i)}  + \sum_{k=1}^n
    \rho_k^{(i)} \prod_{l=1}^{k-1} \frac {f_0(X_l)} {f_i(X_l)} } {\frac {\rho_0^{(j)}} {\rho_n^{(j)}} \prod_{k=1}^n \frac {f_j(X_k)} {f_0(X_k)} + \sum_{k=1}^n
    \frac {\rho_k^{(j)}} {\rho_n^{(j)}} \prod_{m=k}^n \frac {f_j (X_m)} {f_0(X_m)} } \right) \\
&=  \log \frac {\nu_i} {\nu_j} -\log \rho_n^{(j)} + \sum_{k=1}^n \log \frac { f_i(X_k)} { f_0(X_k)}  + L_n^{(i)} - K_n^{(j)},
\end{align*}
as desired.
\end{proof}

\subsection{Proof of Lemma \ref{lemma_limit_l} }
The proof requires the following lemma, whose proof is similar to that of Lemma A.4 of \cite{Dayanik_2012}.

\begin{lemma}
  \label{lemma_convergence_series}
  Let $(\xi_n)_{n \geq 1}$ be a positive stochastic process and $T$
  an a.s.\ finite random time defined on the same probability space
  $(\Omega, \Ec, \P)$.  Given $T$, the random variables $(\xi_n)_{n\ge
    1}$ are conditionally independent, and $(\xi_n)_{1\le n \le T-1}$
  and $(\xi_n)_{n\ge T}$ have common conditional probability
  distributions $\P_\infty$ and $\P_0$ on $(\R, \mathscr{B}(\R))$, the
  expectations with respect to which are denoted by $\E_\infty$ and
  $\E_0$, respectively.  Suppose that $\E_\infty[\log \xi_1]$ and
  $\E_0[\log \xi_1]$ exist, and define
  \begin{gather}
    \label{eq:phi-psi-eta}
    \begin{gathered}
      \lambda := \E_0[\log \xi_1], \qquad \alpha := \E_{\infty} [
      \xi_1], \qquad \beta := \E_0 [ \xi_1], \qquad \gamma :=
      \max\{\alpha,\beta\}, \\
      \Phi_n:= \frac{1}{n} \log \prod^n_{k=1} \xi_k, \qquad 
      \psi_n := \log \Big(c+ \sum^{n}_{l=1} e^{l (\Phi_l + \delta_l)} \Big),
      \qquad \eta_n := \frac {\psi_n} n, \qquad n\ge 1
    \end{gathered}
  \end{gather}
  for some fixed constant $c > 0$ and deterministic sequence $\delta_l \xrightarrow{l \uparrow \infty} 0$. Then the following results hold under $\P$: 
  \begin{enumerate}
  \item[(i)]  We have $\eta_n \xrightarrow{n \uparrow \infty}
    \lambda_+$ a.s. 
  \item[(ii)]  If $\lambda < 0$, then the process $\psi_n$ converges as $n
    \uparrow \infty$ to a finite limit a.s. 
  \item[(iii)]  If $\gamma <\infty$, then $(|\eta_n|^r)_{n\ge 1}$ is
    uniformly integrable.
  \item[(iv)]  
    If $r\ge 1$ and $\max\{\E_{\infty}\left[|\log \xi_1|^r\right],
    \E_0\left[|\log \xi_1|^r\right]\}<\infty$, then
    $(|\Phi_n|^q)_{n\ge 1}$ is uniformly integrable for every $0\le q
    \le r$.
  \end{enumerate}
\end{lemma}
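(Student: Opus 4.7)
The plan is to mimic the proof of Lemma A.4 of \cite{Dayanik_2012}, treating $T$ as the change-point and $\P_\infty, \P_0$ as the pre- and post-change regimes. The backbone of every part is the almost sure convergence $\Phi_n \to \lambda$: since $T<\infty$ $\P$-a.s., for $n\ge T$ one writes $n\Phi_n = \sum_{k=1}^{T-1}\log\xi_k + \sum_{k=T}^{n}\log\xi_k$, and the first term is a.s.\ finite (hence $o(n)$), while the second is a sum of $n-T+1$ i.i.d.\ $\P_0$-distributed random variables with mean $\lambda$, so the strong law of large numbers delivers $\Phi_n\to\lambda$ almost surely.

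For (i), I sandwich $\eta_n$. Keeping only the last term in the defining sum gives $\psi_n\ge n(\Phi_n+\delta_n)$, and $\psi_n\ge \log c$ trivially, so $\liminf_n \eta_n \ge \lambda_+$. For the matching upper bound, given $\varepsilon>0$ I would pick a random $L$ with $\Phi_l+\delta_l \le \lambda_+ + \varepsilon$ for $l>L$; splitting $\sum_{l=1}^n e^{l(\Phi_l+\delta_l)}$ at $L$ and summing a geometric tail (or the $O(n)$ bound if $\lambda_+ + \varepsilon\le 0$) yields $\limsup_n\eta_n \le \lambda_+ + \varepsilon$, and letting $\varepsilon\downarrow 0$ finishes. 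Part (ii) then follows immediately: when $\lambda<0$, a random $L$ exists with $\Phi_l+\delta_l \le \lambda/2$ for $l>L$, so $\sum_l e^{l(\Phi_l+\delta_l)}$ converges absolutely and $\psi_n$ has a finite a.s.\ limit.

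For (iv), Jensen's inequality (valid since $r\ge 1$) gives $|\Phi_n|^r \le n^{-1}\sum_{k=1}^n |\log\xi_k|^r$; taking expectation and conditioning on $T$ yields $\sup_n \E|\Phi_n|^r \le \max\{\E_\infty[|\log\xi_1|^r], \E_0[|\log\xi_1|^r]\} < \infty$. For $0\le q<r$, this $L^r$-boundedness and the de la Vall\'ee--Poussin criterion with test function $\Psi(t)=t^{r/q}$ give uniform integrability of $|\Phi_n|^q$. The endpoint $q=r$ is handled by upgrading the SLLN to $L^r$-convergence via the Marcinkiewicz--Zygmund theorem on the i.i.d.\ post-$T$ tail of $(\log\xi_k)$, together with an $L^r$-control of the pre-$T$ contribution obtained by further conditioning on $T$.

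The main obstacle is (iii). The plan is to bound $\eta_n$ from above by $(\overline\Phi_n)_+ + o(1)$, where $\overline\Phi_n := \max_{1\le l\le n}(\Phi_l+\delta_l)$, and from below by $\min\{\Phi_n+\delta_n,\, \log c / n\}$; then $|\eta_n|^r$ is dominated, up to a constant, by $(\overline\Phi_n)_+^r + |\Phi_n|^r + o(1)$. Using the conditional independence of $(\xi_k)$ given $T$, one obtains $\E[\prod_{k=1}^l\xi_k\mid T] = \alpha^{(T-1)\wedge l}\beta^{(l-T+1)\vee 0} \le \gamma^l$, so $\E\, e^{l(\Phi_l+\delta_l)} \le (\gamma\, e^{\delta_l})^l$. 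A Doob-type maximal inequality applied to the nonnegative martingale $\gamma^{-k}\prod_{j\le k}\xi_j$ (after the usual centering) then controls $\E(\overline\Phi_n)_+^{r+\varepsilon}$ uniformly in $n$ for some small $\varepsilon>0$; combined with part (iv) this gives $\sup_n \E|\eta_n|^{r+\varepsilon}<\infty$, and hence uniform integrability of $|\eta_n|^r$. The delicate point is handling the regimes $\gamma<1$ and $\gamma\ge 1$ separately and ensuring the vanishing correction $\delta_l\to 0$ does not disturb the geometric estimates.
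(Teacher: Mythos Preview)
Your overall plan matches the paper's, which itself simply defers to Lemma~A.4 of \cite{Dayanik_2012}; parts (i), (ii), and (iv) are handled correctly. There is, however, a genuine gap in your argument for (iii).

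You dominate $|\eta_n|^r$ by (a constant times) $(\overline{\Phi}_n)_+^r + |\Phi_n|^r + o(1)$ and then write ``combined with part~(iv)'' to dispose of the $|\Phi_n|^r$ term. But part~(iv) requires $\E_\infty[|\log\xi_1|^r]$ and $\E_0[|\log\xi_1|^r]$ to be finite, whereas in (iii) the only hypothesis is $\gamma=\max\{\E_\infty[\xi_1],\E_0[\xi_1]\}<\infty$. These are not comparable: one can easily build a law for $\xi_1$ with $\E[\xi_1]<\infty$ but $\E[|\log\xi_1|^r]=\infty$ (place suitable mass near~$0$). So the appeal to~(iv) is illegitimate, and as written the argument for~(iii) is incomplete.

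The repair is immediate once you notice that the lower bound you wrote should be a \emph{max}, not a \emph{min}: since $e^{\psi_n}>c$ and $e^{\psi_n}>e^{n(\Phi_n+\delta_n)}$, one has $\eta_n\ge\max\{(\log c)/n,\ \Phi_n+\delta_n\}$. In particular $\eta_n\ge(\log c)/n$, so $(\eta_n)_-\le|\log c|/n$, which is deterministic and bounded. Thus only $(\eta_n)_+$ needs control, and your upper bound $(\eta_n)_+\le(\overline{\Phi}_n)_++\tfrac{\log(c+n)}{n}$ suffices without ever touching $|\Phi_n|^r$. One more correction: $\gamma^{-k}\prod_{j\le k}\xi_j$ is a nonnegative \emph{super}martingale (conditionally on $T$, in the filtration generated by $T,\xi_1,\dots,\xi_k$), not a martingale; the supermartingale maximal inequality $\P\{\sup_k Z_k\ge s\}\le 1/s$ then yields $\P\{(\max_{l\le n}\Phi_l)_+>t\}\le\gamma e^{-t}$ for $t>(\log\gamma)_+$, uniformly in~$n$, and the uniform integrability of $((\overline{\Phi}_n)_+^r)_{n\ge1}$ follows.
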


With this lemma, we prove Lemma \ref{lemma_limit_l}.  We first suppose $\varrho^{(j)} < \infty$. If in \eqref{eq:phi-psi-eta}, $\xi_k := e^{-\varrho^{(j)}} \frac {f_0(X_k)} {f_j(X_k)}$ and $c =\rho_0^{(j)} + \rho_1^{(j)} > 0$, 
\begin{align*}
L_n^{(j)} 
&= \log \Big( \rho_0^{(j)} + \sum_{k=1}^n
 {\rho_k^{(j)}}   e^{(k-1) (\Phi_{k-1} + \varrho^{(j)})}\Big)  = \log \Big( c+ \sum_{k=2}^n
  {\rho_k^{(j)}}   e^{(k-1) (\Phi_{k-1} + \varrho^{(j)})}\Big) \\ 
 &=  \log \Big(  c +   \sum_{k=1}^{n-1}
 {\rho_{k+1}^{(j)}}   e^{k (\Phi_k + \varrho^{(j)})} \Big) 
= \log \Big( c + \sum_{k=1}^{n-1}
\exp (\log \rho_{k+1}^{(j)} + k\varrho^{(j)})   e^{k \Phi_n } \Big) = \log \Big( c +  \sum_{k=1}^{n-1}
  e^{k (\Phi_k + \delta_k)} \Big) 
\end{align*}
where $\delta_k := (\log \rho_{k+1}^{(j)})/k +  \varrho^{(j)} \xrightarrow{k \uparrow \infty}  0$ by Assumption \ref{assumption_rho}.  
Given that $\mu=i$ and $\theta=t$ for any fixed $i\in \M$ and $t\ge
1$, the random variables $\xi_t, \xi_{t+1},\ldots$ are conditionally
i.i.d.\ with a common distribution independent of $t$; thus, the
change time $\theta$ plays the role of the random time $T$ in Lemma
\ref{lemma_convergence_series}.  Then by Lemma
\ref{lemma_convergence_series} (i) and (\ref{definition_q_i_j_0}) we
have $L_n^{(j)}/n \xrightarrow[n \uparrow \infty]{\P_i-a.s.}
\big[\int_E \big( -\varrho^{(j)} + \log \frac {f_0(x)}{f_j(x)}
\big) f_i(x) m (\diff x)\big]_+ = \big[ q(i,j)-q(i,0) - \varrho^{(j)}
\big]_+$, which proves (ii) immediately if $j\in \M\setminus\{i\}$,
and (i) and (iv) by Lemma \ref{lemma_convergence_series} (ii) if $j=i$
after noticing that  by (\ref{positiveness_kullback_leibler})
\begin{align*}
\int_E \left( -\varrho^{(i)} + \log \frac {f_0(x)}{f_i(x)}
\right) f_i(x) m (\diff x)= q(i,i) -q(i,0) - \varrho^{(i)} = -q(i,0) -
\varrho^{(i)} < 0.
\end{align*}

Similarly, if
$j \in \Gamma_i$, (v) holds by Lemma \ref{lemma_convergence_series}
(ii), since 
\begin{align*}
\int_E \left( -\varrho^{(j)} + \log \frac {f_0(x)}{f_j(x)}
\right) f_i(x) m (\diff x)= q(i,j) -q(i,0) - \varrho^{(j)} <
0
\end{align*}
 by the definition of $\Gamma_i$. By (\ref{definition_K}), the SLLN
and (ii),
\begin{multline*}
  \frac{1}{n} K_n^{(j)} = - \frac 1 n \log \rho_n^{(j)} + \frac{1}{n} \sum_{l=1}^{n \wedge (\theta -1)}
  \log \frac {f_j(X_l)}{f_0(X_l)} +
  \frac{1}{n} \sum_{l=\theta \land n}^n \log 
    \frac
    {f_j(X_l)}{f_0(X_l)}+ \frac{1}{n} L_n^{(j)}\\
  \xrightarrow[n \uparrow \infty]{\P_i-a.s.} \varrho^{(j)}  +0 -q(i,j) + q(i,0) + \left[q(i,j)-q(i,0) - \varrho^{(j)} \right]_+,
\end{multline*}
which equals $\left[ q(i,j)-q(i,0)-\varrho^{(j)}\right]_-$ and proves
(iii). For the proof of (vi), note that by Minkowski's inequality
\begin{multline*}
  \left|\frac{1}{n} L^{(j)}_n\right|^r = \left|\frac{\log (\rho_0^{(j)} + \rho_1^{(j)})}{n}
    + \frac{n-1}{n}\,\frac{\psi_{n-1}}{n-1}\right|^r \le
  \left(\left|\frac{\log (\rho_0^{(j)} + \rho_1^{(j)})}{n}\right| +
    \left|\frac{n-1}{n}\,\frac{\psi_{n-1}}{n-1}\right|\right)^r\\
  \le 2^{r-1}\left(\left|\frac{\log (\rho_0^{(j)} + \rho_1^{(j)})}{n}\right|^r +
    \left|\frac{n-1}{n}\right|^r
    \left|\frac{\psi_{n-1}}{n-1}\right|^r\right) \le
  2^{r-1}\left(\left|\frac{\log (\rho_0^{(j)} + \rho_1^{(j)})}{n}\right|^r +
    \left|\frac{\psi_{n-1}}{n-1}\right|^r\right).
\end{multline*}
Because $(|\log (\rho_0^{(j)} + \rho_1^{(j)})/n|^r)_{n\ge 1}$ is bounded, and according
to Lemma \ref{lemma_convergence_series} (iii) the process
$(|\psi_n/n|^r)_{n\ge 1}$ is uniformly integrable under $\P_i$ for
every $r\ge 1$ when (\ref{eq:sufficient-for-UI-of-L-average}) is
satisfied, we have (vi). Finally, for the proof of (vii),  (\ref{definition_K}) implies
\begin{align*}
  \Big| \frac{1}{n}K^{(j)}_n\Big|^r = \Big|- \frac 1 n  \log \rho_n^{(j)} - \varrho^{(j)} + \frac{1}{n}\log
    \prod^n_{k=1}  e^{\varrho^{(j)}} \frac{f_j(X_k)}{f_0(X_k)}
    +\frac{1}{n} L^{(j)}_n\Big|^r  \\
 \le 2^{r-1}\Big( \Big|\frac 1 n  \log \rho_n^{(j)} + \varrho^{(j)}\Big|^r + \Big|\frac{1}{n}\log \prod^n_{k=1}
      e^{\varrho^{(j)}}\frac{f_j(X_k)}{f_0(X_k)}\Big|^r +
    \Big|\frac{1}{n} L^{(j)}_n\Big|^r\Big).
\end{align*}
Because (\ref{eq:sufficient-for-UI-of-L-average}) holds,
$(|L^{(j)}_n/n|)_{n\ge 1}$ is uniformly integrable by (vi). If we set
$\xi_k := e^{\varrho^{(j)}}[f_j(X_k)/f_0(X_k)]$ for every $k\ge 1$ in
(\ref{eq:phi-psi-eta}), then (\ref{eq:sufficient-for-UI-of-K-average})
and Lemma \ref{lemma_convergence_series} (iv) imply that
$(|\frac{1}{n}\log \prod^n_{k=1}
(e^{\varrho^{(j)}}\frac{f_j(X_k)}{f_0(X_k)})|^r)_{n\ge 1}$ is uniformly
integrable. Therefore, $(|K^{(j)}_n/n|^r)_{n\ge 1}$ is uniformly
integrable, and the proof of (vii) is complete.

By Remark \ref{remark_regarding_l}  (1), it is now sufficient to prove (ii), (v) and (vi), when $\varrho^{(j)} = \infty$ (which implies $q(i,j) < \infty$ by Assumption \ref{condition_rho_q_finite}).  For any $M > q(i,j)-q(i,0)$,  $L_n^{(j)}$ is bounded by
\begin{multline*}
L_n^{(j,M)} := \log \Big( c + \sum_{k=2}^n
   (\rho_k^{(j)} \vee e^{-(k-1)M}) \prod_{l=1}^{k-1} \frac {f_0(X_l)} {f_j(X_l)} \Big) =\log \Big(  c +   \sum_{k=1}^{n-1}
 ({\rho_{k+1}^{(j)}} \vee  e^{-kM})  \prod_{l=1}^{k} \frac {f_0(X_l)} {f_j(X_l)}\Big) \\
=\log \Big(  c +   \sum_{k=1}^{n-1}
 ({\rho_{k+1}^{(j)}} e^{kM}\vee  1)  \prod_{l=1}^{k} e^{-M}\frac {f_0(X_l)} {f_j(X_l)}\Big)  =\log \Big(  c +   \sum_{k=1}^{n-1}
e^{k([M + (\log\rho_{k+1}^{(j)})/k]\vee  0)}  \exp \Big(\sum_{l=1}^{k} ({-M} + \log \frac {f_0(X_l)} {f_j(X_l)}) \Big) \Big). 
\end{multline*}
Because $[M + (\log\rho_{k+1}^{(j)})/k]\vee  0 \xrightarrow{k \uparrow \infty} 0$ by $\varrho^{(j)}=\infty$, applying Lemma 
\ref{lemma_convergence_series} (i) we obtain $L_n^{(j,M)} /n\xrightarrow[n \uparrow \infty]{\P_i-a.s.} 0$. Because  $L_n^{(j)}$ is bounded between $\log c$ and $L_n^{(j,M)}$, we obtain  $L_n^{(j)} /n\xrightarrow[n \uparrow \infty]{\P_i-a.s.} 0$ which proves (ii).  Because $L_n^{(j)}$ is increasing $\P_i$-a.s., its limit $L_\infty^{(j)}$ exists.  Moreover, because it is bounded by $L_\infty^{(j,M)} < \infty$,  $L_\infty^{(j)}$ is finite $\P_i$-a.s.\ or (v) holds.  Finally, because $L_n^{(j)}/n$ is bounded by $L_n^{(j,M)}/n$ and the latter is $L^r(\P_i)$-uniformly integrable, we also have (vi).

\subsection{Proof of Lemma  \ref{lemma_example_2_lambda}}

Fix $i \in \M$. Similarly to Lemma \ref{lemma_alpha_i},
for any $n \geq 1$ and $(x_1,\ldots, x_n) \in E^n$, we obtain
\begin{align*}
\widetilde{\alpha}_n^{(i)} (x_1,\ldots, x_n) 
&= \nu_i \Big[ \rho_0^{(i)} \prod_{k=1}^n f_i(x_k) + \sum_{k=1}^n
    \rho_k^{(i)} \prod_{l=1}^{k-1} f_i^{(0)}(x_l) \prod_{m=k}^n f_i(x_m) \Big], \\
\sum_{y \in \mathcal{Y}_{0}^{(i)}}\alpha_n (x_1,...,x_n, y) &= \nu_i \big(1- \sum_{t=0}^n \rho_t^{(i)} \big) \prod_{l=1}^n
    f_i^{(0)}(x_l).
\end{align*}
Therefore, for every $j \in \M \backslash \{i\}$, 
\begin{align*}
\Lambda_n(i,j) &= \log \left( \frac {\nu_i \big[ \rho_0^{(i)} \prod_{k=1}^n f_i(x_k) + \sum_{k=1}^n
    \rho_k^{(i)} \prod_{l=1}^{k-1} f_i^{(0)}(x_l) \prod_{m=k}^n f_i (x_m) \big]} {\nu_j \big[ \rho_0^{(j)} \prod_{k=1}^n f_j(x_k) + \sum_{k=1}^n
    \rho_k^{(j)} \prod_{l=1}^{k-1} f_j^{(0)}(x_l) \prod_{m=k}^n f_j (x_m) \big]} \right) \\
&= \log \left( \frac {\nu_i} {\nu_j} \prod_{k=1}^n \frac { f_i(x_k)} { f_j(x_k)} \frac {\exp L_n^{(i)}} {\exp L_n^{(j)}} \right) = \log \frac {\nu_i} {\nu_j} + \sum_{k=1}^n \log \frac { f_i(x_k)} { f_j(x_k)}  + L_n^{(i)} - L_n^{(j)},
\end{align*}
and
\begin{align*}
\Lambda_n(i,j) 
&= \log \left( \frac {\nu_i} {\nu_j} \frac 1 {\rho_n^{(j)}}\prod_{k=1}^n \frac { f_i(x_k)} { f_j^{(0)}(x_k)} \frac {\rho_0^{(i)}  + \sum_{k=1}^n
    \rho_k^{(i)} \prod_{l=1}^{k-1} \frac {f_i^{(0)}(x_l)} {f_i(x_l)} } {\frac {\rho_0^{(j)}} {\rho_n^{(j)}} \prod_{k=1}^n \frac {f_j(x_k)} {f_j^{(0)}(x_k)} + \sum_{k=1}^n
    \frac {\rho_k^{(j)}} {\rho_n^{(j)}} \prod_{m=k}^n \frac {f (j, x_m)} {f_j^{(0)}(x_m)} } \right) \\
&= \log \frac {\nu_i} {\nu_j} - \log \rho_n^{(j)} + \sum_{k=1}^n \log \frac { f_i(x_k)} { f_j^{(0)}(x_k)}  + L_n^{(i)} - K_n^{(j)}.
\end{align*}
On the other hand, for every $j \in \M$,
\begin{align*}
\Lambda_n^{(0)}(i,j) &= \log \left( \frac {\nu_i \big[ \rho_0^{(i)} \prod_{k=1}^n f_i(x_k) + \sum_{k=1}^n
    \rho_k^{(i)} \prod_{l=1}^{k-1} f_i^{(0)}(x_l) \prod_{m=k}^n f_i (x_m) \big]} {\nu_j \big(1- \sum_{t=0}^n \rho_t^{(j)} \big) \prod_{l=1}^n
    f_j^{(0)}(x_l)} \right) \\
 &= \sum_{k=1}^n \log \frac { f_i(x_k)} { f_j^{(0)}(x_k)}  + \log \left( \frac {\nu_i \Big[ \rho_0^{(i)} + \sum_{k=1}^n
    \rho_k^{(i)} \prod_{l=1}^{k-1} \frac {f_i(x_l)} {f_j^{(0)}(x_l)}  \Big]} {\nu_j \big(1- \sum_{t=0}^n \rho_t^{(j)} \big) } \right) \\
&= \log \frac {\nu_i} {\nu_j} + \sum_{k=1}^n \log \frac { f_i(x_k)} { f_j^{(0)}(x_k)}  + L_n^{(i)} -  \log \big( 1- \sum_{t=0}^n \rho_t^{(j)}  \big).
\end{align*}

\bibliographystyle{abbrvnat}

	\bibliography{thesis} 
\end{document}